\newcommand{\cala}{\mathcal{A}}
\newcommand{\calg}{\mathcal{G}}
\newcommand{\call}{\mathcal{L}}
\newcommand{\calgSW}{{}_{SW}\calg}
\newcommand{\calgNE}{\calg^{N\!E}}
\newtheorem{thm}{Theorem}[section]
\newtheorem{prop}[thm]{Proposition}
\newtheorem{lem}[thm]{Lemma}
\newtheorem{cor}[thm]{Corollary}
\theoremstyle{defn}
\newtheorem{definition}[thm]{Definition}
\newtheorem{example}[thm]{Example}
\theoremstyle{remark}
\newtheorem{remark}[thm]{Remark}
\numberwithin{equation}{section}
  \def\sgn{\operatorname{sgn}}
\newcommand{\za}{\alpha}
\newcommand{\zb}{\beta}
\newcommand{\zd}{\delta}
\newcommand{\ze}{\epsilon}
\newcommand{\zs}{\sigma}
\renewcommand{\sgn}{\mathop{sgn}}
\newcommand{\type}{\mathop{type}}
\newcommand{\cfa}{[a_1,a_2,\ldots,a_n]}
\newcommand{\cfb}{[b_1,b_2,\ldots,b_m]}
\newcommand{\Fa}{F_{a_1,a_2,\ldots,a_n}}
\newcommand{\Fb}{F_{b_1,b_2,\ldots,b_m}}
\begin{document}
\title{Cluster algebras and Jones polynomials}
\subjclass[2000]{Primary: 13F60, 
Secondary: 57M27,  	
11A55
}
\keywords{Cluster algebras, Jones polynomial, 2-bridge knots, continued fractions, snake graphs}
\author{Kyungyong Lee}
\address{Department of Mathematics, 
University of Nebraska-Lincoln,
Lincoln NE 68588-0130,
USA}
\email{klee24@unl.edu}
\author{Ralf Schiffler}\thanks{The first author was supported by the University of Nebraska--Lincoln, Korea Institute for Advanced Study, and NSA grant H98230-16-1-0059. The second author was supported by the NSF-CAREER grant  DMS-1254567, and by the University of Connecticut.}
\address{Department of Mathematics, University of Connecticut, 
Storrs, CT 06269-3009, USA}
\email{schiffler@math.uconn.edu}

%
%
%
\begin{abstract} We present a new and very concrete connection between cluster algebras and knot theory. This connection is being made via  continued fractions and snake graphs.

 It is known that the class of 2-bridge knots and links is parametrized by continued fractions, and it has recently been shown that one can associate to each continued fraction a snake graph, and hence a cluster variable in a cluster algebra. We show that up to normalization by the leading term the Jones polynomial of the 2-bridge link is equal to the specialization of this cluster variable obtained by setting all initial cluster variables to 1 and specializing the initial principal coefficients of the cluster algebra as follows $y_1=t^{-2}$ and  $ y_i=-t^{-1}$, for all $i> 1$.
 
As a consequence we obtain a direct formula for the Jones polynomial of a 2-bridge link as the numerator of a continued fraction of Laurent polynomials in $q=-t^{-1}$.
  We also obtain formulas for the degree and the width of the Jones polynomial, as well as for the first three and the last three coefficients.

Along the way, we also develop some basic facts about even continued fractions and construct their snake graphs. We show that the snake graph of an even continued fraction is ismorphic to the snake graph of a positive continued fraction if the continued fractions have the same value. We also give recursive formulas for the Jones polynomials.
\end{abstract}

 \maketitle

\setcounter{tocdepth}{2}
\tableofcontents


\section{Introduction}\label{sect 1}
Cluster algebras were introduced by Fomin and Zelevinsky in 2002 in the context of canonical bases in Lie theory \cite{FZ1}. Since then many people have worked on cluster algebras and the subject has developed into a theory of 
its own with deep connections to a number of research areas including representation theory of associative algebras and Lie algebras, combinatorics, number theory, dynamical systems, hyperbolic geometry, algebraic geometry and string theory.

In this paper, we develop a new connection between cluster algebras and knot theory. The key to this relation is the use of continued fractions in both areas. To every continued fraction, one can associate a knot or a 2-component link which is built up as a sequence of braids each of which corresponding to one entry of the continued fraction. The class of knots (and links) obtained in this way are called {\em 2-bridge knots}. They were first studied by Schubert in 1956 in \cite{Sc} and the relation to continued fractions goes back to Conway in 1970 in \cite{C}.

A significant part of knot theory is concerned with knot invariants, and one of the most important knot invariants is the {\em Jones polynomial}, introduced by Jones in 1984 in \cite{J}. The Jones polynomial of an oriented link is a Laurent polynomial in one variable $t^{\pm\, 1/2}$ with integer coefficients. 

A {\em cluster algebra} is a $\mathbb{Z}$-subalgebra of a field of rational functions in several variables. To define a cluster algebra, one constructs a possibly infinite set of generators called {\em cluster variables}, by a recursive method called mutation.  Each cluster variable is a  polynomial in two types of variables; the initial cluster variables $x_1^{\pm 1},x_2^{\pm 1},\ldots,x_N^{\pm 1}$ and the initial principal coefficients $y_1,y_2,\ldots,y_N$, see \cite{FZ1,FZ4}, and with positive integer coefficients, see \cite{LS4}.
The {\em $F$-polynomial} of the cluster variable is obtained by setting all initial cluster variables equal to 1.

The relation between cluster algebras and continued fractions was established recently by \c{C}anak\c{c}\i{} and the second author in \cite{CS4}. 
They showed that the set of positive continued fractions is in bijection with the set of abstract {\em snake graphs}. These are planar graphs that appeared naturally in the study of cluster algebras from surfaces in \cite{MS} (also \cite{Propp} for the special case of the once-punctured torus).
The bijection is such that the numerator of the continued fraction is equal to the number of perfect matchings of the snake graph. 

In \cite{MS,MSW}, the authors gave a combinatorial formula for the cluster variables for cluster algebras from surfaces, and in \cite{MSW2} this formula was used to construct canonical bases for these cluster algebras in the case where the surface has no punctures. For each cluster variable, the authors construct a  weighted snake graph  from  geometric information provided by the surface. 
The combinatorial formula expresses the cluster variables as a sum over all perfect matchings of that snake graph.

In \cite{CS4}, the authors used their new approach to snake graphs via continued fractions to give another formula for the cluster variables of a cluster algebra of surface type (with trivial coefficients) which expresses the cluster variable as the numerator of a continued fraction of Laurent polynomials. This formula was generalized by Rabideau \cite{R} to include cluster algebras with principal coefficients.

Our main result in this paper is that these cluster variables specialize to the Jones polynomials of 2-bridge links. 

In order to make this statement  precise, we must first ensure that the 2-bridge link as well as its orientation is uniquely determined by the continued fraction. We found that the most natural way to do so is to work with {\em even continued fractions} $\cfb$, where each entry $b_i$ is an even non-zero integer (possibly negative). This is no restriction, since every 2-bridge link is associated to an even continued fraction. Moreover, if $\cfb$ is an even continued fraction and $\cfa$ is a positive continued fraction such that both have the same value $\cfb=p/q=\cfa$, then the associated 2-bridge links are isotopic and the associated snake graphs are isomorphic.

To state our main theorem, recall from above that each cluster variable is a  polynomial in variables $x_1^{\pm 1},x_2^{\pm 1},\ldots,x_N^{\pm 1}, y_1,y_2,\ldots,y_N$, and that the Jones polynomial is a  polynomial in one variable $t^{\pm\, 1/2}$.
Denote by $\Fb$ the specialization of the  cluster variable of the continued fraction $\cfb$ obtained by setting $x_1=x_2=\cdots=x_N=1$, $y_2=y_3=\cdots=y_N=-t^{-1}$ and $y_1=t^{-2}$.
Then our main result is the following.

\begin{thm}
 \label{thm main intro} (Theorem \ref{thm main})
 Let $\cfb$ be an even continued fraction, let $V_{\cfb}$ be the Jones polynomial of the corresponding 2-bridge link, and let $\Fb$ be the specialized cluster variable. Then
\begin{equation}
 V_{\cfb}= \zd \,t^j \ \Fb,
\end{equation}
 \smallskip

\noindent where $\zd=\pm 1$ and $j=  \sum_{i=1}^m  \max\left( (-1)^{i+1} b_i + \frac{\textup{sign}(b_ib_{i-1})}{2}\ ,\ -\frac{1}{2}\right).$
\end{thm}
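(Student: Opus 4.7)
The strategy is to prove the identity by induction on the length $m$ of the even continued fraction $\cfb$, showing that both the Jones polynomial $V_{\cfb}$ and the specialized cluster variable $\Fb$ obey parallel two-term recursions indexed by the entries $b_i$, so that they differ only by the monomial factor $\zd\, t^j$ accumulated one entry at a time. The base case $m=1$ corresponds to a $(2,b_1)$-torus link, whose Jones polynomial has a direct closed form; on the cluster side, the snake graph of $[b_1]$ is a straight strip of $|b_1|-1$ tiles and Rabideau's formula reduces $F_{[b_1]}$ to the numerator of a very short continued fraction of Laurent polynomials, which I expect to compute in closed form and match against $V_{[b_1]}$ after multiplying by $\zd\, t^{j}$ for $j=\max(b_1-\tfrac12,-\tfrac12)$.

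For the inductive step I would use two ingredients in parallel. On the cluster side, Rabideau's formula expresses the principal-coefficient cluster variable as the numerator of a continued fraction $[L_1,L_2,\ldots,L_N]$ of explicit Laurent monomials in the $x_i$ and $y_i$; specializing $x_i=1$, $y_1=t^{-2}$ and $y_i=-t^{-1}$ ($i>1$) turns this into a continued fraction of Laurent polynomials in $t$, and the standard three-term recursion $N_k=L_k N_{k-1}+N_{k-2}$ for numerators of continued fractions translates into a recursion for $\Fb$ as $b_m$ varies. On the knot-theoretic side, drawing the standard 2-bridge diagram of $\cfb$ and applying the Kauffman bracket at the tangle corresponding to the last twist region $b_m$ yields a two-term linear recursion expressing $V_\cfb$ in terms of the Jones polynomials of the 2-bridge links associated to $[b_1,\ldots,b_{m-1}]$ and $[b_1,\ldots,b_{m-2}]$ with coefficients that are monomials in $t$ depending on $b_m$; these recursions appear already in the classical computation of the Jones polynomial of rational tangles. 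The core of the proof is then to show that, after absorbing the inductively assumed factor $\zd\, t^j$, the Kauffman recursion matches, entry by entry, the continued fraction recursion of $\Fb$.

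The passage between even and positive continued fractions is handled at the start: the result cited from \cite{CS4} that the snake graph of an even continued fraction is isomorphic to that of the positive continued fraction with the same value lets me apply Rabideau's formula without change, while the isotopy class of the 2-bridge link is determined by the value $p/q$, so $V_\cfb$ depends only on $p/q$ as well. This means the combinatorial matching can be carried out in whichever presentation is more convenient at each step; in particular, the parity of the entries $b_i$ of the even continued fraction controls the signs that enter the exponent $j$ and the global sign $\zd$.

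The main obstacle I anticipate is bookkeeping the monomial factor $\zd\, t^j$ with its definition involving the maximum and the signs $\sgn(b_i b_{i-1})$. Each inductive step contributes a factor that depends on the sign pattern of $b_{m-1}$ and $b_m$ (because the writhe of the standard diagram, and therefore the Kauffman-to-Jones normalization, shifts discontinuously when consecutive twist regions have opposite signs), and the max in the formula reflects a choice between two regimes of the Kauffman recursion coefficients. I expect to spell out these shifts in a small case analysis and verify that the cumulative exponent agrees with $\sum_i \max\!\left((-1)^{i+1}b_i+\tfrac{\sgn(b_i b_{i-1})}{2},-\tfrac12\right)$. Once this monomial matching is in place, the algebraic identity follows immediately from comparing the two recursions and the base case.
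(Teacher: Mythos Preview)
Your overall strategy---induction on $m$, matching a two-term recursion for $V_{\cfb}$ against a two-term recursion for $\Fb$, with a case analysis on sign patterns to track the monomial factor---is exactly the paper's approach. The base case and the Jones side of the recursion are also essentially as in the paper (the paper uses the Jones skein relation directly rather than the Kauffman bracket, but this is cosmetic).

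There is, however, a real gap in your plan on the cluster side. You write that ``the standard three-term recursion $N_k=L_k N_{k-1}+N_{k-2}$ for numerators of continued fractions translates into a recursion for $\Fb$ as $b_m$ varies.'' But Rabideau's formula is a continued fraction indexed by the entries $a_1,\ldots,a_n$ of the \emph{positive} continued fraction, so its numerator recursion is a recursion in $n$, not in $m$. Truncating the even continued fraction $[b_1,\ldots,b_m]$ to $[b_1,\ldots,b_{m-1}]$ or $[b_1,\ldots,b_{m-2}]$ does not correspond to dropping one or two entries of the positive expansion; the relation between $m$ and $n$ depends on the sign pattern of the $b_i$. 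So the ``standard'' recursion does not immediately give what you need.

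The paper supplies this missing ingredient as a separate lemma (Lemma~\ref{lem 6} and its specialization Corollary~\ref{cor 8bis}): a recursion expressing $F[b_1,\ldots,b_m]$ in terms of $F[b_1,\ldots,b_{m-1}]$ and $F[b_1,\ldots,b_{m-2}]$, with coefficients depending on the last two entries of the type sequence. This recursion is \emph{not} a consequence of the continued-fraction numerator recursion; it is proved using snake graph calculus (the grafting and self-crossing identities of \cite{CS,CS2}), together with an analysis of when the minimal matching of $\calg\cfb$ restricts to the last zigzag piece $\calg[b_m]$. Once this lemma is in hand, the paper finishes exactly as you outline: a six-case comparison (indexed by the tail of the type sequence) between the Jones recursion of Theorem~\ref{thm 5} and the $F$-recursion of Corollary~\ref{cor 8bis}, using the degree and sign bookkeeping of Corollaries~\ref{cor 7} and~\ref{cor 8}. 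Your anticipated ``small case analysis'' is therefore right in spirit, but you should expect it to rest on this snake-graph lemma rather than on Rabideau's recursion alone.
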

 We also give a simple formula for the sign $\zd$. Thus the Jones polynomial is completely determined by the cluster variable, and hence this theorem establishes a direct connection between knot theory on the one hand and cluster algebras on the other. This also shows that  the Jones polynomial of a 2-bridge knot  is an alternating sum, which is not true for arbitrary knots. Another direct consequence of the theorem is that the difference between the highest and the lowest degree in the Jones polynomial is equal to   the sum of the entries in the positive continued fraction $a_1+a_2+\cdots+a_n$.

We also obtain the following direct formula for the Jones polynomial in terms of the continued fraction. In the statement, we use  the notation  $[a]_q=1+q+q^2+\ldots+q^{a-1}$ for the $q$-analogue of the positive integer $a$,  and we let $q=-t^{-1}$, and $\ell_i=a_1+a_2+\cdots a_i$.

\begin{thm}(Theorem \ref{thm main2}) Let $\cfa$ be a positive continued fraction. Then up to normalization by its leading term, $V_{\cfa}$ is equal to the numerator of the following continued fraction 

\begin{itemize}
\item [{\rm(a)}] If $n$ is odd, 
 \[\Big[\, [a_1+1]_q -q\ ,\  [a_2]_q \,q^{-\ell_2} ,\ [a_3]_q\,q^{\ell_2+1} ,\ldots, [a_{2i}]_q\, q^{-\ell_{2i}},\ [a_{2i+1}]_q\, q^{\ell_{2i}+1} ,\ldots , \ [a_{n}]_q\, q^{\ell_{n-1}+1}\Big]. \]
 
\item [{\rm(b)}] If $n$ is even, 
\[q^{\ell_n}\,\Big[\, [a_1+1]_q -q\ ,\  [a_2]_q \,q^{-\ell_2} ,\ [a_3]_q\,q^{\ell_2+1} ,\ldots, [a_{2i}]_q\, q^{-\ell_{2i}},\ [a_{2i+1}]_q\, q^{\ell_{2i}+1} ,\ldots , \ [a_{n}]_q\, q^{-\ell_{n}}\Big]. \]
\end{itemize}
\end{thm}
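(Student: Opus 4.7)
The plan is to deduce Theorem (b) from the main theorem (Theorem A) by composing two ingredients: first, Rabideau's formula that expresses a cluster variable with principal coefficients as the numerator of a continued fraction of Laurent polynomials in $x_i^{\pm 1}$ and $y_i$; second, the isomorphism of snake graphs between an even continued fraction $\cfb$ and the corresponding positive continued fraction $\cfa$ with $\cfb=\cfa$. Together these let me write $\Fb$ as the numerator of an explicit continued fraction of Laurent polynomials in $q=-t^{-1}$, after which Theorem A produces $V_{\cfa}$ up to the monomial normalization $\zd t^j$.

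First, I would take the even continued fraction $\cfb$ associated to the same 2-bridge link as $\cfa$, apply Rabideau's formula to the cluster variable attached to its snake graph, and then perform the specialization $x_i=1$, $y_1=t^{-2}$, $y_i=-t^{-1}$ ($i>1$). The calculation consists of writing out the Laurent polynomial entries of the continued fraction in terms of the $b_i$, collecting the principal-coefficient contributions that accumulate along the snake graph, and verifying that each entry collapses to a $q$-integer $[\,\cdot\,]_q$ times a monomial in $q$. Because each $b_i$ is even and the $y$-specialization contains a sign, I expect a cancellation pattern that gathers successive negative signs into a single $q$-integer, precisely of the form $[a_i]_q$ once the even entries are translated back to the positive ones.

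Next, I would use the isomorphism of snake graphs to transport the continued-fraction expression from $\cfb$ to $\cfa$. The relation between an even continued fraction and the positive one with the same value is explicit (it is the classical algorithm converting between even and positive expansions), and under this translation each block of $|b_i|/2$ signed entries corresponds to one positive entry $a_j$; this is why $q$-integers $[a_j]_q$ appear rather than $[b_i]_q$. The running exponents $\ell_{i}=a_1+\cdots+a_i$ arise naturally as the cumulative powers of $q$ produced by the sign specialization $y_i=-t^{-1}=q$, while the odd/even alternation of the exponent signs in the statement reflects the alternation of "turning directions" along the snake graph. The modification of the first entry from $[a_1]_q$ to $[a_1+1]_q-q$ comes from the special role of $y_1=t^{-2}=q^2$ instead of $q$, which shifts one term at the beginning; I would verify this by a direct calculation of the first entry of Rabideau's continued fraction.

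Finally, I would combine everything with Theorem A. The factor $\zd t^j$ is a monomial, so dividing $V_{\cfb}$ by its leading term exactly washes it out and yields the numerator of the continued fraction displayed in the theorem. The two cases (a) $n$ odd and (b) $n$ even correspond to whether the last positive entry $a_n$ sits at an even- or odd-indexed position in the alternation, which is why the last entry appears as $[a_n]_q q^{\ell_{n-1}+1}$ in one case and $[a_n]_q q^{-\ell_n}$ with an overall prefactor $q^{\ell_n}$ in the other; the prefactor $q^{\ell_n}$ in (b) is needed so that, after normalization by the leading term, one recovers the same polynomial. The main obstacle I anticipate is the bookkeeping: tracking the exact power of $q$ that each block of even entries contributes, reconciling it with the passage from even to positive continued fractions, and handling the two parity cases in a uniform way. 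Once the first entry is treated carefully and the alternation of $q^{\pm \ell_i}$ is established by induction on the length of the continued fraction, the rest follows from a direct comparison with Rabideau's formula and Theorem A.
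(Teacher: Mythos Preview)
Your proposal correctly identifies the two ingredients the paper uses---Rabideau's continued-fraction formula for the $F$-polynomial and Theorem~\ref{thm main}---but you route them through the even continued fraction $\cfb$ unnecessarily. Rabideau's formula is already stated for the \emph{positive} continued fraction $\cfa$, and the paper simply specializes it there (this is Proposition~\ref{prop F}, proved earlier): substituting $y_1\mapsto q^2$ and $y_i\mapsto q$ for $i>1$ into Rabideau's $\call_i=C_i\varphi_i$ yields directly the entries $[a_1+1]_q-q$ and $[a_i]_q\,q^{\pm\ell_{\bullet}}$ displayed in the theorem, with no passage through the $b_i$ and no translation step. The paper's proof of Theorem~\ref{thm main2} is then a single sentence: combine Proposition~\ref{prop F} with Theorem~\ref{thm main}. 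Your detour through $\cfb$ and back is what creates the bookkeeping obstacle you anticipate; that obstacle does not arise if you apply Rabideau to $\cfa$ from the start.
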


This result is very useful for computations. We give several examples, the biggest being the Jones polynomial of a link with 20 crossings, corresponding to the continued fraction $[2,3,4,5,6]$.
The theorem also allows us to obtain direct formulas for the first 3 and the last 3 co coefficients of the Jones polynomials in Theorem \ref{thm coeff}.

As mentioned above, a main ingredient in the proofs is the use of even continued fractions. These were used in knot theory previously, see for example \cite{KM}, but not in cluster algebras. We also did not find an algebraic treatment of the topic in the literature, so we include the basic algebraic foundation of even continued fractions in section \ref{sect cf}, and we apply it to the study of snake graphs in section \ref{sect sg}. Our main result in section \ref{sect sg} is the following.
\begin{thm}(Theorem \ref{thm 1})
Given  a positive continued fraction $\cfa$ and an even continued fraction $\cfb$ such that both have the same value $p/q$, then the corresponding snake graphs are isomorphic.
  \end{thm}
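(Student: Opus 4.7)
The plan is to prove the theorem by exhibiting an explicit reduction procedure that turns the even continued fraction $\cfb$ into the positive continued fraction $\cfa$ of the same value via a sequence of local moves, and to show that each such local move preserves the snake graph up to isomorphism.

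First, I would recall the construction from \cite{CS4} of the snake graph associated to a positive continued fraction $\cfa$, namely the tile-by-tile procedure in which each entry $a_i$ contributes $a_i$ consecutive tiles glued in a straight strip, with successive strips joined at a corner tile whose type (NE or SW) alternates with $i$. I would then formalise the analogous construction for even continued fractions developed in section \ref{sect sg}: positive entries $b_i > 0$ add $b_i$ tiles continuing in the current direction, while negative entries $b_i < 0$ add $|b_i|$ tiles continuing in the opposite direction. This gives a well-defined snake graph $\calg_{\cfb}$ for every even continued fraction.

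Second, I would isolate a short list of elementary transformations on continued fractions that preserve the rational value $p/q$. The most important is the identity
\[
[\ldots, b_i, -b_{i+1}, \ldots] \;=\; [\ldots, b_i - 1, 1, b_{i+1} - 1, \ldots],
\]
together with its variants that absorb or produce entries equal to $\pm 1$, and the standard positive-side identity $[\ldots, a_n] = [\ldots, a_n - 1, 1]$. For each such move I would verify, directly from the tile-by-tile construction, that the snake graph built from the left-hand side is isomorphic to the one built from the right-hand side: geometrically, breaking a sign change amounts to re-labelling the corner tile that joins the two strips, and inserting or deleting a unit entry corresponds to choosing whether that corner tile belongs to the previous or the next strip. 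Because $a_i$-values and the number of tiles on each side are controlled by section \ref{sect cf}, this reduces to a finite case analysis on the local sign pattern.

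Third, I would argue using the results of section \ref{sect cf} that these elementary moves are sufficient to reduce any even continued fraction to the unique positive continued fraction of the same value. This is essentially a Euclidean-style algorithm adapted to the even setting; a suitable monovariant (for example $\sum_i |b_i|$ or the length $m$ combined with the number of negative entries) decreases strictly under the moves, guaranteeing termination. Combined with the snake-graph invariance from the previous step, this yields $\calg_{\cfb} \cong \calg_{\cfa}$.

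The main obstacle will be the case analysis in the second step. The chief subtlety is that a local move at position $i$ interacts with the corner tiles joining the $(i-1)$-th strip to the $i$-th strip and the $i$-th to the $(i+1)$-th, and these corners can be of different types depending on parity and signs. Careful treatment of the endpoints $i = 1$ and $i = m$, and of the situation where two consecutive entries both have negative sign, will require separate verification. Once these boundary cases are handled, the inductive reduction described above closes the argument.
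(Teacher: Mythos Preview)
Your approach is viable in principle but different from the paper's, and your description of the snake-graph construction is inaccurate. In this paper (section~\ref{sect sgb}), each $a_i$ contributes a \emph{zigzag} piece $\calg[a_i]$ with $a_i-1$ tiles (not a straight strip of $a_i$ tiles), and in the even case consecutive pieces are either glued along an edge or joined by an extra connecting tile depending on whether $b_i,b_{i+1}$ have opposite or equal signs. Your local move $[\ldots,b_i,-b_{i+1},\ldots]=[\ldots,b_i-1,1,b_{i+1}-1,\ldots]$ is essentially Lemma~\ref{lem extra}(b) in the two-term case, so the invariance you need is already there; what remains is checking that this move, performed in the interior of a longer continued fraction, does not disturb the gluings on either side, together with a clean termination argument (note that your length $m$ \emph{increases} under the move, so your monovariant must be chosen carefully).

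The paper avoids the rewriting-system bookkeeping entirely by a short induction on $n$: Proposition~\ref{prop 1} pins down $[b_2,\ldots,b_m]$ as either $q/r$ or $-q/(q-r)$ depending on the parity of $a_1$, so by induction $\calg[a_2,\ldots,a_n]\cong\calg[b_2,\ldots,b_m]$, and then one only has to compare how $\calg[a_1]$ and $\calg[b_1]$ are attached---a single case split (if $a_1$ is even, $b_1=a_1$ and the gluings coincide; if $a_1$ is odd, $b_1=a_1+1$ and the extra tile in $\calg[b_1]$ is exactly compensated by the direct gluing forced by $b_1b_2<0$). This is shorter and cleaner than your reduction procedure, though your approach would have the advantage of making the correspondence explicit as a sequence of local moves.
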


We also study the Jones polynomials of 2-bridge links and develop recursive formulas which we need to prove our main results.

Let us point out that  Hikami and Inoue have obtained a realization of knots in a cluster algebra of a disc with several punctures  using the braid presentation of the knot in \cite{HI,HI2}. Their approach as well as their results are very different from ours. Let us also mention the work of Shende, Treumann and Williams, who relate cluster algebras to Legendrian knots \cite{STW}. Also their work is very different from ours.

The paper is organized as follows. In section \ref{sect cf}, we review positive continued fractions and develop basic results for even continued fractions. We introduce snake graphs of even continued fractions in section \ref{sect sg} and compare them to snake graphs of positive continued fractions. Section \ref{sect knots} contains a short overview of 2-bridge links. We also fix conventions for the links and their orientations in that section. In section \ref{sect Jones}, we define the Jones polynomial and develop recursive formulas for the Jones polynomial of 2-bridge links from the definition. We also compute the degree of the Jones polynomial  and the sign of the leading coefficient. Section \ref{sect F} contains a brief review of cluster algebras, especially the formula for the (specialized) cluster variables in terms of continued fractions. We also prove recursive formulas for the $F$-polynomials in this section.
Our main results are stated and proved in section \ref{sect main}. This section also contains several examples. 

We would like to thank Michael Shapiro for helpful comments.

\section{Positive continued fractions vs even continued fractions}\label{sect cf} In this section, we list a few results on continued fractions that we will need later. 
For a standard introduction, we refer to  \cite[Chapter 10] {HW}. For a more extensive treatment of continued fractions see \cite{Perron}. For the  relation between continued fractions and cluster algebras see \cite{CS4}.

In this paper a \emph{continued fraction} is an expression of the form
\[[a_1,a_2,\ldots,a_n]= a_1+\cfrac{1}{a_2+\cfrac{1}{a_3+\cfrac{1}{\ddots +\cfrac{1}{a_n}}}}\]
where the $a_i$ are  integers (unless stated otherwise) and  $a_n\ne 0$. We will be mainly concerned with the following two types of continued fractions. A continued fraction is called \emph{positive} if each $a_i$ is a positive integer, and 
it is called \emph{even} if each $a_i$ is a nonzero even integer. 

For example, the rational number $\frac{27}{10}$ can be represented as a positive continued fraction as well as an even continued fraction as follows.
\[ \frac{27}{10}=[2,1,2,3]=[2,2,-2,4].\]
Indeed, the first expression is obtained by the Euclidean division algorithm
\[
\begin{array}{rcl}
 27&=&2\cdot 10 +7 \\
 10&=&1\cdot 7 +3 \\ 
7&=&2\cdot 3 +1 \\
3&=&3\cdot 1 ,\\
\end{array}\]
and the second expression is obtained by the following division algorithm
\[
\begin{array}{rcl}
 27&=&2\cdot 10 +7 \\
 10&=&2\cdot 7 +(-4) \\ 
7&=&(-2) \cdot (-4) + (-1) \\
-4&=&4\cdot (-1). \\
\end{array}\]

We often use the notation $N\cfa$ for the numerator of the continued fraction. Thus $N[2,1,2,3]=27$.

\begin{lem}\label{lem 0}
 Let $p>q>0$ be integers. \begin{itemize}
\item [\rm{(a)}] There exist unique integers $a,r$ such that $p=aq+r$, with $a>0$ and $0\le r<q$.  
\item [\rm{(b)}] There exist unique integers $b,s$ such that $p=bq+s$, with $b\in 2\mathbb{Z}\setminus\{0\}$ and $-q\le s<q$.
\end{itemize}
Moreover, if $pq$ is even then so is $qs$.
\end{lem}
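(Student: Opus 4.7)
The plan is to treat part (a) as a brief reminder of the standard Euclidean division, and then obtain part (b) by ``adjusting'' the quotient from part (a) by at most one unit to force it to be even, paying for this adjustment with a sign change in the remainder. After that, the final parity claim is a one-line verification.

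For part (a), I would simply invoke the Euclidean algorithm to get $p = aq + r$ with $0 \le r < q$, and note that the hypothesis $p > q > 0$ forces $a \ge 1$, which gives the required strict positivity. Uniqueness is standard: two representations would yield an integer multiple of $q$ with absolute value less than $q$.

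For part (b), existence splits into two cases based on the parity of the $a$ produced by part (a). If $a$ is even then $a \ge 2$ is already a nonzero even integer, and I take $b = a$, $s = r$, which lies in $[0, q) \subset [-q, q)$. If $a$ is odd I set $b = a+1$ (still $\ge 2$, hence a nonzero even integer) and $s = r - q$, which lies in $[-q, 0) \subset [-q, q)$; then $bq + s = (a+1)q + (r-q) = aq + r = p$. For uniqueness, suppose $p = b_1 q + s_1 = b_2 q + s_2$ with both $b_i$ nonzero even and both $s_i \in [-q, q)$. Then $(b_1 - b_2) q = s_2 - s_1$, whose right-hand side has absolute value strictly less than $2q$, so $|b_1 - b_2| < 2$. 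But $b_1 - b_2$ is even, hence $b_1 = b_2$ and consequently $s_1 = s_2$.

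For the final assertion, assume $pq$ is even and observe that $qs$ is trivially even whenever $q$ is even, so the only case to handle is $q$ odd, which forces $p$ even. In that case the identity $s = p - bq$ exhibits $s$ as the difference of two even integers (recall $b$ is even), hence $s$ is even and so is $qs$.

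The proof is essentially mechanical; the only real ``choice'' is how to package the case analysis for existence in (b), and the main thing to be careful about is verifying the endpoint $s = -q$ is excluded (which is automatic from $0 \le r < q$ giving $-q \le r - q < 0$) so that $s$ genuinely lies in $[-q, q)$ and the representation is unique.
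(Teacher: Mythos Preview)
Your proof is correct and follows essentially the same route as the paper's: deduce part (b) from part (a) by adjusting $a$ to $a+1$ when $a$ is odd, and prove uniqueness via $|b_1-b_2|\,q = |s_2-s_1| < 2q$ combined with the evenness of $b_1-b_2$. Two small remarks: your final paragraph's claim that ``the endpoint $s=-q$ is excluded'' is not right (and not needed), since $-q \le s$ is permitted by the statement and your own parenthetical correctly yields $-q \le r-q < 0$; and you actually supply a proof of the ``Moreover'' parity claim, which the paper's printed proof omits.
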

\begin{proof}
 Part (a) is the Euclidean division algorithm in $\mathbb{Z}$. To prove (b), we let 
 $b=a$ and $s=r$ if $a $ is even, and we let $b=a+1$ and $s=r-q$ if $a$ is odd.
 Then $p=bq+s$ and the inequalities $0\le r<q$ imply the inequalities $-q\le s<q$.
 To prove uniqueness, suppose we have another pair $b',s'$ satisfying the required property. Then, since $bq+s=b'q+s'$, we have $|b-b'| q=|s'-s| < 2q$, and since $b-b'$ is even, it follows that $b=b'$ and $s=s'$.
\end{proof}
\begin{example}\label{ex 1} The even continued fraction of
 $p/(p-1)$ is $[2,-2,2,-2,\ldots, \pm 2]$ with a total of $m=p-1$ coefficients. 
 Thus $2/1=[2],3/2=[2,-2], 4/3=[2,-2,2], 5/4=[2,-2,2,-2]$.
\end{example}
\begin{prop}
 \label{prop 01}
 Let $p,q$ be relatively prime integers with $p>q>0$. Then
\begin{itemize}
\item [\rm{(a)}] $p/q$ has a positive continued fraction expansion $p/q=\cfa$ which is unique up to replacing the coefficient $a_n$ by the two coefficients $a_n-1,1$. 
\item [\rm{(b)}] if $p $ or $q$ is even, then $p/q$ has a unique even continued fraction expansion.
\item [\rm{(c)}]
 if $p$ and $q$ are both odd then $p/q$ does not have an even continued fraction expansion.
\end{itemize}
\end{prop}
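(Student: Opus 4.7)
The strategy for all three parts is to iterate Lemma \ref{lem 0}, so that the continued fraction expansion is read off from a sequence of divisions.

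Part (a) is the classical Euclidean algorithm: iterating Lemma \ref{lem 0}(a) on $(p,q)$ produces positive integers $a_1, a_2, \ldots$ until the remainder becomes zero, giving $p/q = [a_1, \ldots, a_n]$, and the uniqueness statement in Lemma \ref{lem 0}(a) at each step yields uniqueness of the expansion up to the single ambiguity $[a_1, \ldots, a_{n-1}, 1] = [a_1, \ldots, a_{n-1}+1]$ at the final step.

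For part (b) I would iterate Lemma \ref{lem 0}(b): applied to $p > q > 0$ with $\gcd(p,q)=1$ and one of $p,q$ even, it produces $p = b_1 q + s_1$ with $b_1$ even nonzero and $-q \leq s_1 < q$, so that $p/q = b_1 + 1/(q/s_1)$, and I then recurse on $q/s_1$. Three points need to be checked. First, $|s_1| < q$ strictly: the boundary case $s_1 = -q$ would force $q \mid p$ and hence $q = 1$ by coprimality, making $p = b_1 - 1$ odd, contradicting the evenness hypothesis. Second, the final clause of Lemma \ref{lem 0}(b) gives exactly the parity propagation needed, so the evenness hypothesis passes to the pair $(q, |s_1|)$. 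Third, when $s_1 < 0$ the fraction $q/s_1$ has negative denominator, so one must sign-normalize and absorb the sign into the next even coefficient before re-applying Lemma \ref{lem 0}(b); this bookkeeping is where I expect the main technical work to lie, but it is routine. Since $|s_1| < q$ strictly, the denominator decreases at each step, so the algorithm terminates and produces the even continued fraction expansion. Uniqueness follows by induction on $q$ using the uniqueness statement of Lemma \ref{lem 0}(b) to pin down $b_1$ and $s_1$ at each step.

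For part (c) I would use the standard convergent recursion. Define $p_0 = 1$, $p_1 = b_1$, $q_0 = 0$, $q_1 = 1$ and $p_k = b_k p_{k-1} + p_{k-2}$, $q_k = b_k q_{k-1} + q_{k-2}$, so that $[b_1,\ldots,b_m] = p_m/q_m$ with $\gcd(p_m, q_m) = 1$ (from the standard identity $p_k q_{k-1} - p_{k-1} q_k = \pm 1$). If every $b_k$ is even, then modulo $2$ the recursion reduces to $p_k \equiv p_{k-2}$ and $q_k \equiv q_{k-2}$, which combined with $(p_0, q_0) = (1,0)$ and $(p_1, q_1) \equiv (0,1) \pmod 2$ yields $p_k + q_k \equiv 1 \pmod 2$ for all $k \geq 0$. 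Hence exactly one of the numerator and denominator of $p/q$ in lowest terms is even, so $p$ and $q$ cannot both be odd.
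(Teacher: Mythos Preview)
Your proof is correct and follows essentially the same approach as the paper: both iterate Lemma~\ref{lem 0} for parts (a) and (b), and both reduce the convergent recursion modulo $2$ for part (c) (the paper phrases (c) as a direct induction on $m$, but the content is identical). Your treatment of (b) is in fact slightly more careful than the paper's, since you explicitly flag the sign-normalization needed when $s_1<0$ and the exclusion of the boundary case $s_1=-q$, both of which the paper glosses over.
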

\begin{proof}
 Part (a) is well known, see for example \cite[Theorem 162]{HW}.  We prove part (b) by induction on $p$. If $p=2$ then $q=1$ and $p/q=2=[2]$. To show that this expansion is unique, suppose that $2=\cfb$ with $m>1$. Then $2=b_1+1/[b_2,\ldots,b_m]$, which implies that $-1\le 2-b_1\le 1$. Since $b_1 $ is even, we conclude $b_1=2$ and thus $1/[b_2,\ldots,b_m]=0$ which is impossible.

 
 Now suppose that $p\ge 2$. Lemma  \ref{lem 0} 
 implies the existence of unique $b_1, s$ with $p=b_1 q+s$ with $b_1$ even and $-q\le s<q$, and $qs$ is even. By induction, we may assume that  $q/s$ has a unique even continued fraction expansion $q/s=[b_2,\ldots,b_m]$. Thus $p/q=b_1+s/q=\cfb$.
To show uniqueness, suppose $p/q=[b_1',b_2',\ldots,b_m']=b_1'+ 1/[b_2',\ldots,b_m']$. Then
$p=b_1'q + q/[b_2',\ldots,b_m']$ with $-q< q/[b_2',\ldots,b_m'] < q$, and then the uniqueness in Lemma \ref{lem 0} implies that $b_1'=b_1$ and $q/[b_2',\ldots,b_m']= s$.  Now the statement follows from the uniqueness of the expansion $q/s=[b_2,\ldots,b_m]$.

To show part (c), we prove by induction that if $\cfb=p/q$ with $b_i $ even, then one of $p$ or $q$ is even. Clearly $[b_1]=b_1$ is even.  For the induction step, suppose that $p/q=\cfb=b_1+1/[b_2,\ldots, b_m]$, and let $[b_2,\ldots, b_m]=q/r$. Then one of  $q$ or $r$ is even, by induction. If $q$ is even we are done, and if $r$ is even then $p/q=b_1+r/q$ implies that $p$ is even.
\end{proof}

\begin{prop}
 \label{prop 1}
 Let $p>q>1$ be relatively prime integers such that $p$ or $q$ is even. Denote by $p/q=\cfa$ a positive continued fraction expansion and by $p/q=\cfb$ the even continued fraction expansion. Let $q/r=[a_2,\ldots,a_n]$. Then 
 \[ [b_2,\ldots,b_m] = \left\{\begin{array}{ll} 
 q/r &\textup{if $a_1 $ is even;}\\
 -q/(q-r) &\textup{if $a_1 $ is odd.}
 \end{array}\right.\]
\end{prop}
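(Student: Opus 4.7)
The plan is to trace through the constructions already hidden inside the proofs of Lemma \ref{lem 0}(b) and Proposition \ref{prop 01}(b), which together produce the even continued fraction expansion from the positive one step by step. The claim is really a book-keeping statement: the very first Euclidean step of the positive algorithm and the first even-division step of Lemma \ref{lem 0}(b) differ in an easily controlled way depending on the parity of $a_1$, and beyond that the tails agree after an invertible transformation.

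First, I would write down the first step of each algorithm: the positive expansion gives $p = a_1 q + r$ with $0 \le r < q$ and $q/r = [a_2,\ldots,a_n]$; by coprimality and $q>1$ one has $r>0$ (otherwise $q\mid p$ forces $q=1$). The even expansion gives $p = b_1 q + s$ with $b_1$ even and $-q \le s < q$, and by the inductive construction in the proof of Proposition \ref{prop 01}(b) we have $q/s = [b_2,\ldots,b_m]$. Next I would invoke the explicit construction from the proof of Lemma \ref{lem 0}(b), which tells us that $(b_1,s) = (a_1,r)$ when $a_1$ is even and $(b_1,s) = (a_1+1,r-q)$ when $a_1$ is odd.

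The two cases then follow immediately. If $a_1$ is even, then $s = r$ and $[b_2,\ldots,b_m] = q/s = q/r$. If $a_1$ is odd, then $s = r - q$, which satisfies $-q < s < 0$, and $[b_2,\ldots,b_m] = q/s = q/(r-q) = -q/(q-r)$, as required.

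The only obstacle worth mentioning is checking that the right-hand side of the $a_1$-odd case really does admit an even continued fraction expansion, so that the inductive step of Proposition \ref{prop 01}(b) was legitimately applied; but this is guaranteed because $q/(q-r)$ has the same parity data as $q/r$ after the $\gcd$-preserving substitution $r \mapsto q-r$, and $\gcd(p,q)=1$ together with $p = a_1 q + r$ forces $\gcd(q,r) = \gcd(q,q-r) = 1$, with the required parity condition that one of $q, q-r$ is even (equivalently one of $q, r$ is even) inherited from the hypothesis on $(p,q)$. Since $a_1 + 1 \ge 2$, the leading even coefficient $b_1$ is also a valid nonzero even integer, so the whole expansion produced is genuinely an even continued fraction.
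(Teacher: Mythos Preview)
Your proof is correct and follows essentially the same route as the paper: both arguments extract the first step of the positive and even division algorithms, invoke the explicit construction from the proof of Lemma~\ref{lem 0}(b) to relate $(b_1,s)$ to $(a_1,r)$ according to the parity of $a_1$, and then read off $[b_2,\ldots,b_m]=q/s$. Your version simply makes explicit some details the paper leaves implicit (why $r>0$, and the parity check ensuring the tail admits an even expansion).
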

\begin{proof} By Lemma \ref{lem 0}, we have $p=a_1q+r=b_1q+s$ and $[b_2,\ldots,b_m]=q/s$.
 If $a_1$ is even, then $a_1=b_1$, $r=s$. If $a_1$ is odd, then 
$b_1=a_1+1$ and $s=r-q$.
\end{proof}

\begin{prop}
 \label{prop 2}
 Let $p/q=\cfb$ with nonzero even integers $b_i$. Then $p$ is odd if and only if $m$ is even. 
\end{prop}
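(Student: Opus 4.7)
The plan is to induct on $m$ using the classical recursion satisfied by the numerators and denominators of convergents. Writing $N_k$ and $D_k$ for the numerator and denominator of the truncation $[b_1,\ldots,b_k]$, the familiar matrix identity
\[
\prod_{i=1}^k \begin{pmatrix} b_i & 1 \\ 1 & 0 \end{pmatrix}
= \begin{pmatrix} N_k & N_{k-1} \\ D_k & D_{k-1} \end{pmatrix}
\]
yields the continuant recursion $N_k = b_k N_{k-1} + N_{k-2}$ and $D_k = b_k D_{k-1} + D_{k-2}$ with base cases $N_0 = 1$, $N_1 = b_1$, $D_0 = 0$, $D_1 = 1$. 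Since this identity is purely algebraic, it holds regardless of whether the entries are positive, negative, or zero, so it applies to even continued fractions without any modification.

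Next I would reduce modulo $2$. Because every $b_i$ is even, the continuant recursion collapses to $N_k \equiv N_{k-2} \pmod{2}$, so the parity of $N_k$ is a function of $k$ mod $2$ alone. Evaluating the base cases, $N_0 = 1$ is odd while $N_1 = b_1$ is even, so an immediate induction gives that $N_m$ is odd precisely when $m$ is even.

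Finally, taking determinants of the same matrix identity yields the standard coprimality relation $N_k D_{k-1} - N_{k-1} D_k = (-1)^k$, which forces $\gcd(N_m, D_m) = 1$. Hence $N_m/D_m$ is already in lowest terms, so the reduced numerator $p$ of $p/q$ agrees with $\pm N_m$ and in particular shares its parity. Combining this with the previous paragraph gives the equivalence ``$p$ odd $\iff m$ even'' that the proposition claims.

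I do not expect any real obstacle; the one thing to verify carefully is simply that the continuant recursion and the determinant identity are formal algebraic identities in the $b_i$, and therefore transfer verbatim from the standard (positive) setting to even continued fractions despite the presence of negative entries.
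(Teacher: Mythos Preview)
Your proof is correct and essentially identical to the paper's: both use the continuant recursion $N_k = b_k N_{k-1} + N_{k-2}$, observe that evenness of $b_k$ forces $N_k \equiv N_{k-2} \pmod 2$, and conclude by induction from the base cases. The only difference is that you are slightly more explicit about why the reduced numerator $p$ coincides with $\pm N_m$ (via the determinant/coprimality identity), a point the paper leaves implicit.
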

\begin{proof}
 If $m=1$ then $p=b_1$ is even. If $m=2$ then $p=b_1b_2+1$ is odd. Now suppose $m>2$. Then the recursion relation for the convergents of the continued fraction \cite[Theorem 149]{HW} gives 
 
\begin{equation}
 \label{def N}
\cfb =b_m N[b_1,b_2,\ldots,b_{m-1}] + N[b_1,\ldots, b_{m-2}]. 
\end{equation}
The first summand on the right hand side is always even since $b_m$ is even. Thus the parity of $N\cfb$ is the same as the parity of $N[b_1,b_2,\ldots b_{m-2}]$, and the result follows by induction.
\end{proof}

\smallskip 

The following definition will be crucial in the whole paper.
\begin{definition}
 \label{defsignb} Let $\cfb$ be an even continued fraction.
 \begin{itemize}
\item[{\rm (a)}]
The \emph{sign sequence} $\sgn\cfb$ of $\cfb$ is  the sequence
\\
{\small \begin{equation*} 
\begin{array}{cccccccc}
  ( \underbrace{ \sgn(b_1),\ldots,\sgn(b_1)},&  \underbrace{ -\sgn(b_2),\ldots,-\sgn(b_2)},& \ldots  &
  \underbrace{(-1)^{m+1}\sgn(b_m),\ldots,(-1)^{m+1}\sgn(b_m))}. \\
| b_1| & |b_2| &\ldots&|b_m| \\
\end{array} 
\end{equation*}
}

\smallskip

\noindent Thus the first $|b_1|$ entries of $\sgn\cfb$ are equal to the sign of $b_1$, the next $|b_2|$ entries are equal to the sign of $-b_2$ and so on.
\\
%

\item[{\rm (b)}]
The  \emph{type sequence $\type\cfb$ of $\cfb$} is the sequence
\[
(\sgn(b_1),-\sgn(b_2),\ldots,(-1)^{m+1}\sgn(b_m)).\]
\end{itemize}
\end{definition}

 For example 
\[\begin{array}
 {rclrrcl}\sgn[2,4,2]&=&(+,+,\,-,-,-,-,\,+,+),&\quad&\type[2,4,2]& =& (+,-,+) 
\\
 \sgn[2,2,-2,4]&=&(+,+,\,-,-,\,-,-,-,-,\,-,-),
 &
&\type[2,2,-2,4]&=&(+,-,-,-).
\end{array}
\]

\section{Snake graphs}\label{sect sg}
In this section, we review the construction of a snake graph from a positive continued fraction, and  we  introduce the new construction of a snake graph from an even continued fraction. We show that the two constructions are compatible in Theorem~\ref{thm 1}.

\subsection{Abstract snake graphs} 
Abstract snake graphs   have been introduced and studied \cite{CS,CS2,CS3} motivated by the snake graphs  appearing in the combinatorial formulas for elements in cluster algebras of surface type  in \cite{Propp,MS,MSW,MSW2}. 
In this section we review the definition. Throughout
we fix the standard orthonormal basis of the plane.

 A {\em tile} $G$ is a square in the plane whose sides are parallel or orthogonal  to the elements in    the  fixed basis. All tiles considered will have the same side length.
 We consider a tile $G$ as  a graph with four vertices and four edges in the obvious way. A {\em snake graph} $\calg$ is a connected planar graph consisting of a finite sequence of tiles $G_1,G_2,\ldots, G_d$ with $d \geq 1,$ such that
$G_i$ and $G_{i+1}$ share exactly one edge $e_i$ and this edge is either the north edge of $G_i$ and the south edge of $G_{i+1}$ or the east edge of $G_i$ and the west edge of $G_{i+1}$,  for each $i=1,\dots,d-1$.
An example is given in Figure \ref{signfigure}.
\begin{figure}
\begin{center}
  {\tiny \scalebox{0.9}{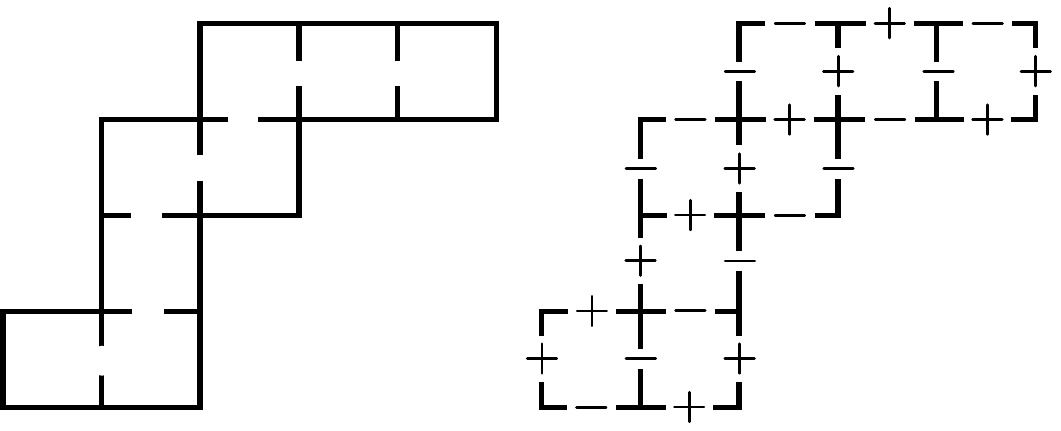}}
 \caption{A snake graph with 8 tiles and 7 interior edges (left);
 a sign function on the same snake graph (right)} 
 \label{signfigure}
\end{center}
\end{figure}
The graph consisting of two vertices and one edge joining them is also considered a snake graph.

The $d-1$ edges $e_1,e_2, \dots, e_{d-1}$ which are contained in two tiles are called {\em interior edges} of $\calg$ and the other edges are called {\em boundary edges.}  
We will always use the natural ordering of the set of interior edges, so that $e_i$ is the edge shared by the tiles $G_i$ and $G_{i+1}$.

We denote by  $\calgSW$ the 2 element set containing the south and the west edge of the first tile of $\calg$ and by $\calgNE$ the 2 element set containing the north and the east edge of the last tile of $\calg$. 
 If $\calg$ is a single edge, we let $\calgSW=\emptyset$ and $\calgNE=\emptyset$. 

A snake graph $\calg$ is called {\em straight} if all its tiles lie in one column or one row, and a snake graph is called {\em zigzag} if no three consecutive tiles are straight.
 We say that two snake graphs are \emph{isomorphic} if they are isomorphic as graphs.

A {\em sign function} $f$ on a snake graph $\calg$ is a map $f$ from the set of edges of $\calg$ to $\{ +,- \}$ such that on every tile in $\calg$ the north and the west edge have the same sign, the south and the east edge have the same sign and the sign on the north edge is opposite to the sign on the south edge. See Figure \ref{signfigure} for an example.
%
%

Note that on every snake graph  there are exactly two sign functions. A snake graph is  determined up to symmetry by its sequence of tiles together with a sign function on its interior edges.

\subsection{Snake graphs from positive continued fractions}\label{sect sgb}
In this section, we recall a construction from \cite{CS4} which associates a snake graph to a positive continued fraction.

Given a snake graph $\calg$ with $d$ tiles, we consider the set of interior edges $\{e_1,e_2,\ldots,e_{d-1}\} $ and we let $e_0$ be  one of the two boundary edges in $\calgSW$, and we let $e_d$ be one of the two boundary edges in $\calgNE$. Let $f$ be a sign function on $\calg$ and consider the {\em sign sequence}  
$(f(e_0),f(e_1),\ldots,f(e_d)) .$ Let $-\ze\in\{\pm\}$ be the first sign $f(e_0)$ in this sequence, and define a continued fraction $[a_1,a_2,\ldots,a_n]$ from the sign sequence as follows 
\begin{equation}
 \label{eqsign1} 
\begin{array}{cccccccc}
  ( \underbrace{ -\ze,\ldots,-\ze},&  \underbrace{ \ze,\ldots,\ze},&  \underbrace{ -\ze,\ldots,-\ze},& \ldots,&  \underbrace{\pm\ze,\ldots,\pm\ze}) . \\
 a_1 & a_2 & a_3&\ldots&a_n
\end{array} 
\end{equation}
Thus $a_1$ is the number of entries before the first sign change, $a_2 $ is the number of entries between the first and the second sign change and so on. See the top row of Figure \ref{match4} for examples.

Conversely, 
to every positive integer $a_i$, we associate the snake graph
$\calg[a_i]$ consisting of $a_i-1$ tiles and $a_i-2$ interior edges all of which have the same sign. Thus $\calg[a_i]$ is a  zigzag snake graph, meaning that no three consecutive tiles lie in one row or in one column.

If $\cfa$ is a positive continued fraction, its snake graph $\calg\cfa$ is defined as the unique snake graph with $d=a_1+a_2+\cdots +a_n-1$ tiles such that the signs of the $d-1$ interior edges form the following sign sequence.
\begin{equation}
 \label{eqsign} 
\begin{array}{cccccccc}
  ( \underbrace{ -\ze,\ldots,-\ze},&  \underbrace{ \ze,\ldots,\ze},&  \underbrace{ -\ze,\ldots,-\ze},& \ldots,&  \underbrace{\pm\ze,\ldots,\pm\ze}) . \\
 a_1-1 & a_2 & a_3&\ldots&a_n-1
\end{array} 
\end{equation}
Furthermore the continued fraction determines a choice of edges $e_0$ in $\calgSW$ and $e_d$ in $\calgNE$ via the sign condition. 

For example, the snake graph of $27/10=[2,1,2,3]$ is shown in Figure \ref{fig2710}.
Note that between any two consecutive zigzag graphs $\calg[a_i]$ and $\calg[a_{i+1}]$ there is exactly one tile that is not part of the zigzag graphs.

\begin{figure}
\begin{center}
  { \scalebox{1}{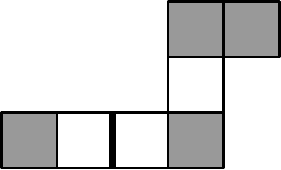}}
 \caption{The snake graph $\calg[2,1,2,3]$. The zigzag subgraphs $\calg[2],\calg[2],\calg[3]$   are shaded and the subgraph $
\calg[1]$ consist of the interior edge shared by  the second and third tile.} 
 \label{fig2710}
\end{center}
\end{figure}

\medskip
Snake graphs were introduced in \cite{MS,MSW} to describe cluster variables in cluster algebra of surface type. The cluster variable is given as a sum over all perfect matchings of the snake graph. Therefore the main interest in the construction of snake graphs from continued fractions stems from the following result.
\begin{thm}
 \cite{CS4}  
 \[\cfa =\frac{m(\calg\cfa)}{m(\calg[a_2,\ldots,a_n])},\] 
 where $m(\calg)$ is the number of perfect matchings of the graph $\calg$. Moreover the fraction on the right hand side is reduced.
 
 In particular, the numerator $N\cfa$ of the continued fraction is the number of perfect matchings of the snake graph $\calg\cfa$.
\end{thm}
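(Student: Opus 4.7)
My plan is to prove the identity by induction on the length $n$ of the continued fraction, showing that $A_n := m(\calg\cfa)$ and $B_n := m(\calg[a_2,\ldots,a_n])$ satisfy the same three-term recurrence as the numerators of the successive convergents of $\cfa$. Reducedness will then follow from a transport of the classical $\gcd$ argument for continuants. The base case $n=1$, with the convention $m(\calg[\,])=1$ for the empty continued fraction, is handled by an elementary induction on $a_1$: the zigzag $\calg[a_1]$ of $a_1-1$ tiles has exactly $a_1$ perfect matchings, matching $[a_1]=a_1$.

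The heart of the argument is establishing the combinatorial identity
\[
m(\calg[a_1,a_2,\ldots,a_n])\;=\;a_1\,m(\calg[a_2,\ldots,a_n])\;+\;m(\calg[a_3,\ldots,a_n]),
\]
which parallels the classical continuant recurrence $N[a_1,\ldots,a_n]=a_1 N[a_2,\ldots,a_n]+N[a_3,\ldots,a_n]$. I would prove it by dissecting perfect matchings of $\calg\cfa$ according to how they behave at the interface between the initial zigzag block $\calg[a_1]$ and the rest of the snake graph. The sign pattern of \eqref{eqsign}, together with the remark that exactly one ``bridge'' tile separates $\calg[a_1]$ from $\calg[a_2]$, singles out a distinguished interior edge at which to cut. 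Matchings that do not cover that edge are obtained by combining any of the $a_1$ matchings of the initial zigzag-plus-bridge portion with an arbitrary matching of the subgraph isomorphic to $\calg[a_2,\ldots,a_n]$, contributing $a_1\,m(\calg[a_2,\ldots,a_n])$; matchings that do cover it force a unique matching on both the zigzag-plus-bridge and the first tile of $\calg[a_2]$, leaving a subgraph isomorphic to $\calg[a_3,\ldots,a_n]$ to be matched freely. This case analysis is the only substantive combinatorial step and is the main obstacle, since one has to verify in each configuration that the residual subgraph is exactly the smaller snake graph demanded by the recursion.

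With the matching recursion in hand, applying the induction hypothesis to the shorter continued fraction $[a_2,\ldots,a_n]$ gives $B_n/m(\calg[a_3,\ldots,a_n])=[a_2,\ldots,a_n]$, so
\[
\frac{A_n}{B_n}\;=\;a_1+\frac{m(\calg[a_3,\ldots,a_n])}{B_n}\;=\;a_1+\frac{1}{[a_2,\ldots,a_n]}\;=\;\cfa.
\]
For reducedness, any prime $p$ dividing both $A_n$ and $B_n$ would divide $A_n-a_1B_n=m(\calg[a_3,\ldots,a_n])$ by the matching recursion; the induction hypothesis applied to $[a_2,\ldots,a_n]$, whose reduced form has numerator $B_n$ and denominator $m(\calg[a_3,\ldots,a_n])$, forces $\gcd(B_n,m(\calg[a_3,\ldots,a_n]))=1$, contradicting the existence of $p$. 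Hence $A_n/B_n=\cfa$ is already in lowest terms, and comparing with the reduced representative of $\cfa$ yields $m(\calg\cfa)=N\cfa$, which is the ``in particular'' assertion.
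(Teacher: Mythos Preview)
The paper does not prove this theorem; it is quoted from \cite{CS4} without argument. Your overall plan---establish the combinatorial recurrence $m(\calg[a_1,\ldots,a_n])=a_1\,m(\calg[a_2,\ldots,a_n])+m(\calg[a_3,\ldots,a_n])$, match it against the continuant recurrence, then run the classical $\gcd$ argument---is the standard approach and would succeed.

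The gap is in the decomposition itself. Conditioning on a single \emph{interior} edge does not produce the split you want. Already for $\calg[2,2]$ (the straight $3$-tile snake graph, $5$ matchings), conditioning on either interior edge $e_1$ or $e_2$ yields a $2+3$ partition, never the required $4+1$. The correct dichotomy is whether or not the matching $P$ restricts to a perfect matching of the initial zigzag $\calg[a_1]$ (tiles $G_1,\ldots,G_{a_1-1}$); equivalently, whether the two \emph{boundary} edges of the bridge tile $G_{a_1}$ perpendicular to the straight segment $G_{a_1-1},G_{a_1},G_{a_1+1}$ lie in $P$ (one checks they are always both present or both absent). When both are absent, $P$ factors as a matching of $\calg[a_1]$---of which there are exactly $a_1$---times a matching of $\calg[a_2,\ldots,a_n]$. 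When both are present, the zigzag structure of $\calg[a_2]$ forces the matching uniquely not merely on ``the first tile of $\calg[a_2]$'' as you write, but through \emph{all} of $\calg[a_2]$ and the next bridge tile $G_{a_1+a_2}$ as well, leaving precisely $\calg[a_3,\ldots,a_n]$ to be matched freely. Once you replace your interior-edge cut by this restriction criterion and correct the extent of the forced cascade, the rest of your argument goes through verbatim.
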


\subsection{Snake graphs from even continued fractions}
In this subsection, we extend the construction of \cite{CS4} to \emph{even} continued fractions.  Let $\cfb$ be an even continued fraction. Again we form the zigzag graphs  $\calg[\,|b_1|\,],\ldots,\calg[\,|b_m|\,]$ of the absolute values of the $b_i$, but now we join them according to the following rule, see Figure \ref{figgluing}.

\begin{figure}
\begin{center}
  { \scalebox{1}{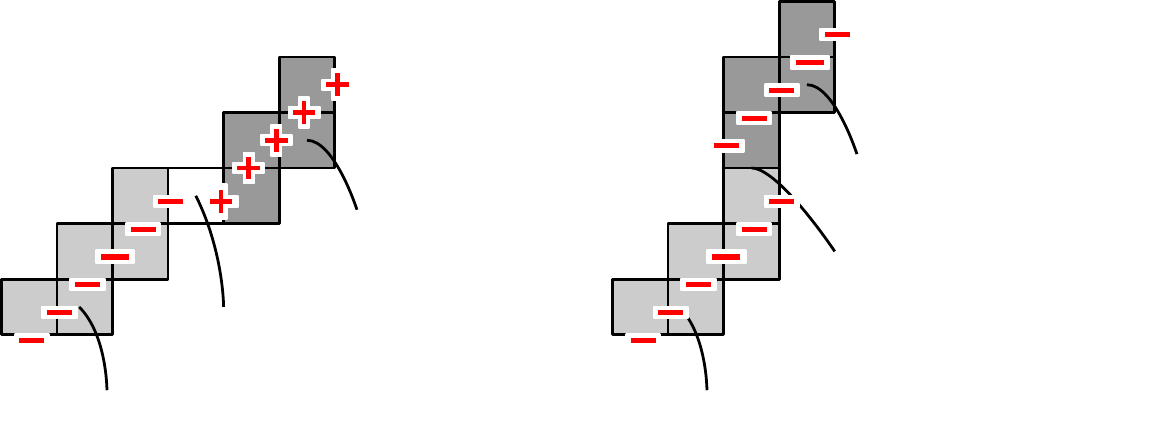}}
 \caption{Joining the zigzag graphs. On the left $b_i$ and $b_{i+1}$ have the same sign, and on the right they have opposite signs.} 
 \label{figgluing}
\end{center}
\end{figure}

Without loss of generality, assume that the last tile of $\calg[\,|b_i|\,]$ is north of the second to last tile, then the first tile of $\calg[\,|b_{i+1}|\,]$ is south of its second tile.
If  $b_i$ and $b_{i+1}$ have the same sign then we join the two graphs by drawing two horizontal edges that form a connecting tile together with the east edge of the last tile of $\calg[\,|b_i|\,]$ and the west edge of the first tile of $\calg[\,|b_{i+1}|\,]$. This case is illustrated in the left picture in Figure \ref{figgluing}.
If $b_i$ and $b_{i+1}$ have opposite signs then we glue the two graphs by identifying the north edge of the last tile of $\calg[\,|b_i|\,]$  with the south edge of the first tile of   $\calg[\,|b_{i+1}|\,]$. This case is illustrated in the right picture in Figure \ref{figgluing}.
Thus if  $b_i$ and $b_{i+1}$ have the same sign then the sign function on the interior edges changes from the subgraph $\calg[\,|b_{i}|\,]$ to $\calg[\,|b_{i+1}|\,]$. On the other hand, if  $b_i$ and $b_{i+1}$ have opposite signs then the sign function stays the same from  $\calg[\,|b_{i}|\,]$ to $\calg[\,|b_{i+1}|\,]$.

The following lemma follows directly from the construction.
\begin{lem}
 \label{lem extra}
 \begin{itemize}
\item[{\rm (a)}] $\calg[-b_1,-b_2,\ldots,-b_m] \cong \calg \cfb$.
\item[{\rm (b)}] $\calg[b_1,b_2]\cong\left\{\begin{array}{ll} \calg[\,|b_1|\,,\,|b_2|\,] &\textup{if $b_1b_2>0$;}\\[8pt]
\calg[\,|b_1|-1,1,\,|b_2|-1] &\textup{if $b_1b_2<0$.}
\end{array}\right.$ \\
\item[{\rm (c)}] $\calg[2,-2,2,-2,\ldots] $ is a zigzag snake graph.
\item[{\rm (d)}] $\calg[2,2,-2,-2,2,2,-2,-2,\ldots]$ is a straight snake graph. 
\end{itemize}
\end{lem}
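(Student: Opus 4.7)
The plan is to verify each of the four claims directly by unwinding the recipe for $\calg\cfb$ given in the paragraph above. Note that this recipe is determined by two pieces of data: the absolute values $|b_i|$, which specify the zigzag pieces $\calg[|b_i|]$, and the sign products $b_ib_{i+1}$, which control whether each junction is formed by a connecting tile (when $b_ib_{i+1}>0$) or by an edge identification (when $b_ib_{i+1}<0$).

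For part (a), the key observation is that negating every entry preserves both $|b_i|$ and every sign product $b_ib_{i+1}$, so the construction produces the same snake graph step by step. For part (b), there is a single junction to analyze. If $b_1b_2>0$, the graph consists of $\calg[|b_1|]$, a connecting tile, and $\calg[|b_2|]$, which is precisely the definition of the positive snake graph $\calg[|b_1|,|b_2|]$ from Section~\ref{sect sgb}. If $b_1b_2<0$, the graph identifies an edge between $\calg[|b_1|]$ and $\calg[|b_2|]$ to produce a perpendicular corner; we then compare this to the positive snake graph $\calg[|b_1|-1,1,|b_2|-1]$, whose middle piece $\calg[1]$ is an interior edge shared by the two connecting tiles and creates exactly the same corner. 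The tile counts ($|b_1|+|b_2|-2$ on both sides) together with the matching local adjacencies yield the isomorphism.

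For (c) and (d), we argue by induction on $m$. In case (c), every consecutive pair in $[2,-2,2,-2,\ldots]$ has opposite sign, so each junction is an edge identification; since each piece $\calg[|b_i|]=\calg[2]$ consists of a single tile, each inductive step attaches exactly one new tile. The WLOG orientation convention, combined with the alternating signs, forces each newly attached tile to break collinearity with the previous two tiles, which is exactly the zigzag property. In case (d), the pattern $(2,2,-2,-2,2,2,-2,-2,\ldots)$ alternates same-sign junctions (connecting tile) with opposite-sign junctions (identification); a direct check on one period of four consecutive entries shows that each period contributes tiles lying in the same row or column as those previously placed, and induction on the number of periods yields the straight snake graph.

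The hard part will be in (c) and (d), where one must track the planar embedding carefully across consecutive junctions so that the WLOG clause ``the last tile of $\calg[|b_i|]$ is north of the second-to-last tile'' is applied consistently at each step. Parts (a) and (b) are essentially definitional; the real content of the lemma is the observation that in these two extremal sign patterns the directional choices propagate uniformly to produce either a zigzag or a straight global shape.
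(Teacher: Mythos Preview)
Your proposal is correct and follows the same approach as the paper, which simply states that ``the following lemma follows directly from the construction'' without further elaboration. You have supplied the details the paper omits: namely, that the construction depends only on the $|b_i|$ and the sign products $b_ib_{i+1}$, and that tracking the junction types through the specific sign patterns in (c) and (d) forces the zigzag and straight shapes respectively.
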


\subsubsection{The sign sequence of  a snake graph of an even continued fraction} We choose our sign function $f$ such that the south edge of the first tile $e_0$ has the same sign as $b_1$. Thus $f(e_0)=\sgn(b_1)$. 
We want to define a  sequence of edges  by $e_0'=e_0,e_1',\ldots,e'_{
\zb}$ in $\calg\cfb$ that realize the sign sequence $\sgn\cfb$ for $\cfb$ introduced in Definition~\ref{defsignb}; here $\zb=|b_1|+\cdots |b_m|$.
The first edge in the sequence is $e_0$. On the subgraph $\calg[\,|b_1|\,]$, we choose the interior edges $e'_1,\ldots,e'_{|b_1|-1}$ and the unique edge $e'_{|b_1|}\in\calg[\,|b_1|\,]^{N\!E}$ such that $f(e'_{|b_1|})=\sgn(b_1)$. Similarly, on the subgraph  $\calg[\,|b_i|\,]$, we choose the interior edges  and the two unique edges in ${}_{SW}\calg[\,|b_i|\,]$ and $\calg[\,|b_i|\,]^{N\!E}$  whose signs are $(-1)^{i+1}\sgn(b_i).$ See Figure \ref{fig signb} and the bottom row of Figure \ref{match4} for examples.
 Thus we have the following equality of sign sequences 
\begin{equation}
 \label{eqsignb} (f(e_0'),f(e_2'),\ldots,f(e'_{\zb}))=\sgn\cfb.
\end{equation}

\begin{figure}
\begin{center}
  {\small \scalebox{1}{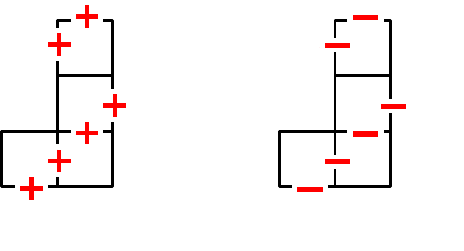}}
 \caption{The edges $e_0',e_1',\ldots,e
 _{\zb}$ carry the signs of $\cfb$. The sign of $e_0'$ is equal to the sign of $b_1$.} 
 \label{fig signb}
\end{center}
\end{figure}

\begin{figure}
\begin{center}
 \small { \scalebox{0.7}{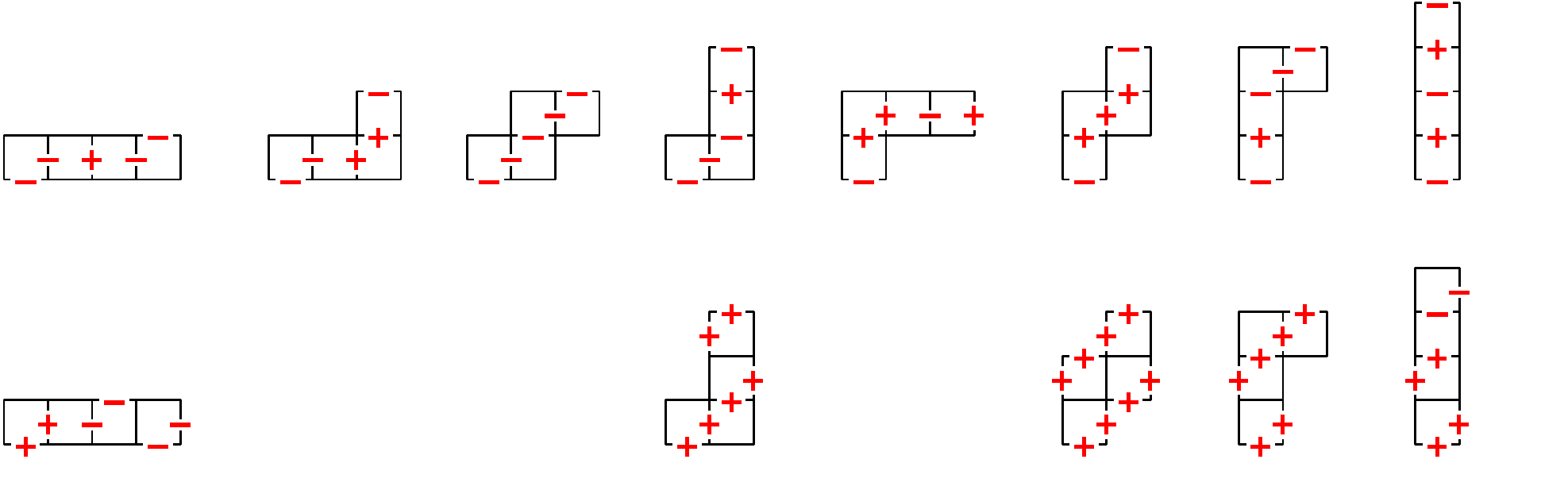}}
 \caption{The top row shows a complete list of snake graphs with 4  tiles together with their positive continued fractions. Those which admit even continued fractions are repeated in the bottom row. The signed edges indicate the sign sequence of the continued fraction.}
 \label{match4}
\end{center}
\end{figure}

\subsection{Correspondence between the two constructions}
We are ready for the  main result of this section.

\begin{thm}
 \label{thm 1}
 Let $\cfa=p/q$  be a positive continued fraction with $a_1>1$,  and  let $\cfb$ be an even continued fraction such that  $\cfb=p/q$ or $p/(p-q)$. Then the following  snake graphs are isomorphic
 \[\calg\cfa\cong\calg[1,a_1-1,a_2,\ldots,a_n]\cong\calg\cfb.\]
 In particular, the number of perfect matchings of $\calg\cfb$ is equal to $p$.
\end{thm}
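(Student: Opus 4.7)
The theorem contains two isomorphisms and a matching-count corollary. The matching count follows immediately from the isomorphisms together with the result of \cite{CS4} that $m(\calg\cfa)=N\cfa=p$, so it suffices to establish the two isomorphisms.

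The first isomorphism $\calg\cfa \cong \calg[1, a_1-1, a_2, \ldots, a_n]$ can be read off from the sign-sequence characterization (\ref{eqsign}). Both snake graphs have $a_1+\cdots+a_n-1$ tiles. Their interior sign sequences have block sizes $(a_1-1, a_2, \ldots, a_n-1)$ and $(0, a_1-1, a_2, \ldots, a_n-1)$ respectively; dropping the empty leading block of the second leaves exactly the block structure of the first, but with the starting sign flipped. Since each snake graph admits exactly two sign functions related by a global sign swap, both sequences describe the same underlying snake graph.

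For the second isomorphism $\calg[1, a_1-1, a_2, \ldots, a_n] \cong \calg\cfb$, I would proceed by strong induction on $p$ (equivalently on $n$). The base case $n=1$ gives $p=a_1$ even and $\cfb=[a_1]$, where all three snake graphs $\calg[a_1]$, $\calg[1,a_1-1]$, and $\calg[b_1]$ are zigzags on $a_1-1$ tiles. For the inductive step, write $q/r = [a_2, \ldots, a_n]$ and invoke Proposition~\ref{prop 1}. If $a_1$ is even, then $b_1 = a_1$ and $[b_2,\ldots,b_m]$ is the even CF of $q/r$; if $a_1$ is odd, then $b_1 = a_1+1$ and $[b_2,\ldots,b_m]$ is the even CF of $-q/(q-r)$, whose snake graph equals $\calg[-b_2,\ldots,-b_m]$ by Lemma~\ref{lem extra}(a). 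In either case the inductive hypothesis applied to $q/r$ (resp.\ $q/(q-r)$), both having numerator less than $p$, produces an isomorphism for the tail snake graph $\calg[b_2, \ldots, b_m]$.

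It then remains to reconcile the prefix constructions. When $a_1$ is even, $\sgn(b_1) = \sgn(b_2) = +$, so the gluing of $\calg[|b_1|]$ to the tail is same-sign (via a connecting tile), exactly matching the connecting tile between $\calg[a_1]$ and $\calg[a_2, \ldots, a_n]$ on the positive-CF side; the first isomorphism then converts the result to the form with leading $1$. When $a_1$ is odd, $b_1 = a_1+1$ is positive and $b_2$ is negative, so the gluing is opposite-sign (edge identification). Here Lemma~\ref{lem extra}(b) yields $\calg[b_1, b_2] \cong \calg[|b_1|-1,\, 1,\, |b_2|-1] = \calg[a_1,\, 1,\, |b_2|-1]$, which is precisely the local effect of the ``$1$'' that appears between $a_1$ and the tail when we apply the first isomorphism. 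The main obstacle is this final reconciliation in the odd case: one must track simultaneously the sign flip in the tail (handled by Lemma~\ref{lem extra}(a)) and the opposite-sign gluing translation (handled by Lemma~\ref{lem extra}(b)), and verify that these combine with the inductive hypothesis to produce exactly the snake graph of $[1, a_1-1, a_2, \ldots, a_n]$.
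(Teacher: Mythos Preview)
Your overall strategy matches the paper's: prove the first isomorphism from the sign-sequence description, then establish $\calg\cfa\cong\calg\cfb$ by induction using Proposition~\ref{prop 1}. However, two genuine gaps remain.

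First, your base case $n=1$ only treats $a_1$ even. When $a_1$ is odd, $p/q=a_1/1$ has no even expansion (both numerator and denominator are odd), so the hypothesis forces $\cfb=p/(p-q)=a_1/(a_1-1)$. By Example~\ref{ex 1} this is $[2,-2,2,\ldots,\pm 2]$ with $a_1-1$ terms; since the signs alternate throughout, every gluing is opposite-sign and the result is a zigzag on $a_1-1$ tiles, matching $\calg[a_1]$. You need this case.

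Second, your odd-$a_1$ inductive step invokes a ``$1$'' between $a_1$ and the tail, but no such $1$ exists in either $\cfa$ or $[1,a_1-1,a_2,\ldots,a_n]$; the $1$ produced by the first isomorphism sits at the front, not at the $a_1/a_2$ junction. Lemma~\ref{lem extra}(b) is a red herring here. The paper's argument is more direct and avoids it entirely: with $b_1=a_1+1$ and $b_2<0$, the zigzag $\calg[|b_1|]$ has $a_1$ tiles, one more than $\calg[a_1]$, but the opposite-sign gluing identifies an edge rather than inserting a connecting tile. Thus the extra tile of $\calg[|b_1|]$ plays exactly the role of the connecting tile between $\calg[a_1]$ and $\calg[a_2,\ldots,a_n]$ on the positive-CF side, and the two constructions produce the same graph once you know (by induction) that $\calg[a_2,\ldots,a_n]\cong\calg[b_2,\ldots,b_m]$. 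No bookkeeping with Lemma~\ref{lem extra}(b) is needed.
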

\begin{proof} The first isomorphism follows directly from the construction of the snake graphs. Indeed the difference between the two snake graphs consists only in the choice of the boundary edge $e_0$.

To prove the second isomorphism, we proceed by induction on $n$. If $n=1$ then  $\calg[a_1]$ is a zigzag graph with $a_1-1$ tiles. If $a_1$ is even then $b_1=a_1$ and we are done. If $a_1$ is odd, then the even continued fraction  cannot be equal to $a_1/1$ and therefore we must have $\cfb=a_1/(a_1-1)$.
 We have seen in Example \ref{ex 1} that $\cfb=[2,-2,2,-2,...\pm2]$ with $m=a_1-1$.  Since the signs in this continued fraction are alternating, the corresponding snake graph $\calg\cfb$ is a zigzag snake graph with $a_1-1$ tiles, thus $\calg[a_1]\cong\calg\cfb$. 
    
Suppose now that $n>1$.
Since $\calg\cfa\cong\calg[1,a_1-1,a_2,\ldots a_n]$ and  the continued fraction $ [1,a_1-1,a_2,\ldots a_n]=p/(p-q)$, we may assume without loss of generality that $\cfa=\cfb=p/q$. Let $q/r $ denote the value of $[a_2,\ldots, a_n]$.
From Proposition \ref{prop 1} we know that  $[b_2,\ldots,b_m] = q/r$ or $-q/(q-r)$. Thus by induction, we may assume that $\calg[a_2,\ldots,a_n] \cong\calg[b_2,\ldots,b_m] $.

The graph $\calg\cfa$ is obtained from the subgraphs $\calg[a_1]$ and $\calg[a_2,\ldots,a_n]$ as described above by inserting two horizontal edges to form the connecting tile $G_{a_1}$. Since $\calg[a_1]$ has $a_1-1$ tiles, the connecting tile $G_{a_1}$ is the $a_1$-th tile of the snake graph $\calg\cfa$. 

If $a_1$ is even,  then $a_1=b_1$, and then $b_1$ and $b_2$ have the same sign by the division algorithm. Thus $\calg[a_2,\ldots,a_n]\cong\calg[b_2,\ldots,b_m]$ and the gluing with $\calg[a_1]=\calg[b_1]$ which yields $\calg\cfa$ respectively $\calg\cfb$ is the same. This shows the result if $a_1$ is even.

Suppose now that $a_1 $ is odd. Then $b_1=a_1+1>0$ and $b_2<0$. 
Now,  $\calg[b_1]$ has one more tile  than $\calg[a_1]$, however, since $b_1 $ and $b_2$ have opposite signs, the procedure by which we glue $\calg[b_1]$ with $\calg[b_2,\ldots,b_m]$ yields the same snake graph as the gluing of $\calg[a_1]$ with $\calg[a_2,\ldots,a_n]$. Thus $\calg\cfa$ is isomorphic to $\calg\cfb$. 
\end{proof}

\begin{cor}\label{cor 3.3}
The  number of tiles of $\calg\cfb $ is equal to \[\sum_{i=1}^m|b_i| - 1- \textup{(number of sign changes in $b_1,b_2,\ldots,b_m$)}.\]
\end{cor}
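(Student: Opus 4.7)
The plan is to carry out a straightforward tile count based directly on the construction in Section 3.3. The construction builds $\calg\cfb$ out of the $m$ zigzag subgraphs $\calg[\,|b_1|\,], \ldots, \calg[\,|b_m|\,]$ glued together according to whether consecutive $b_i$'s agree or disagree in sign.

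First I would record the sizes of the building blocks: by the definition of $\calg[a]$ in Section \ref{sect sgb}, the zigzag graph $\calg[\,|b_i|\,]$ has exactly $|b_i|-1$ tiles. So $\sum_{i=1}^m(|b_i|-1) = \sum_{i=1}^m |b_i| - m$ tiles come from the zigzag pieces alone. Next I would count the extra tiles introduced by the gluing step between $\calg[\,|b_i|\,]$ and $\calg[\,|b_{i+1}|\,]$. Consulting Figure \ref{figgluing}, the gluing rule is: when $b_i b_{i+1}>0$ (no sign change) a new connecting tile is inserted, contributing one tile; when $b_i b_{i+1}<0$ (sign change) the two subgraphs share an existing edge and no new tile is created. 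Hence the total contribution of the gluings equals the number of indices $i\in\{1,\ldots,m-1\}$ with $\sgn(b_i)=\sgn(b_{i+1})$, i.e. $(m-1)-c$ where $c$ denotes the number of sign changes in $b_1,\ldots,b_m$.

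Adding these two contributions yields
\[
\sum_{i=1}^m |b_i| - m + (m-1) - c \;=\; \sum_{i=1}^m |b_i| - 1 - c,
\]
which is precisely the claimed formula. I would then verify the corner case $m=1$, where there are no gluings, the right-hand side reduces to $|b_1|-1$, and indeed $\calg\cfb=\calg[\,|b_1|\,]$ has $|b_1|-1$ tiles.

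I do not expect any real obstacle here; the one thing to be careful about is not to double-count the connecting tile when a sign change does \emph{not} occur, and conversely not to forget that sign changes still contribute zero tiles even though the corresponding edge is shared. A quick sanity check against Lemma \ref{lem extra}(c,d) — the alternating case $[2,-2,2,-2,\ldots]$ giving a zigzag with $m$ tiles ($c=m-1$, total $2m-1-(m-1)=m$) and the case $[2,2,-2,-2,\ldots]$ giving a straight graph — confirms the formula, so the argument is complete.
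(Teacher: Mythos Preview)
Your proof is correct and uses essentially the same idea as the paper: each zigzag piece $\calg[\,|b_i|\,]$ contributes $|b_i|-1$ tiles, and each gluing adds one extra tile precisely when $b_i$ and $b_{i+1}$ have the same sign. The only difference is organizational---the paper phrases this as an induction on $m$ (peeling off $\calg[b_1]$ and noting that the gluing adds $|b_1|$ or $|b_1|-1$ tiles according to the sign of $b_1b_2$), whereas you perform the global count directly; the content is the same.
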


\begin{proof}
 If $m=1$ then $\calg[b_1]$ has $b_{1}-1$ tiles. For $m>1$, we have seen that $\calg\cfb$ is obtained by gluing $\calg[b_1]$ to $\calg[b_2,\ldots,b_m]$. By induction we may assume that the number of tiles in  $\calg[b_2,\ldots,b_m]$ is $\sum_{i=2}^{m}|b_i| - 1- $(number of sign changes in $b_2,\ldots,b_m$), and gluing $\calg[b_1]$ to it will increase this number by $b_1$, if $b_1$ and $b_2 $ have the same sign, and by $b_1-1 $, if the signs of $b_1$ and $b_2 $ are opposite. 
This completes the proof.
\end{proof}
\begin{remark}
 Theorem \ref{thm 1} and its corollary also hold more generally for continued fractions with entries $b_i \in \mathbb{Z}\setminus\{0,-1\}$, but we will not need this here.
\end{remark}

\section{Two-bridge knots from continued fractions}\label{sect knots}
In this  section, we briefly review two-bridge knots. These knots have been studied first by Schubert in 1956
\cite{Sc}. For a detailed introduction see for example \cite{KL}.

A \emph{knot} is a subset  of $\mathbb{R}^3$ that is homeomorphic to a circle. A \emph{link with $r$ components} is a subset  of $\mathbb{R}^3$  that is homemorphic to a disjoint union of $r$ circles. Thus a knot is a link with one component. A knot is \emph{alternating} if the crossings alternate between over and under when traveling along a strand.

To every rational number $p/q\ge 1$ one can associate a link $C(p/q)$ or $C\cfa$ using the positive continued fraction expansion $p/q=\cfa$. 

\begin{example}
$ 27/10=[2,1,2,3]$ corresponds to the knot shown on the left in Figure \ref{knot2123}.
\end{example}

\begin{figure}
\begin{center}
\scalebox{0.6}  {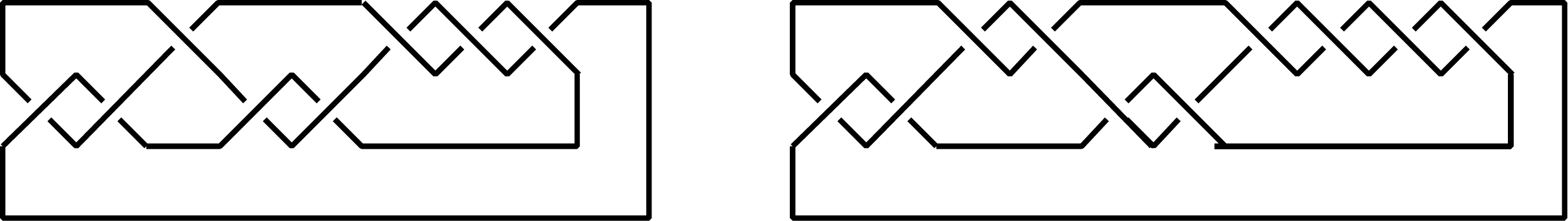}
 \caption{Two isotopic knots $C(27/10)=C[2,1,2,3]\cong C[2,2,-2,4]$. The knot on the left has a positive continued fraction $[2,1,2,3]$ and is alternating. The knot on the right has an even continued fraction $[2,2,-2,4]$ and its diagram is not alternating from the second to the third braid and from the third to the forth braid.  }
 \label{knot2123}
\end{center}
\end{figure}

The link consists of $n$ pieces, each of which is a 2-strand braid with $a_i$ crossings, where $i=1,2,\ldots,n$. These pieces are joined in such a way that the link is alternating, see Figure~\ref{knotschema}.
\begin{figure}
\begin{center}
  \scalebox{0.8}{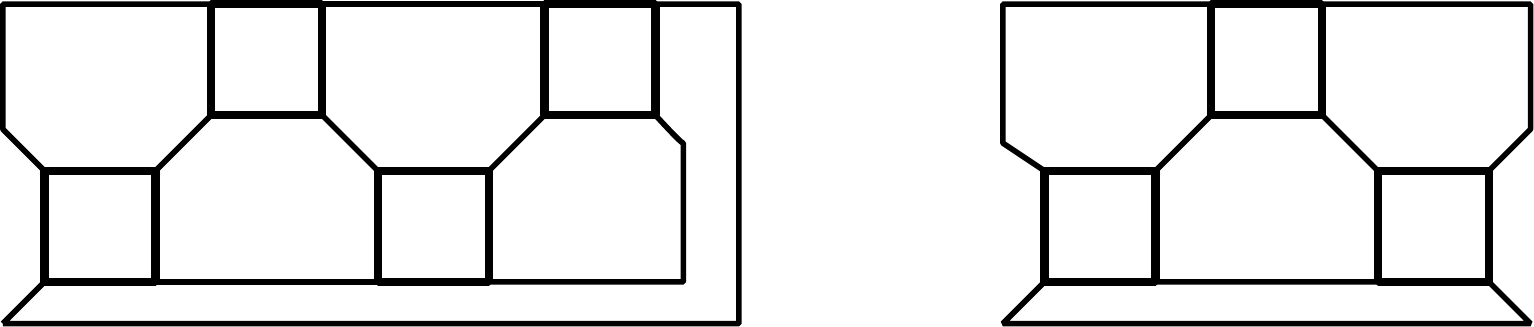}
 \caption{2-Bridge link diagrams; $n=4$ on the left, $n=3$ on the right; each box represents a 2-strand braid with $a_i$ crossings.}
 \label{knotschema}
\end{center}
\end{figure}
It is well known that the link $C(p/q)$ is a knot if $p$ is odd and it is a link with exactly two connected components if $p$ is even \cite{Sc}.  Moreover, in the case where $p$ is odd, the two knots $C(p/q)$ and $C(p/(p-q))$ are isotopic. Indeed this follows simply from the fact that the continued fractions are related as follows $p/q=\cfa$ and $p/(p-q)=[1,a_1-1,a_2,\ldots,a_n]$.
Thus when we are considering the knot or link $C\cfa$, we may always assume that $a_1>1$. 

On the other hand, by Proposition \ref{prop 01}, at least one of the rational numbers $p/q $ and $p/(p-q)$ can be represented by an even continued fraction $\cfb$. We can also construct a link $C\cfb$ from this even continued fraction, essentially in the same way, except that a sign change now means that the diagram is not alternating at that point. See for example the knot $C[2,2,-2,4]$ in Figure \ref{knot2123}.

It is known that if $p/q$ is equal to the positive continued fraction $\cfa$ and to the even continued fraction $\cfb$ then the corresponding links $C\cfa$ and $C\cfb$ are isotopic. See for example \cite{KM}.

\smallskip
\subsection{Orientation} One can orient the strand of a knot in one of two ways. In a link, one can orient each conponente in one of two ways. Fixing an orientation of the strands induces a sign on each crossing according to the following cases.
\[\begin{array}
{cccc}
\scalebox{0.6}{
\begingroup%
  \makeatletter%
  \providecommand\color[2][]{%
    \errmessage{(Inkscape) Color is used for the text in Inkscape, but the package 'color.sty' is not loaded}%
    \renewcommand\color[2][]{}%
  }%
  \providecommand\transparent[1]{%
    \errmessage{(Inkscape) Transparency is used (non-zero) for the text in Inkscape, but the package 'transparent.sty' is not loaded}%
    \renewcommand\transparent[1]{}%
  }%
  \providecommand\rotatebox[2]{#2}%
  \ifx\svgwidth\undefined%
    \setlength{\unitlength}{178.039744bp}%
    \ifx\svgscale\undefined%
      \relax%
    \else%
      \setlength{\unitlength}{\unitlength * \real{\svgscale}}%
    \fi%
  \else%
    \setlength{\unitlength}{\svgwidth}%
  \fi%
  \global\let\svgwidth\undefined%
  \global\let\svgscale\undefined%
  \makeatother%
  \begin{picture}(1,0.19321529)%
    \put(0,0){\includegraphics[width=\unitlength]{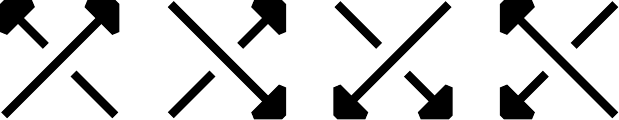}}%
  \end{picture}%
\endgroup%
}
& \qquad \textup{and}\qquad&
\scalebox{0.6}{
\begingroup%
  \makeatletter%
  \providecommand\color[2][]{%
    \errmessage{(Inkscape) Color is used for the text in Inkscape, but the package 'color.sty' is not loaded}%
    \renewcommand\color[2][]{}%
  }%
  \providecommand\transparent[1]{%
    \errmessage{(Inkscape) Transparency is used (non-zero) for the text in Inkscape, but the package 'transparent.sty' is not loaded}%
    \renewcommand\transparent[1]{}%
  }%
  \providecommand\rotatebox[2]{#2}%
  \ifx\svgwidth\undefined%
    \setlength{\unitlength}{178.039744bp}%
    \ifx\svgscale\undefined%
      \relax%
    \else%
      \setlength{\unitlength}{\unitlength * \real{\svgscale}}%
    \fi%
  \else%
    \setlength{\unitlength}{\svgwidth}%
  \fi%
  \global\let\svgwidth\undefined%
  \global\let\svgscale\undefined%
  \makeatother%
  \begin{picture}(1,0.1933305)%
    \put(0,0){\includegraphics[width=\unitlength]{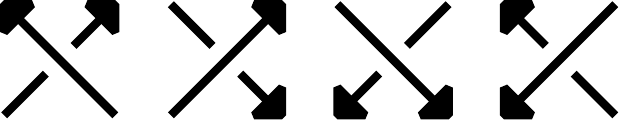}}%
  \end{picture}%
\endgroup%
}
 \\
 \textup{positive}&&  \textup{negative}
\end{array}\]

We shall always use the following convention for the orientations of the strands. If $\cfb$ is an even continued fraction then the first crossing in the $b_1$-braid is \scalebox{.4}{
\begingroup%
  \makeatletter%
  \providecommand\color[2][]{%
    \errmessage{(Inkscape) Color is used for the text in Inkscape, but the package 'color.sty' is not loaded}%
    \renewcommand\color[2][]{}%
  }%
  \providecommand\transparent[1]{%
    \errmessage{(Inkscape) Transparency is used (non-zero) for the text in Inkscape, but the package 'transparent.sty' is not loaded}%
    \renewcommand\transparent[1]{}%
  }%
  \providecommand\rotatebox[2]{#2}%
  \ifx\svgwidth\undefined%
    \setlength{\unitlength}{34.4bp}%
    \ifx\svgscale\undefined%
      \relax%
    \else%
      \setlength{\unitlength}{\unitlength * \real{\svgscale}}%
    \fi%
  \else%
    \setlength{\unitlength}{\svgwidth}%
  \fi%
  \global\let\svgwidth\undefined%
  \global\let\svgscale\undefined%
  \makeatother%
  \begin{picture}(1,0.98982558)%
    \put(0,0){\includegraphics[width=\unitlength]{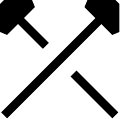}}%
  \end{picture}%
\endgroup%
} if $b_1>0$ and it is \scalebox{.4}{
\begingroup%
  \makeatletter%
  \providecommand\color[2][]{%
    \errmessage{(Inkscape) Color is used for the text in Inkscape, but the package 'color.sty' is not loaded}%
    \renewcommand\color[2][]{}%
  }%
  \providecommand\transparent[1]{%
    \errmessage{(Inkscape) Transparency is used (non-zero) for the text in Inkscape, but the package 'transparent.sty' is not loaded}%
    \renewcommand\transparent[1]{}%
  }%
  \providecommand\rotatebox[2]{#2}%
  \ifx\svgwidth\undefined%
    \setlength{\unitlength}{34.379488bp}%
    \ifx\svgscale\undefined%
      \relax%
    \else%
      \setlength{\unitlength}{\unitlength * \real{\svgscale}}%
    \fi%
  \else%
    \setlength{\unitlength}{\svgwidth}%
  \fi%
  \global\let\svgwidth\undefined%
  \global\let\svgscale\undefined%
  \makeatother%
  \begin{picture}(1,0.99041615)%
    \put(0,0){\includegraphics[width=\unitlength]{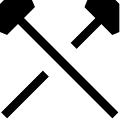}}%
  \end{picture}%
\endgroup%
} if $b_1<0$. In particular, the sign of the first crossing is equal to the sign of $b_1.$ The $|b_i|$ crossings in the $i$-th braid all have the same sign. We call the braid positive if these signs are $+$ and negative if they are $-$.

\begin{lem}
 \label{lem 3}
 With the conventions above, if $\cfb$ is an even continued fraction then the last crossing is of the following form.
\[\begin{array}{clccl} 
\scalebox{0.4}{} &\textup{if  $m$ is even and $b_m>0$;} &\qquad&
\scalebox{0.4}{
\begingroup%
  \makeatletter%
  \providecommand\color[2][]{%
    \errmessage{(Inkscape) Color is used for the text in Inkscape, but the package 'color.sty' is not loaded}%
    \renewcommand\color[2][]{}%
  }%
  \providecommand\transparent[1]{%
    \errmessage{(Inkscape) Transparency is used (non-zero) for the text in Inkscape, but the package 'transparent.sty' is not loaded}%
    \renewcommand\transparent[1]{}%
  }%
  \providecommand\rotatebox[2]{#2}%
  \ifx\svgwidth\undefined%
    \setlength{\unitlength}{34.4bp}%
    \ifx\svgscale\undefined%
      \relax%
    \else%
      \setlength{\unitlength}{\unitlength * \real{\svgscale}}%
    \fi%
  \else%
    \setlength{\unitlength}{\svgwidth}%
  \fi%
  \global\let\svgwidth\undefined%
  \global\let\svgscale\undefined%
  \makeatother%
  \begin{picture}(1,0.98982558)%
    \put(0,0){\includegraphics[width=\unitlength]{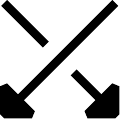}}%
  \end{picture}%
\endgroup%
} &\textup{if $m$ is odd and $b_m<0$};\\
\scalebox{0.4}{
\begingroup%
  \makeatletter%
  \providecommand\color[2][]{%
    \errmessage{(Inkscape) Color is used for the text in Inkscape, but the package 'color.sty' is not loaded}%
    \renewcommand\color[2][]{}%
  }%
  \providecommand\transparent[1]{%
    \errmessage{(Inkscape) Transparency is used (non-zero) for the text in Inkscape, but the package 'transparent.sty' is not loaded}%
    \renewcommand\transparent[1]{}%
  }%
  \providecommand\rotatebox[2]{#2}%
  \ifx\svgwidth\undefined%
    \setlength{\unitlength}{34.4bp}%
    \ifx\svgscale\undefined%
      \relax%
    \else%
      \setlength{\unitlength}{\unitlength * \real{\svgscale}}%
    \fi%
  \else%
    \setlength{\unitlength}{\svgwidth}%
  \fi%
  \global\let\svgwidth\undefined%
  \global\let\svgscale\undefined%
  \makeatother%
  \begin{picture}(1,0.98982558)%
    \put(0,0){\includegraphics[width=\unitlength]{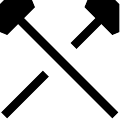}}%
  \end{picture}%
\endgroup%
} &\textup{if  $m$ is even and $b_m<0$;}& &
\scalebox{0.4}{
\begingroup%
  \makeatletter%
  \providecommand\color[2][]{%
    \errmessage{(Inkscape) Color is used for the text in Inkscape, but the package 'color.sty' is not loaded}%
    \renewcommand\color[2][]{}%
  }%
  \providecommand\transparent[1]{%
    \errmessage{(Inkscape) Transparency is used (non-zero) for the text in Inkscape, but the package 'transparent.sty' is not loaded}%
    \renewcommand\transparent[1]{}%
  }%
  \providecommand\rotatebox[2]{#2}%
  \ifx\svgwidth\undefined%
    \setlength{\unitlength}{34.4bp}%
    \ifx\svgscale\undefined%
      \relax%
    \else%
      \setlength{\unitlength}{\unitlength * \real{\svgscale}}%
    \fi%
  \else%
    \setlength{\unitlength}{\svgwidth}%
  \fi%
  \global\let\svgwidth\undefined%
  \global\let\svgscale\undefined%
  \makeatother%
  \begin{picture}(1,0.98982558)%
    \put(0,0){\includegraphics[width=\unitlength]{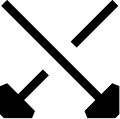}}%
  \end{picture}%
\endgroup%
} &\textup{if $m$ is odd and $b_m>0$}.
\end{array}\]
\end{lem}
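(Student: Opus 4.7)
The plan is to propagate the orientation data starting from the first crossing (which is fixed by convention in terms of $\sgn(b_1)$) through to the last crossing, relying on two key facts: \emph{(i)} each $|b_i|$ is even, so within a single braid the two strands do not permute and their orientations at the bottom of the braid agree with those at the top; and \emph{(ii)} at the junction between the $i$-th and $(i+1)$-st braids, the gluing rule from Section~\ref{sect knots} (alternating when $b_ib_{i+1}>0$, non-alternating when $b_ib_{i+1}<0$) determines whether the over/under strand role flips or is preserved when passing into the next braid.

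I would carry out the proof by induction on $m$. For the base case $m=1$, the diagram consists of a single $b_1$-braid with $|b_1|$ identical crossings. The first crossing is the one dictated by the convention (\scalebox{0.4}{\includegraphics{crossing2.pdf}} for $b_1>0$ and \scalebox{0.4}{\includegraphics{crossing4.pdf}} for $b_1<0$), and since $|b_1|$ is even, direct inspection of the braid shows that after an even number of crossings of that sign the last crossing must be in the configuration asserted for ``$m$ odd'' with the matching sign of $b_m=b_1$.

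For the inductive step I would assume the lemma for $[b_1,\ldots,b_{m-1}]$ and append the $b_m$-braid. By the induction hypothesis the last crossing of the $(m-1)$-diagram has one of the four standard forms, which pins down the orientations and positions of the two strands arriving at the top of the $b_m$-braid. Two subcases then arise: if $\sgn(b_{m-1})=\sgn(b_m)$, the junction is alternating and the over/under type of the first crossing of the $b_m$-braid is the opposite of the last crossing of the $b_{m-1}$-braid; if $\sgn(b_{m-1})=-\sgn(b_m)$, the junction is non-alternating and the over/under type is preserved. Fact (i) then implies that the last crossing of the $b_m$-braid has the same structure as its first crossing. Checking the resulting two-by-two table against the two-by-two table in the statement (parity of $m$ against sign of $b_m$) yields exactly the four diagrams listed.

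The main obstacle will be the bookkeeping at the junctions: one has to verify consistently, in both the alternating and non-alternating gluings, that the strand heading ``down-left'' versus ``down-right'' is routed correctly so that the signed crossing type agrees with the picture claimed. This is a purely local check at each junction, relying on the fixed orientation convention at the top and the observation that the parity of the \emph{braid index} $m$ (rather than $\ell_m=|b_1|+\cdots+|b_m|$) is what controls which of the two geometric configurations the last braid sits in, because each $|b_i|$ is even.
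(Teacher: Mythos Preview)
Your approach is correct and rests on the same key observation as the paper's proof, namely your fact~(i): since each $|b_i|$ is even, a strand entering a braid from one corner exits from the adjacent corner on the same side, so the two strands are not permuted. The paper, however, uses this observation globally rather than inductively: it simply notes that the entrance/exit pattern of each braid is fixed (SW$\leftrightarrow$SE and NE$\leftrightarrow$NW), draws the resulting orientation flow through the entire schematic diagram at once (Figure~\ref{orientation} for $m$ even, and the analogous picture for $m$ odd), and then reads off the form of the last crossing directly from $\sgn(b_m)$ and the parity of~$m$.

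Your fact~(ii) and the accompanying case analysis on $\sgn(b_{m-1}b_m)$ are therefore not needed. The routing of strands between consecutive braids is dictated by the fixed schematic of Figure~\ref{knotschema} and does not depend on the signs of the $b_i$; the signs only affect the over/under data \emph{within} each braid, and the over/under type of the last crossing is determined directly by $\sgn(b_m)$ together with the parity of $m$ (which fixes the geometric orientation of the $b_m$-braid in the schema). So while your induction goes through, the alternating/non-alternating junction bookkeeping is extra work that the paper's global argument avoids entirely.
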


\begin{proof}  Using our schematic illustration of the link, we see that the strand that enters the $b_i$-braid from southwest (respectively southeast)  must leave the $b_i$-braid towards southeast (respectively southwest), because $b_i$ is even. Similarly the strand entering the $b_i$-braid from the northeast (respectively northwest) will exit the braid to the northwest (respectively northeast). If $m$ is even, the orientation is as shown in Figure \ref{orientation}. 
\begin{figure}
\begin{center}
  \scalebox{0.8}{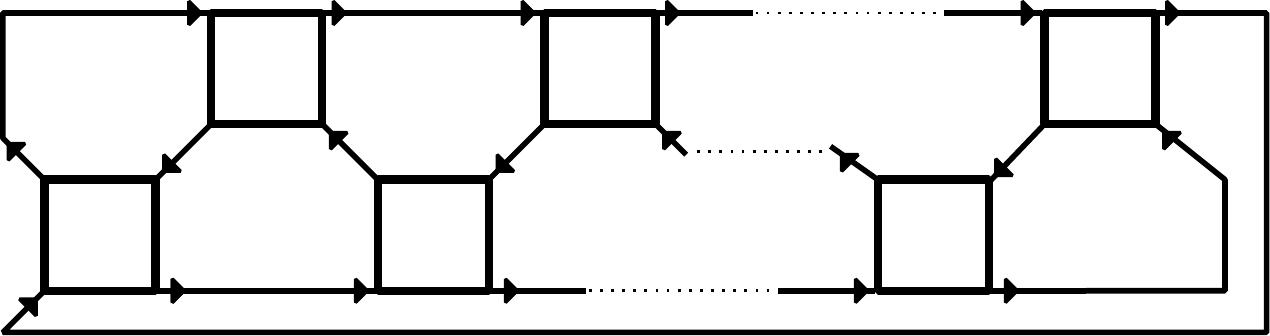}
 \caption{The orientation of the strands in $C\cfb$ when the $b_i$ are even and $m$ is even.}
 \label{orientation}
\end{center}
\end{figure}
Thus if $m$ is even then, since $b_m$  is even, the last crossing   is \scalebox{0.4}{} if $b_m>0$ and  \scalebox{0.4}{} if $b_m<0$.
The odd case is proved in a similar way.
\end{proof}

\begin{cor} \label{cor 2}
 The signs of the crossings in the $b_i$-braid are $(-1)^{i+1}\sgn(b_i)$. 
 In particular,
 \begin{itemize}
\item [{\rm (a)}] the sign sequence $\sgn\cfb$ of the even continued fraction is equal to the  sequence of the signs of the crossings in the link diagram $C\cfb$.

\item [{\rm (b)}] the type sequence $\type\cfb$ of the even continued fraction is equal to the sequence of the  signs of the braids in $C\cfb$.  
\end{itemize}

\end{cor}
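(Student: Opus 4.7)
The plan is to derive this directly from Lemma \ref{lem 3} (and its proof, which analyzes the orientations of the strands). The corollary has three assertions: the main sign claim for each braid, plus its two consequences (a) and (b). Once the main claim is established, parts (a) and (b) are essentially reformulations of Definition \ref{defsignb}, so I would handle them last as short remarks.

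First I would argue that \emph{all} $|b_i|$ crossings in the $i$-th braid receive the same sign. Since $b_i$ is even, a strand entering the $b_i$-braid from the SW must exit to the SE (and similarly for the other three quadrants), as established in the proof of Lemma \ref{lem 3}. Consequently the two strands in the $i$-th braid retain a fixed pair of orientations throughout the braid, and every one of the $|b_i|$ crossings has the same form—either all \scalebox{0.4}{} (when $b_i>0$) or all \scalebox{0.4}{} (when $b_i<0$). Combined with the fixed strand orientations, this forces a common crossing sign.

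Next I would compute that common sign and show it equals $(-1)^{i+1}\sgn(b_i)$, by induction on $i$. For the base case $i=1$, our orientation convention specifies that the first crossing is \scalebox{0.4}{} if $b_1>0$ and \scalebox{0.4}{} if $b_1<0$, with both strands oriented upward entering the braid; matching against the sign conventions for positive and negative crossings gives sign $\sgn(b_1)=(-1)^{1+1}\sgn(b_1)$. For the inductive step, I would use Figure \ref{orientation}: passing from the $(i{-}1)$-st to the $i$-th braid, the two strands switch between going ``both up'' and ``both down'' (equivalently, the pair of local orientations at the braid flips). Because parity of the crossing sign is determined by the pair (crossing type, strand orientations), flipping the orientations while keeping the crossing type fixed toggles the sign; hence if the sign at braid $i{-}1$ is $(-1)^i\sgn(b_{i-1})$, then at braid $i$ a crossing of the same geometric type would have sign $(-1)^{i+1}\sgn(b_{i-1})$, and inserting the actual sign of $b_i$ yields $(-1)^{i+1}\sgn(b_i)$.

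Finally, (a) is immediate: by Definition \ref{defsignb}(a), $\sgn\cfb$ consists of $|b_i|$ consecutive copies of $(-1)^{i+1}\sgn(b_i)$ for $i=1,\ldots,m$, which is precisely the sequence of crossing signs read off along the link diagram. Part (b) is then the ``collapsed'' version: each braid contributes the single common sign $(-1)^{i+1}\sgn(b_i)$, and Definition \ref{defsignb}(b) defines $\type\cfb$ as exactly this sequence.

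I expect the main obstacle to be the inductive step on orientations: one must verify carefully, from the schematic of the link in Figures \ref{knotschema} and \ref{orientation}, that the pair of local strand orientations at consecutive braids really does flip, so that the sign acquires the factor $(-1)^{i+1}$ rather than $(-1)^i$. This is a bookkeeping argument about which strand enters from which quadrant, and it is cleanest to organize it by separately considering braids on the ``top row'' (even $i$) and the ``bottom row'' (odd $i$) of the standard 2-bridge diagram.
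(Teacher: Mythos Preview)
Your overall plan—deduce the braid-sign formula from the orientation analysis in Lemma~\ref{lem 3} and then read off (a) and (b) from Definition~\ref{defsignb}—is exactly what the paper does; its proof is the single line ``This follows directly from Lemma~\ref{lem 3}.'' The base case and the handling of (a) and (b) are fine.

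There is, however, a genuine error in your inductive step. You assert that when both strand orientations are reversed (``both up'' becomes ``both down'') while the crossing type is held fixed, the crossing sign toggles. This is false: reversing the orientation of \emph{both} strands at a crossing preserves its sign (rotate the local picture by $180^\circ$, or compute $\det(-\vec u,-\vec v)=\det(\vec u,\vec v)$). It is reversing the orientation of exactly \emph{one} strand that flips the sign. So even granting your premise about the orientations, the stated conclusion does not follow, and the induction breaks down.

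The actual reason the sign picks up the factor $(-1)^{i+1}$ is precisely the bookkeeping you flag at the end as ``the main obstacle'': one must read off from Figure~\ref{orientation} both the orientations of the two strands in the $i$-th braid \emph{and} the over/under type of its crossings (which depends on $\sgn(b_i)$ and on whether the braid sits in the top row or the bottom row of the schematic), and then check the sign directly. Lemma~\ref{lem 3} carries out exactly this computation for the last braid; since the orientation pattern established in its proof and depicted in Figure~\ref{orientation} is periodic in $i$ with period $2$, the same verification applies verbatim to every braid. Replacing your orientation-flip heuristic with this direct check repairs the argument.
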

\begin{proof}
 This follows directly from Lemma \ref{lem 3}.
\end{proof}
\section{Jones polynomial}\label{sect Jones}
The Jones polynomial of an oriented link is an important invariant. For 2-bridge links, the Jones polynomial has been computed in \cite{LM,N}. 
 For general facts about the Jones polynomial see for example the book by Lickorish \cite{L}. 
 
 The Jones polynomial $V(L)$ of an oriented link $L$ can be defined recursively 
as follows. The Jones polynomial of the unknot is $1$, and whenever three oriented links $L_-,L_+$ and $L_0$ are the same except in the neighborhood of a point where they are as shown in Figure \ref{crossing5} then 
\[V(L_-) = t^{-2} \,V(L_+)+\ze\, V(L_0),\]
where $\ze= t^{-1}(t^{-1/2}-t^{1/2})= t^{-3/2}-t^{-1/2}$.
\begin{figure}
\begin{center}
  \scalebox{0.8}{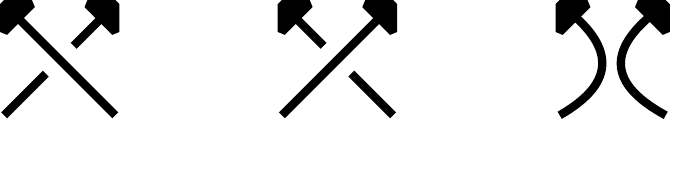}
 \caption{Definition of the Jones polynomial}
 \label{crossing5}
\end{center}
\end{figure}
 Equivalently,
\[V(L_+) = t^{2}\, V(L_+)+\bar\ze \,V(L_0),\]
where $\bar\ze= t^2(-\ze)=t(t^{1/2}-t^{-1/2})$.

\begin{remark}
 Usually the defining identity is stated as
 \[t^{-1} V(L_+)-t\, V(L_-)  + (t^{-1/2}-t^{1/2}) V(L_0)=0.\]

\end{remark}
\begin{example}The Jones polynomial  $V_{\!{\displaystyle\circ}\,{\displaystyle\circ}}$ of two disjoint copies of the unknot can be computed using the defining relation as shown on the top left of Figure \ref{fig Jonex}. Since $L_-$ and $L_+$ are both the unknot we see that \[V_{\!{\displaystyle\circ}\,{\displaystyle\circ}}= (1-t^{-2})/\ze=-t^{-{1}/{2}}-t^{{1}/{2}}.\]

The Jones polynomial of the Hopf link is computed in the top right of Figure \ref{fig Jonex}. We have \[V_{\textup{Hopf link}}=t^{-2} (-t^{-{1}/{2}}-t^{{1}/{2}})+\ze=-t^{-5/2}-t^{-1/2}.\]

The Jones polynomial of the trefoil knot is computed in the bottom left of Figure \ref{fig Jonex}. We have \[V_{\textup{trefoil}} =t^{-2} +\ze V_{\textup{Hopf link}}=t^{-2} +( t^{-3/2}-t^{-1/2})(-t^{-5/2}-t^{-1/2})=-t^{-4}+t^{-3}+t^{-1}.\]
\end{example}
\begin{figure}
\begin{center}
 \tiny \scalebox{1.5}{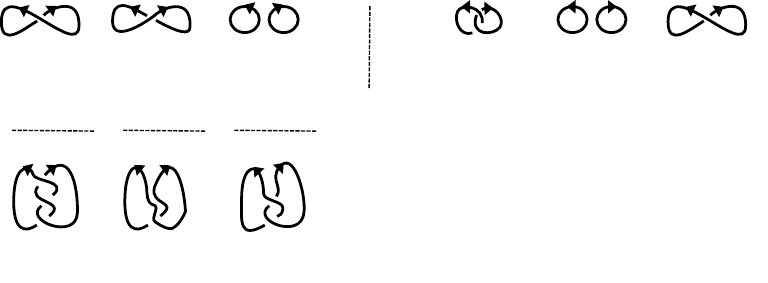}
 \caption{Computation of the Jones polynomial of two copies of the unknot ($L_0$ in the top left), the Hopf link ($L_-$ in the top right) and the  the trefoil knot  ($L_-$ in the bottom left)}
 \label{fig Jonex}
\end{center}
\end{figure}

The Jones polynomial is a Laurent polynomial in $t^{1/2}$. Let $\overline{(\ )}\colon \mathbb{Z}[t^{1/2},t^{-1/2}]\to \mathbb{Z}[t^{1/2},t^{-1/2}]$ be the algebra automorphism of order two that sends $t^{1/2} $ to $t^{-1/2}$. This is consistent with our definition of $\bar\ze$.
If $\overline L$ is the mirror image of the link $L$ then $V(\overline{L})=\overline{V(L)}$.

If $L$ is a 2-bridge link, $L=C\cfb=C(p/q)$ with $\cfb$ an even continued fraction, then the following are equivalent
\[ \textup{$L$ is a knot} \ \Longleftrightarrow\  \textup{$p$ is odd}\ \Longleftrightarrow\  \textup{$m$ is even}\ \Longleftrightarrow\  V(L)\in \mathbb{Z}[t,t^{-1}]. \]
If $L$ is not a knot then it is a 2-component link and $V(L)\in t^{1/2}\mathbb{Z}[t,t^{-1}] $.
If $L$ is a knot and $L'$ is  the same knot with reversed  orientation (running through $L $ in the opposite direction) then $V(L)=V(L')$. Thus the Jones polynomial of a knot does not depend on the orientation of the knot. If $L$ is a link with components $L_1,L_2$ and $L'$ is obtained from $L$ by reversing the orientation of one component, then 
\[V(L')=t^{-3\,lk(L_1,L_2)} V(L),\]
where $lk(L_1,L_2) = 1/2 (\textup{sum of signs of crossings between $L_1$ and $L_2$})$, see \cite[page 26]{L}.
\begin{example}\label{ex 4}
 Let $L=C[4]$ with first crossing \scalebox{.4}{} and let $L'$ be the same link with first crossing \scalebox{.4}{
\begingroup%
  \makeatletter%
  \providecommand\color[2][]{%
    \errmessage{(Inkscape) Color is used for the text in Inkscape, but the package 'color.sty' is not loaded}%
    \renewcommand\color[2][]{}%
  }%
  \providecommand\transparent[1]{%
    \errmessage{(Inkscape) Transparency is used (non-zero) for the text in Inkscape, but the package 'transparent.sty' is not loaded}%
    \renewcommand\transparent[1]{}%
  }%
  \providecommand\rotatebox[2]{#2}%
  \ifx\svgwidth\undefined%
    \setlength{\unitlength}{34.05bp}%
    \ifx\svgscale\undefined%
      \relax%
    \else%
      \setlength{\unitlength}{\unitlength * \real{\svgscale}}%
    \fi%
  \else%
    \setlength{\unitlength}{\svgwidth}%
  \fi%
  \global\let\svgwidth\undefined%
  \global\let\svgscale\undefined%
  \makeatother%
  \begin{picture}(1,1.010279)%
    \put(0,0){\includegraphics[width=\unitlength]{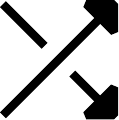}}%
  \end{picture}%
\endgroup%
}. Then $V(L)= -t^{1/2}+t^{3/2}-t^{5/2}-t^{9/2}$
 and
 $V(L')=
 -t^{-11/2}+t^{-9/2}-t^{-7/2}-t^{-3/2}$. On the other hand, the number $lk(L_1,L_2)$ is $\frac{1}{2}(-4)=-2$, and thus $V(L')= t^{-6}\,V(L).$
\end{example}

It is important to keep this small subtlety in mind when working with data bases, especially if it is not immediately obvious which conventions were used for the orientations of the link components. In this paper, the orientation is fixed.

\subsection{The Jones polynomial of a 2-bridge knot}
In this subsection, we compute recursion formulas for the Jones polynomials of 2-bridge knots and links. We also give a direct formula for the degree and compare the signs of the leading coefficients. Recall that because of our conventions (see section \ref{sect knots}), the orientation of the links is fixed and therefore the Jones polynomials are well defined.


 In what follows, we will work with recursive formulas for the Jones polynomials, and we will use the conventions that the expressions $C[b_1,b_2,\ldots,b_{0}]$ and $C[\ ]$ both denote the unknot and hence $V_{[b_1,b_2,\ldots,b_{0}]}=V_{[\ ]}=1$, and the expressions  $C[b_1,b_2,\ldots,b_{-1}]$ and $C[0]$ both denote  two disjoint copies of the unknot and hence $V_{[b_1,b_2,\ldots,b_{-1}]}=V_{[0]}=-t^{-1/2}-t^{1/2}$.
Recall the definition of the type sequence $\type\cfb$ in Definition~\ref{defsignb}. We will use the notation $\type\cfb=(\ldots,-)$ to indicate that the last entry of the type sequence is a minus sign.
\begin{lem}
\label{lem 4} Let $\cfa=p/q$ be a positive continued fraction and $\cfb=p/q$ or $p/(p-q)$ an even continued fraction and suppose that $m\ge 1$. So the two 2-bridge links $C\cfa$ and $C\cfb$ are isotopic. Let $V_{\cfa}$ or $V_{\cfb}$ denote their Jones polynomials. Thus $V_{\cfa}=V_{\cfb}$. Then

\[V_{\cfb}=\left\{\begin{array}{ll}
 t^{-2}\,V_{[b_1,b_2,\ldots, \,\sgn(b_{m})(|b_{m}|-2)\,]} +\ze\,V_{[b_1,b_2,\ldots,b_{m-1}]} 
 &\textup{if $\type[b_1,\ldots,b_{m}]=(\ldots,-)$};\\[8pt]
 t^{\,2}\ V_{[b_1,b_2,\ldots,{ \,\sgn(b_{m})(|b_{m}|-2)\,}]} +\overline{\ze}\,V_{[b_1,b_2,\ldots,b_{m-1}]}&\textup{if $\type[b_1,\ldots,b_{m}]=(\ldots,+)$.} 
\end{array}\right.
 \]
 In particular,
 \[V_{[b_1]}=\left\{\begin{array}{ll} 
 t^{-2}\,V_{[b_1+2]}+\ze&\textup{ if $b_1<0$;}\\[8pt]
t^{\,2}\ V_{[b_1-2]}+\overline\ze&\textup{ if $b_1>0$.}
\end{array}\right.\]
%
\end{lem}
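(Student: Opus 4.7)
The plan is to apply the Jones polynomial skein relation at a crossing of the $b_m$-braid adjacent to the terminal closure arc of $C\cfb$. By Lemma \ref{lem 3} and Corollary \ref{cor 2}, the sign of every crossing in the $b_m$-braid equals $(-1)^{m+1}\sgn(b_m)$, i.e.\ the last entry of $\type\cfb$. Hence, if $\type\cfb=(\ldots,-)$ the chosen crossing plays the role of $L_-$ and one uses
\[V(L_-)=t^{-2}V(L_+)+\ze\,V(L_0),\]
while if $\type\cfb=(\ldots,+)$ it plays the role of $L_+$ and one uses
\[V(L_+)=t^{2}V(L_-)+\bar\ze\,V(L_0).\]

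The next step is the diagrammatic identification of the other two members of the skein triple. For the crossing-reversed link, the reversed crossing sits next to same-sign crossings inside the $b_m$-braid, so a Reidemeister II move cancels the reversed crossing with an adjacent original one, leaving a $b_m$-braid with $|b_m|-2$ crossings of the original sign. The result is the 2-bridge link $C[b_1,\ldots,\sgn(b_m)(|b_m|-2)]$, where the boundary cases $|b_m|=2$ are absorbed by the conventions $V_{[\,]}=1$ and $V_{[0]}=-t^{-1/2}-t^{1/2}$. For the $L_0$ smoothing, the oriented smoothing dictated by the strand orientations of Lemma \ref{lem 3} splices the chosen crossing into the terminal closure arc, and the remaining $|b_m|-1$ crossings of the $b_m$-braid are then trivialized by successive Reidemeister I and II moves against that closure arc; the net effect is to delete the whole $b_m$-braid, yielding the 2-bridge link $C[b_1,\ldots,b_{m-1}]$.

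Substituting these identifications into the skein identity for the appropriate type of $\type\cfb$ gives both displayed formulas of the lemma. The base case $V_{[b_1]}$ is the $m=1$ specialization: here $V_{[b_1,\ldots,b_{m-1}]}=V_{[\,]}=1$, so the $\ze$ or $\bar\ze$ term collapses to a pure scalar, and $\sgn(b_1)(|b_1|-2)$ rewrites as $b_1+2$ when $b_1<0$ and as $b_1-2$ when $b_1>0$, yielding the two displayed expressions for $V_{[b_1]}$.

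The main obstacle is the clean justification of the $L_0$ identification. A naive crossing count would leave $|b_m|-1$ crossings in the $b_m$-braid, and the collapse to zero crossings genuinely uses both the local geometry of the terminal closure arc and the orientation data of Lemma \ref{lem 3}. The argument splits into four subcases according to the parity of $m$ and the sign of $b_m$, matching the four subcases of Lemma \ref{lem 3}; in each case a short diagrammatic check via Reidemeister moves around the closure arc completes the proof.
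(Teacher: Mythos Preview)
Your proof is correct and follows essentially the same approach as the paper: apply the skein relation at the last crossing of the $b_m$-braid, use Corollary~\ref{cor 2} (via Lemma~\ref{lem 3}) to determine its sign, and then identify the two other links in the skein triple as $C[b_1,\ldots,\sgn(b_m)(|b_m|-2)]$ and $C[b_1,\ldots,b_{m-1}]$. The paper is terser---it dispatches the $L_0$ identification by referring to Figure~\ref{smoothing} and the phrase ``the last braid becomes trivial''---whereas you spell out the Reidemeister moves; in fact only Reidemeister~I moves (pulling the cap through the remaining $|b_m|-1$ crossings) are needed for $L_0$, so your mention of R2 there is harmless but superfluous.
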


\begin{proof} In type $(\ldots,-)$,
the sign of the last crossing in $C\cfb$ is negative. Therefore the relation $V(L_-)= t^{-2}\,V(L_+) +\ze\,V(L_0)$ applied to the last crossing yields the identity
 \[V_{\cfb} = t^{-2} \, V_{[b_1,b_2,\ldots,{ \,\textup{sign}(b_{m})(|b_{m}|-2)\,}]} + \ze\,V_
{[b_1,b_2,\ldots,b_{m-1}]}.\]
Indeed, by Lemma \ref{lem 3} the last crossing is \scalebox{0.4}{} or  \scalebox{0.4}{}. Therefore after the smoothing of this crossing, which leads to $L_0$, the last braid  becomes trivial, see Figure \ref{smoothing}.  
\begin{figure}
\begin{center}
 \huge \scalebox{0.5}{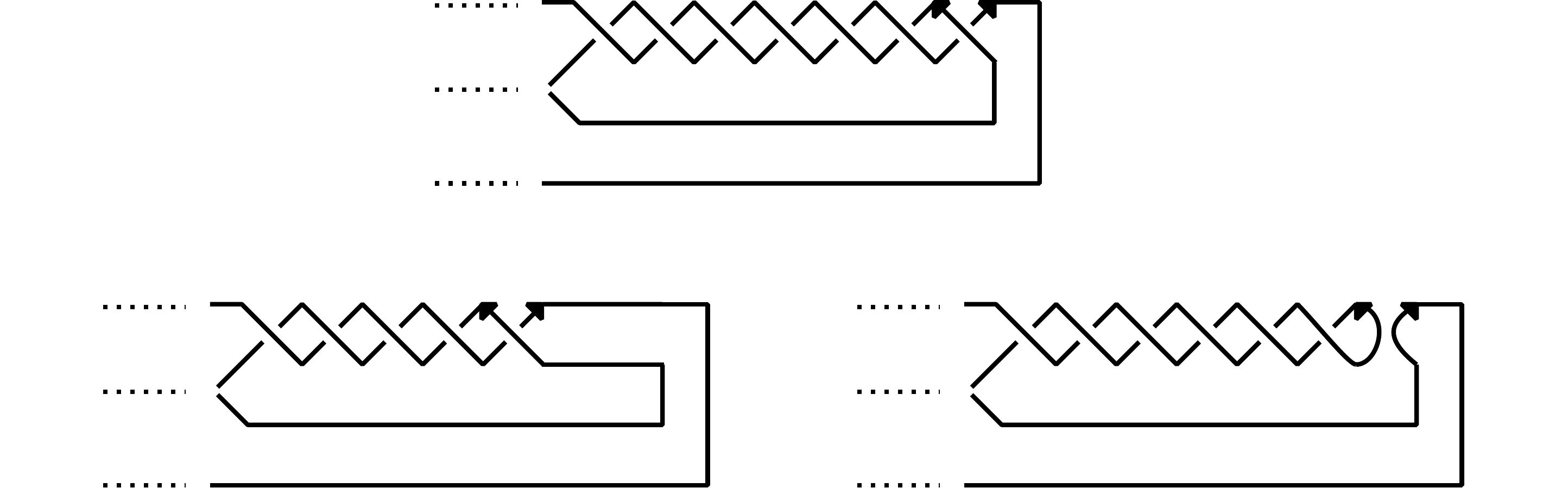}
 \caption{Proof of Lemma \ref{lem 4}}
 \label{smoothing}
\end{center}
\end{figure}

In type $(\ldots,+)$, the last crossing has positive sign and we use the analogous argument with the smoothing relation 
$V(L_+)= t^{2}\,V(L_-) +\overline\ze\,V(L_0)$.
\end{proof}

Applying the lemma $\frac{1}{2}|b_m|$ times, we obtain the recursive formula of the following theorem. In its statement, we will use the notation $[b]_q=1+q+q^2+\cdots +q^{b-1}=(1-q^b)/(1-q)$ for the $q$-analogue of a positive integer $b$. Setting $q=-t^{-1}$ and $\overline{q}=-t$, this gives the short hands
\[[b]_q=1-t^{-1}+t^{-2} -t^{-3}+\cdots \pm t^{-b+1} \qquad\textup{ and }\qquad [b]_{\overline{q}}=1-t^{1}+t^{2} -t^{3}+\cdots \pm t^{b-1}.\]
\begin{thm}
 \label{thm 5} Let $\cfb$ be an even continued fraction with $m\ge1$. Then
\[V_{\cfb}= 
 \left\{
\begin{array}
 {ll}
 t^{-|b_m|}\,V_{[b_1,b_2,\ldots,b_{m-2}]} -t^{-\frac{1}{2}}[\,|b_m|\,]_q\,V_{[b_1,b_2,\ldots,b_{m-1}]} 
&\textup{if $\type[b_1,\ldots,b_{m}]=(\ldots,-)$:}\\[8pt]
t^{|b_m|}\ V_{[b_1,b_2,\ldots,b_{m-2}]} -\ t^{\frac{1}{2}}\ [\,|b_m|\,]_{\overline {q}}\,V_{[b_1,b_2,\ldots,b_{m-1}]} 
&\textup{if $\type[b_1,\ldots,b_{m}]=(\ldots,+)$.}\\
\end{array}\right.
 \]
  In particular, 
  \[V_{[b_1]}= 
 \left\{
\begin{array}
 {ll}
 t^{b_1}(-t^{-1/2}-t^{1/2})-t^{-\frac{1}{2}}\,[-b_1]_q, 
  &\textup{if $b_1<0$;}\\
  t^{b_1}(-t^{-1/2}-t^{1/2})-t^{\frac{1}{2}}\,[b_1]_{\overline q}
  &\textup{if $b_1>0$.}\\
\end{array}\right.
\]
\end{thm}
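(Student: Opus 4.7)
The plan is to iterate Lemma~\ref{lem 4} exactly $|b_m|/2$ times, peeling off crossings from the last braid two at a time. Applying the type $(\ldots,-)$ recursion to $[b_1,\ldots,b_{m-1},\sgn(b_m)\ell]$ replaces $\ell$ by $\ell-2$ while leaving $\sgn(b_m)$ unchanged, so the last entry of the type sequence is preserved throughout the iteration and the same case of Lemma~\ref{lem 4} is applicable at every step. Telescoping these $|b_m|/2$ applications, noting that each intermediate step contributes the same term $\zeta V_{[b_1,\ldots,b_{m-1}]}$ weighted by an increasing power of $t^{-2}$, yields
\[ V_{\cfb} \;=\; t^{-|b_m|}\, V_{[b_1,\ldots,b_{m-1},0]} \;+\; \zeta\, \bigl(1+t^{-2}+t^{-4}+\cdots+t^{-(|b_m|-2)}\bigr)\, V_{[b_1,\ldots,b_{m-1}]}. \]

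Two simplifications then have to be carried out. First I would argue that $V_{[b_1,\ldots,b_{m-1},0]} = V_{[b_1,\ldots,b_{m-2}]}$: the cases $m=1$ and $m=2$ hold on the nose from the stated conventions $V_{[0]}=V_{[b_1,\ldots,b_{-1}]}$ and $V_{[b_1,0]}=V_{[\,]}=1$, and for $m\ge 3$ I would give a short diagrammatic argument that a trivial last braid in the $C\cfb$ diagram can be absorbed into the closure, producing the link $C[b_1,\ldots,b_{m-2}]$. Second, setting $q=-t^{-1}$ and using that $|b_m|$ is even (so $(-q)^{|b_m|}=q^{|b_m|}$), I would factor $\zeta = -t^{-1/2}(1-t^{-1})$ and simplify
\[ \zeta \cdot \frac{1-t^{-|b_m|}}{1-t^{-2}} \;=\; -t^{-1/2}\cdot\frac{1-q^{|b_m|}}{1-q} \;=\; -t^{-1/2}\,[\,|b_m|\,]_q, \]
which yields the desired expression in the type $(\ldots,-)$ case.

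The type $(\ldots,+)$ case will be entirely symmetric: iterating the second variant $V(L_+)=t^2 V(L_-) + \overline{\zeta} V(L_0)$ of Lemma~\ref{lem 4}, with $\overline{\zeta}=t(t^{1/2}-t^{-1/2})$ and $\overline{q}=-t$, produces the analogous formula with coefficient $-t^{1/2}\,[\,|b_m|\,]_{\overline{q}}$ after the same geometric-series manipulation.

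Finally, the particular-case formulas for $V_{[b_1]}$ fall out as the specialization $m=1$ of the two main identities, upon substituting $V_{[\,]}=1$ and $V_{[b_1,\ldots,b_{-1}]}=V_{[0]}=-t^{-1/2}-t^{1/2}$ together with $-|b_1|=b_1$ when $b_1<0$ (respectively $|b_1|=b_1$ when $b_1>0$). I expect the only mildly delicate point to be the identification $V_{[b_1,\ldots,b_{m-1},0]} = V_{[b_1,\ldots,b_{m-2}]}$ for $m\ge 3$; everything else reduces to the routine geometric-series computation displayed above.
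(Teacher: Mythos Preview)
Your proposal is correct and follows essentially the same route as the paper: iterate Lemma~\ref{lem 4} exactly $|b_m|/2$ times, use $C[b_1,\ldots,b_{m-1},0]=C[b_1,\ldots,b_{m-2}]$, and simplify the resulting geometric series via $\ze(1+t^{-2}+\cdots+t^{-|b_m|+2})=-t^{-1/2}[\,|b_m|\,]_q$ (and its barred analogue). Your observation that the last entry of the type sequence is preserved during the iteration, and your explicit treatment of the identification $V_{[b_1,\ldots,b_{m-1},0]}=V_{[b_1,\ldots,b_{m-2}]}$, are useful details that the paper leaves implicit.
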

%
%

\begin{proof}
 This follows simply by applying Lemma \ref{lem 4} exactly $b_m/2$ times and using the facts that 
 \[\ze(1+t^{-2}+t^{-4}+\cdots+t^{-|b_m| +2}) = -t^{-\frac{1}{2}}[\,|b_m|\,]_q ,
 \] 
 \[
 \overline{\ze}(1+t^{2}+t^{4}+\cdots+t^{|b_m| -2}) = -t^{\frac{1}{2}}[\,|b_m|\,]_{\overline{q}},\] and
 $C[b_1,b_2,\ldots,b_{m-1},0]=C[b_1,b_2,\ldots,b_{m-2}]$.  

 For  the case where $m=1$, we also need to observe that $C{[0]}$ is a disjoint union of two unknots, and thus $V_{[0]}=-t^{-1/2}-t^{1/2}$, and $C[\ ]$ is the unknot, hence $V_{[\ ]}=1$
\end{proof}

In the remainder of the section, we compute the degree and the sign of the leading term of the Jones polynomial.  The degree of a Laurent polynomial in $t^{1/2}$ is the highest exponent of $t$ that appears, and the leading term is the term that realizes the degree. For example, the degree of $t^{-3/2}-t^{-1/2}$ is $-1/2$ and the leading term is $-t^{-1/2}$.

For simplicity, we shall use the following notation for $i=0,1,2,\ldots, m-1$.
\[\begin{array}
 {cccccc} 
 V_i=V_{[b_1,b_2,\ldots,b_{m-i}]} & \quad & j_i= \deg V_i& \quad & \zd_i t^{j_i}= \textup{leading term of } V_i. 
\end{array}
\]

\begin{cor} 
 \label{cor 6} With the notation above
 \[j_0 \, \le \left\{
\begin{array}
 {ll} 
 \max (j_2 -|b_m| \ , \ j_1 -\textstyle\frac{1}{2} ) &\textup{if $\type[b_1,\ldots,b_{m}]=(\ldots,-)$; }\\ \\
\max (j_2 +|b_m | \ , \ j_1 -\textstyle\frac{1}{2}+|b_m|)
&\textup{if $\type[b_1,\ldots,b_{m}]=(\ldots,+)$.}
\end{array}\right. \]
 with equality if $j_2 -|b_m| \ne  j_1 -\textstyle\frac{1}{2}$, respectively $j_2 +|b_m | \ne j_1 -\textstyle\frac{1}{2}+|b_m|$.
\end{cor}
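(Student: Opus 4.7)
The proof is a direct computation from the recursive formula in Theorem \ref{thm 5}. The plan is to read off the degree of each of the two summands on the right-hand side of that recursion and then apply the elementary fact that $\deg(A+B) \le \max(\deg A, \deg B)$, with equality whenever $\deg A \neq \deg B$ (so there can be no cancellation of the leading term).

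First I would handle the type $(\ldots,-)$ case. By Theorem \ref{thm 5},
\[
V_0 \;=\; t^{-|b_m|} V_2 \;-\; t^{-1/2}\,[\,|b_m|\,]_q\, V_1.
\]
Since $[\,|b_m|\,]_q = 1 - t^{-1} + t^{-2} - \cdots \pm t^{-|b_m|+1}$, its top-degree term is the constant $1$, so $\deg [\,|b_m|\,]_q = 0$. Therefore the two summands on the right have degrees $j_2 - |b_m|$ and $j_1 - \tfrac{1}{2}$ respectively, and the stated inequality (with the stated equality condition) follows immediately.

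For the type $(\ldots,+)$ case, Theorem \ref{thm 5} gives
\[
V_0 \;=\; t^{|b_m|} V_2 \;-\; t^{1/2}\,[\,|b_m|\,]_{\overline q}\, V_1.
\]
Here $[\,|b_m|\,]_{\overline q} = 1 - t + t^2 - \cdots \pm t^{|b_m|-1}$, whose top-degree term is $\pm t^{|b_m|-1}$, so $\deg [\,|b_m|\,]_{\overline q} = |b_m|-1$. Hence the two summands have degrees $j_2 + |b_m|$ and $j_1 - \tfrac{1}{2} + |b_m|$, and again the claimed bound with equality condition follows from the $\max$-rule for degrees.

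There is essentially no obstacle: the only thing to be careful about is recording the degrees of the two $q$-integers correctly in each of the two cases, since they behave oppositely (the $q$-version has degree $0$ in the top coefficient, while the $\overline q$-version has degree $|b_m|-1$). Note that the corollary only claims inequality in the case of a tie; it does not attempt to track the sign $\zd_0$ in the event of cancellation, which will be the subject of later results.
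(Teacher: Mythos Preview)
Your proof is correct and follows exactly the same approach as the paper: apply Theorem~\ref{thm 5}, read off the degree of each summand using $\deg[\,|b_m|\,]_q=0$ and $\deg[\,|b_m|\,]_{\overline q}=|b_m|-1$, and invoke the $\max$-rule for the degree of a sum. (The paper's one-line proof in fact records $\deg[b]_q=1$, which appears to be a typo; your value $0$ is the one consistent with the stated bound $j_1-\tfrac12$.)
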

\begin{proof}
 This follows directly from Theorem \ref{thm 5} and the fact that $\deg[b]_q =1$ and $\deg [b]_{\overline {q}}=b-1$, for $b>0$.
 \end{proof}

\begin{cor}
 \label{cor 7}
The degrees $j_0,j_1 $ and $j_2$ compare as follows accoding to the type of $\cfb$.
 \[
\begin{array}
 {rcl} j_0 &=&j_1+ \left\{
\begin{array}
 {ll} 
 -\frac{1}{2} &\textup{in type }(\ldots,-);\\[8pt]
  |b_m|+\frac{1}{2}\qquad&\textup{in type }(\ldots,-,+);\\[8pt]
 |b_m|-\frac{1}{2}&\textup{in type }(\ldots,+,+);\\[8pt]
\end{array}
\right.\\
\\
j_0&=&j_2+\left\{
\begin{array}
 {ll} 
  -1 &\textup{in type }(\ldots,-,-);\\[8pt]
   |b_{m-1}|   &\textup{in type }(\ldots,-,+,-);\\[8pt] 
   |b_{m-1}| - 1 &\textup{in type }(\ldots,+,+,-);\\[8pt]
   |b_{m}|   &\textup{in type }(\ldots,-,+);\\[8pt] 
   |b_{m}|  + |b_{m-1}|  &\textup{in type }(\dots,-,+,+);\\[8pt] 
   |b_{m}|  + |b_{m-1}| -1 &\textup{in type }(\dots,+,+,+).\\[8pt] 
\end{array}
\right.
\end{array}\]
%
%
\end{cor}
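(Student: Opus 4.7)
The plan is to prove both formulas simultaneously by strong induction on $m$. One applies Theorem~\ref{thm 5} to rewrite $V_0$ as a two-term sum and then uses the inductive hypothesis, applied to $V_1$, to identify which summand dominates in degree.

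Theorem~\ref{thm 5} gives two cases. In type $(\ldots,-)$ one has
\[V_0 \;=\; t^{-|b_m|}\,V_2 \;-\; t^{-1/2}[\,|b_m|\,]_q\,V_1,\]
whose two summands have leading-term degrees $j_2-|b_m|$ and $j_1-\tfrac12$; the latter because the leading term of $[\,|b_m|\,]_q$ is the constant $1$. In type $(\ldots,+)$ one has
\[V_0 \;=\; t^{|b_m|}\,V_2 \;-\; t^{1/2}[\,|b_m|\,]_{\overline q}\,V_1,\]
whose summands have degrees $j_2+|b_m|$ and $j_1+|b_m|-\tfrac12$, since $[\,|b_m|\,]_{\overline q}$ has degree $|b_m|-1$ with positive leading coefficient ($|b_m|$ being even). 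By Corollary~\ref{cor 6}, $j_0$ equals the larger of the two candidates provided they differ.

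The inductive hypothesis, applied to $V_1$ (whose type $T_1$ is $T_0=\type\cfb$ with its last symbol deleted), produces $j_1-j_2$ from the last symbol of $T_1$: if $T_1$ ends in $-$ then $j_1=j_2-\tfrac12$, while if $T_1$ ends in $+$ then $j_1-j_2\in\{|b_{m-1}|+\tfrac12,\,|b_{m-1}|-\tfrac12\}\ge\tfrac32$. Plugging these relations into the two degree candidates above, one checks in each of the six final-symbol combinations that the candidates are distinct and one strictly dominates. For example, in type $(\ldots,-,-)$ the $V_1$-summand wins and yields $j_0=j_1-\tfrac12=j_2-1$; in type $(\ldots,-,+)$ the $V_2$-summand wins and yields $j_0=j_2+|b_m|$, whence $j_0-j_1=|b_m|+\tfrac12$; and analogously in the remaining four cases, with the $|b_{m-1}|$-terms arising from re-applying the first tabulation to $V_1$ whenever $T_1$ ends in $+$. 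Reading off $j_0-j_1$ and $j_0-j_2$ in all six cases reproduces exactly the two displayed tables.

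The base cases $m=1,2$ are verified by direct computation from the explicit formulas at the end of Theorem~\ref{thm 5}, together with the conventions $V_{[\,]}=1$ and $V_{[0]}=-t^{-1/2}-t^{1/2}$; in the degenerate situation these contribute $j_1=0$ and $j_2=\tfrac12$, and indeed $j_1=j_2-\tfrac12$, which is exactly what the induction step needs in order to treat the ``missing'' type symbols as if $T_1$ ended in $-$. The principal obstacle I expect is bookkeeping: the two tables must be established simultaneously so that the inductive hypothesis on $V_1$ is strong enough to cover whichever ending $T_1$ has, and in every sub-case one must confirm that the dominating degree is \emph{strictly} larger than the other, using $|b_m|,|b_{m-1}|\ge 2$, in order to rule out a cancellation of leading terms and guarantee that the maximum predicted by Corollary~\ref{cor 6} is actually attained.
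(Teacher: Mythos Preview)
Your proposal is correct and follows essentially the same route as the paper: induction on $m$, use Theorem~\ref{thm 5} to express $V_0$ as a two-term sum, apply the inductive hypothesis to $V_1$ to obtain $j_1-j_2$, and then use Corollary~\ref{cor 6} together with $|b_i|\ge 2$ to decide which summand carries the leading term in each type. The only cosmetic difference is that the paper handles just $m=1$ as a base case (your observation that the conventions force $j_1=j_2-\tfrac12$ there, so the ``missing'' type symbol behaves like a $-$, is exactly what makes this suffice), whereas you also mention $m=2$; this is harmless redundancy.
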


\begin{proof}
We use induction on $m$.  If $m=1$, we have  $j_1=\deg V_{\textup{unknot}}=0$ and $j_2=\deg (-t^{-1/2}-t^{1/2})=1/2$.  On the other hand, Theorem \ref{thm 5}  implies
\[j_0=\left\{
\begin{array}
 {ll}
 -\frac{1}{2} &\textup{if $b_1<0$};\\
 b_1+\frac{1}{2} &\textup{if $b_1>0$}.
\end{array}\right.\]
Therefore 
\[j_0=\left\{
\begin{array}
 {ll}
 j_1-\frac{1}{2} &\textup{if $b_1<0$};\\
 j_1+b_1+\frac{1}{2} &\textup{if $b_1>0$},
\end{array}\right.
\qquad \textup{and} \qquad 
j_0=\left\{
\begin{array}
 {ll}
 j_2-1 &\textup{if $b_1<0$};\\
 j_2+b_1 &\textup{if $b_1>0$}.
\end{array}\right.
\]
This shows the result for $m=1$.

Suppose now $m>1$. By induction, we have 
\begin{equation}
 \label{eq 58}
 j_1=j_2+\left\{
\begin{array}
 {ll}
 -\frac{1}{2} &\textup{if $\type[b_1,\ldots,b_{m-1}]=(\ldots,-)$};\\[5pt]
 |b_{m-1}|+\frac{1}{2} &\textup{if $\type[b_1,\ldots,b_{m-1}]=(\ldots,-,+)$};\\[5pt]
 |b_{m-1}|-\frac{1}{2} &\textup{if $\type[b_1,\ldots,b_{m-1}]=(\ldots,+,+)$}.\\
 \end{array}\right.
\end{equation}
In particular, $j_1\ge j_2- 1/2$, whence $j_1-1/2>j_2-|b_m|$, since $b_m$ is a nonzero even integer. Therefore Corollary \ref{cor 6} implies that, if $\type\cfb=(\ldots,-)$, then $j_0=j_1-1/2,$ and the three cases in equation (\ref{eq 58}) prove the three cases of statement where the type ends in a minus sign. 

Next suppose that $\type\cfb=(\ldots,+,+)$. In this case, equation (\ref{eq 58}) implies that  $j_1=j_2+|b_{m-1}| \pm 1/2>j_2+1$. Then Corollary \ref{cor 6} yields
$j_0=j_1-1/2+|b_m|=j_2+|b_{m}|+|b_{m-1}]-\frac{1}{2}\pm\frac{1}{2}.$

Finally, suppose that $\type\cfb=(\ldots,-,+)$.  Then equation (\ref{eq 58}) implies that  $j_1=j_2-1/2$ and thus $j_1-1/2<j_2$. In this case, Corollary \ref{cor 6}   yields $j_0=j_2+|b_m|=j_1+|b_m|+1/2$.
%
\end{proof}

Corollary \ref{cor 7} allows us to determine which of the two polynomials on the right hand side of the equations in Theorem \ref{thm 5} contains the leading term. Indeed, if $\cfb$ is of type $(\ldots,-)$, the equation is 
\[V_0= 
 t^{-|b_m|}\,V_2 -t^{-\frac{1}{2}}[\,|b_m|\,]_q\,V_1. 
\]
The leading term of the first polynomial on the right hand side has sign $\zd_2$ and its degree $j_2-|b_m|$ is strictly smaller than $j_2-1\le j_0$, by Corollary \ref{cor 7}. The leading term of the second polynomial has sign $-\zd_1$ and its degree is 
 $j_1-\frac{1}{2}=j_0$.
Therefore the leading term of $V_0$ is $\zd \,t^{j_0}=-\zd_1\,t^{j_1-\frac{1}{2}}$.
 
 On the other hand, if $\cfb$ is of type $(\ldots,+)$, the equation in Theorem \ref{thm 5} is 
\[V_0= 
 t^{|b_m|}\,V_2 -t^{\frac{1}{2}}[\,|b_m|\,]_{\overline q}\,V_1. 
\]
The first term on the right hand side has sign $\zd_2$ and its degree $j_2+|b_m|$ is equal to $j_0$ in type $(\ldots,-,+)$. The second term has sign $\zd_1$ (since $b_m$ is even) and its degree is 
$j_1+ |b_m| -\frac{1}{2}$ which is equal to $j_0$ in type $(\ldots,+,+)$.
This leads us to the following corollary.

\begin{cor}\label{cor 8}
The leading terms satisfy
\begin{equation}\label{eq 59} \zd_0\,t^{j_0}=\left\{\begin{array}{ll}
-\zd_1\,t^{j_1-\frac{1}{2}} &\textup{ in type $(\ldots,-)$;}\\[5pt]
\zd_2\,t^{j_2+|b_m|} &\textup{ in type $(\ldots,-,+)$;}\\[5pt]
\zd_1\,t^{j_1+|b_m|-\frac{1}{2} } &\textup{ in type $(\ldots,+,+)$.}\end{array}\right.\end{equation}
Moreover, the coefficients $\zd_0,\zd_1$ and $\zd_2$ compare according to the type of $\cfb$ as follows.
\[\zd_0=\left\{\begin{array}{ll}
-\zd_1 &\textup{ in type $(\ldots,-)$;}\\
-\zd_1 &\textup{ in type $(\ldots,-,+)$;}\\
\zd_1  &\textup{ in type $(\ldots,+,+)$;}\\
\end{array}\right.\qquad \textup{and}\qquad 
\zd_0=\left\{\begin{array}{ll}
\zd_2 &\textup{ in type $(\ldots,-,-)$;}\\
\zd_2  &\textup{ in type $(\ldots,-,+,-)$;}\\
-\zd_2 &\textup{ in type $(\ldots,+,+,-)$;}\\
\zd_2 &\textup{ in type $(\ldots,-,+)$;}\\
-\zd_2  &\textup{ in type $(\ldots,-,+,+)$;}\\
\zd_2 &\textup{ in type $(\ldots,+,+,+)$.}\\
\end{array}\right.\]
\end{cor}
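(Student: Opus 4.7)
The plan is to follow the blueprint already laid out in the paragraph preceding the corollary: feed the recursion of Theorem~\ref{thm 5} into the degree information of Corollary~\ref{cor 7}, identify the dominant summand on the right-hand side, and read off its sign. Beyond this there are only two additional inputs I will need: the leading term of $[b]_q$ is $1$ (so of $t$-degree $0$ with sign $+1$), while the leading term of $[b]_{\overline q}$ with $b$ even is $-t^{\,b-1}$ (the parity of $b-1$ being odd is what fixes the sign). Both follow from $q=-t^{-1}$, $\overline{q}=-t$ and the definition $[b]_q=1+q+\cdots+q^{b-1}$.

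For the three identities in the first block I would argue as follows. In type $(\ldots,-)$, Theorem~\ref{thm 5} reads $V_0=t^{-|b_m|}V_2-t^{-1/2}[|b_m|]_q V_1$; the second summand has leading term $-\zd_1 t^{j_1-1/2}$ of degree $j_0$ by Corollary~\ref{cor 7}, while the first has strictly smaller degree $j_2-|b_m|$ (as already noted in the text before the corollary), so $\zd_0=-\zd_1$. In types $(\ldots,-,+)$ and $(\ldots,+,+)$ the recursion is $V_0=t^{|b_m|}V_2-t^{1/2}[|b_m|]_{\overline q}V_1$; the first summand has leading term $\zd_2 t^{j_2+|b_m|}$, while the second has leading term $\zd_1 t^{j_1+|b_m|-1/2}$, where the sign stays $+\zd_1$ because the minus sign in front of $t^{1/2}[|b_m|]_{\overline q}$ cancels the $-t^{|b_m|-1}$ leading term of $[|b_m|]_{\overline q}$ (this is where $|b_m|$ even is crucial). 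Matching against Corollary~\ref{cor 7} shows the $V_2$-summand dominates in type $(\ldots,-,+)$, yielding $\zd_0=\zd_2$, and the $V_1$-summand dominates in type $(\ldots,+,+)$, yielding $\zd_0=\zd_1$. To turn the $(\ldots,-,+)$ conclusion $\zd_0=\zd_2$ into the stated form $\zd_0=-\zd_1$, I invoke the induction hypothesis for $V_1$, whose truncated type is $(\ldots,-)$ and hence satisfies $\zd_1=-\zd_2$.

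The second block is then a direct composition of the first block with itself: I relate $\zd_0$ to $\zd_1$ using the type of $\cfb$ and then $\zd_1$ to $\zd_2$ using the type of $[b_1,\ldots,b_{m-1}]$, which is obtained from $\type\cfb$ by deleting the last entry. As a sample, type $(\ldots,+,+,-)$ gives $\zd_0=-\zd_1$ from the $(\ldots,-)$ rule and $\zd_1=\zd_2$ from the $(\ldots,+,+)$ rule applied to $V_1$, so $\zd_0=-\zd_2$; the remaining five type patterns are checked identically. The induction on $m$ grounds at $m=1$, where $V_{[b_1]}$ is computed explicitly in Theorem~\ref{thm 5} and the conventions $V_{[\ ]}=1$, $V_{[0]}=-t^{-1/2}-t^{1/2}$ ensure the leading coefficients match.

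The main obstacle is purely bookkeeping: keeping straight the sign of the leading term of $[b]_{\overline q}$ (which depends on the parity of $b$, and is used here in the even-$b$ case), and tracking whether $V_1$ or $V_2$ supplies the top-degree term in each of the three type cases of the first block. Once those two observations are organized, every case in the statement reduces to a one-line verification.
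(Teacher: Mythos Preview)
Your proposal is correct and follows essentially the same approach as the paper's own proof: identify the dominant summand in the recursion of Theorem~\ref{thm 5} via the degree comparisons of Corollary~\ref{cor 7}, read off the sign (noting that the leading term of $[\,|b_m|\,]_{\overline q}$ is $-t^{|b_m|-1}$ because $|b_m|$ is even), and then compose the $\zd_0$--$\zd_1$ relation with the $\zd_1$--$\zd_2$ relation applied to the shorter continued fraction. Your explicit framing as an induction on $m$ is if anything slightly more careful than the paper, which leaves that step implicit.
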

 
\begin{proof}
 It only remains to show the last equations comparing $\zd_0$ with $\zd_1$ and $\zd_2$.
 Two of the three equations comparing $\zd_0 $ and $\zd_1$ follow directly from
equation (\ref{eq 59}). In the remaining equation, the type is $(\ldots,-,+)$ and equation (\ref{eq 59}) yields $\zd_0=\zd_2$ as well as $\zd_1=-\zd_2$, because $\type[b_1,\ldots,b_{m-1}]=(\ldots,-)$. Thus $\zd_0=-\zd_1$.

Now consider the equations comparing $\zd_0$ and $\zd_2.$ In type $(\ldots,-,-)$ we have $\zd_0=-\zd_1$ and $\zd_1=-\zd_2$, thus $\zd_0=\zd_2$.
In type $(\ldots,+,-)$ we have $\zd_0=-\zd_1$, wheras  $\zd_1=-\zd_2$ in type $(\ldots,-,+,-) $ and  $\zd_1=\zd_2$ in type $(\ldots,+,+,-) $. Thus $\zd_0=\zd_2$ in the former case and $\zd_0=-\zd_2$ in the latter.
 
 In type $(\ldots,-,+)$ we have $\zd_0=-\zd_1$ and $\zd_1=-\zd_2$, thus $\zd_0=\zd_2$.
 In type $(\ldots,+,+)$ we have $\zd_0=\zd_1$, wheras  $\zd_1=-\zd_2$ in type $(\ldots,-,+,+) $ and  $\zd_1=\zd_2$ in type $(\ldots,+,+,+) $. Thus $\zd_0=-\zd_2$ in the former case and $\zd_0=\zd_2$ in the latter.
This completes the proof.
\end{proof}

 We close this section with a direct formula for the degree of the Jones polynomial.

\begin{thm}
 \label{thm degree}
 Let $\cfb$ be an even continued fraction. Then the Jones polynomial $V_{\cfb}$ of the associated 2-bridge knot has degree
 \[ \sum_{i=1}^m  \max\left( (-1)^{i+1} b_i + \frac{\textup{sign}(b_ib_{i-1})}{2}\ ,\ -\frac{1}{2}\right),\]
 where we use the convention that $\textup{sign}(b_0)=1$. 
  Moreover, the sign of its leading term is equal to $(-1)^{m-\tau}$, where $\tau$ is the number of times the subsequence $+,+$ appears in the type sequence of $\cfb$.
\end{thm}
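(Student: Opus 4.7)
The plan is to prove both the degree formula and the sign formula by induction on $m$, leveraging the two recursive comparisons already established: Corollary \ref{cor 7} for the degree $j_0$ versus $j_1$, and Corollary \ref{cor 8} for the leading sign $\zd_0$ versus $\zd_1$. Both corollaries split into the three mutually exclusive cases determined by the last one or two entries of $\type\cfb$, namely $(\ldots,-)$, $(\ldots,-,+)$, and $(\ldots,+,+)$, and these are exactly the cases I will match against the claimed formula.

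The essential bookkeeping step is to rewrite the $i$-th summand
$T_i := \max\bigl((-1)^{i+1} b_i + \tfrac{1}{2}\sgn(b_i b_{i-1}),\,-\tfrac{1}{2}\bigr)$
in terms of the type sequence. If the $i$-th type $(-1)^{i+1}\sgn(b_i)$ is $-$, then $(-1)^{i+1} b_i\le -2$, so the first argument of the max is at most $-3/2$ and $T_i=-1/2$. If the $i$-th type is $+$, then $(-1)^{i+1}b_i=|b_i|\ge 2$, and one checks from the defining identity $(-1)^{i+1}\sgn(b_i)=+$ that $\sgn(b_i b_{i-1})=+1$ when the $(i{-}1)$-th type is $-$ and $\sgn(b_i b_{i-1})=-1$ when it is $+$; hence $T_i=|b_i|+1/2$ in the first subcase and $T_i=|b_i|-1/2$ in the second. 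The convention $\sgn(b_0)=1$ precisely mimics a phantom type entry $-$ at position $0$, which makes the $i=1$ case a specialization of the same rule. Comparing these three values of $T_m$ with the three cases of $j_0-j_1$ in Corollary \ref{cor 7} produces an exact match, so the claimed sum satisfies the same one-step recursion as $j_0$; together with the base case $m=1$ (verified directly from Theorem \ref{thm 5}, which gives $j_0=b_1+1/2$ if $b_1>0$ and $j_0=-1/2$ if $b_1<0$), this settles the degree formula.

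For the sign I run the parallel induction using Corollary \ref{cor 8}: $\zd_0=-\zd_1$ in types $(\ldots,-)$ and $(\ldots,-,+)$, and $\zd_0=\zd_1$ in type $(\ldots,+,+)$. The counter $\tau$ is unchanged in the first two cases and increases by $1$ in the third, so assuming inductively $\zd_1=(-1)^{(m-1)-\tau_{m-1}}$ we get $\zd_0=(-1)^{m-\tau_m}$ in all three cases. Theorem \ref{thm 5} again furnishes the base case $\zd_1=-1$ for $m=1$, matching $(-1)^{1-0}=-1$. The main technical obstacle is the careful sign-chasing in the case analysis of $T_m$; once the match with Corollary \ref{cor 7} is established, the rest of the argument is a straightforward two-line induction for each formula.
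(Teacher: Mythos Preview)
Your proposal is correct and follows essentially the same approach as the paper's proof: induction on $m$ with the base case read off from Theorem~\ref{thm 5}, and the inductive step for the degree matched case-by-case against Corollary~\ref{cor 7} while the sign is tracked via Corollary~\ref{cor 8}. Your explicit computation of $\sgn(b_ib_{i-1})$ in terms of the $(i{-}1)$-th and $i$-th type entries, and the observation that the convention $\sgn(b_0)=1$ amounts to a phantom type $-$ at position $0$, make the case analysis slightly more transparent than in the paper, but the argument is the same.
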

\begin{proof} If $m=1$, then Theorem \ref{thm 5} implies  that the leading term of $V_{b_1}$ is
\[
\left\{
\begin{array}
 {ll}-t^{ b_1+\frac{1}{2}} &\textup{if $b_1>0$;}\\
 -t^{-\frac{1}{2}} &\textup{if $b_1<0$,}
\end{array}\right. \]
so the degree is equal to
$ \max( b_1 + \frac{\textup{sign}(b_1)}{2}\ ,\ -\frac{1}{2})$ and the sign is equal to $(-1)=(-1)^m$.

Now suppose that $m>1$. Corollary \ref{cor 7} implies
\[\deg V_{\cfb}=j_0=j_1 +\left\{
\begin{array}
 {ll}
 -\frac{1}{2}  &\textup{in type $(\ldots,-)$;}\\
 |b_m| +\frac{\textup{sign}(b_m b_{m-1})}{2} &\textup{in type $(\ldots,+)$.}\\
\end{array}\right. \]
By induction, we may assume that $j_1 $ is equal to the sum of the first $m-1$ terms in the theorem. Thus we must show that the $m$-th term satisfies
 \[   \max\left( (-1)^{m+1} b_m + \frac{\textup{sign}(b_mb_{m-1})}{2}\ ,\ -\frac{1}{2}\right) = \left\{
\begin{array}
 {ll}
 -\frac{1}{2}  &\textup{in type $(\ldots,-)$;}\\
  |b_m|+\frac{\textup{sign}(b_m b_{m-1})}{2}  &\textup{in type $(\ldots,+)$.}\\
\end{array}\right. \]
Recall that $\type\cfb=(\ldots,-)$  if  $m$ is even and $b_m>0$, or if $m$ is odd and $b_m<0$. Therefore the maximum on the left hand side is equal to $-1/2$ in this case. 
On the other hand, if  $\type\cfb=(\ldots,+)$,   then the maximum equals $|b_m|
 +\frac{\textup{sign}(b_m b_{m-1})}{2} $.  This proves the statement about the degree. To determine the sign, we use Corollary \ref{cor 8}, which shows how the sign changes in terms of the entries at positions $2,3,\ldots,m$ in the type sequence of $\cfb$. Namely,  the sign changes for every $-$ sign in these positions of the type sequence  and for each $+$ sign that is a direct successor of a $-$ sign. In other words, the number of sign changes is precisely $m-1-\tau$. Now the result follows since the sign is $-$ in the case $m=1$. 
\end{proof}
\begin{remark}
 For each $i$, the maximum in the theorem is equal to $-1/2$ if the crossings  in the $b_i$ braid are negative and it is $|b_i|\pm 1/2$ if the crossings are positive. Thus we can express the degree in terms of the crossings of the link as follows
 \[\begin{array}{rcl}\deg V_{\cfb}&=&-\frac{1}{2}\#\,\textup{negative braids\,} +\#\,\textup{positive crossings}
 \\[8pt]
 &&+\frac{1}{2} \#\,\textup{consecutive pairs of braids with sign } (+,-)\\[8pt]
 &&-\frac{1}{2}\#\,\textup{consecutive pairs of braids with sign }(+,+).  \end{array}\]
\end{remark}
\begin{example} For the continued fraction $[2,2,-2,4]$, the formula of the theorem gives
 \[\deg V_{[2,2,-2,4]}=\left(2+\frac{1}{2}\right)-\frac{1}{2}-\frac{1}{2}-\frac{1}{2}=1,\]
 and the formula from the remark gives
 \[\deg V_{[2,2,-2,4]}=-\frac{3}{2}+2+\frac{1}{2}=1.\]
  The type sequence of $[2,2,-2,4]$ is $(+,-,-,-)$, hence $\tau=0$ and thus the sign of the 
 leading term is $(-1)^4=+1$.
  
The Jones polynomial is 
\[V_{[2,2,-2,4]}=t-2+4 t^{-1} -4 t^{-2} +5 t^{-3} -5 t^{-4} +3 t^{-5} -2 t^{-6} + t^{-7} .\]
 
\end{example}

\section{Cluster algebras and specialized $F$-polynomials}\label{sect F}
\subsection{Cluster algebras}
We recall a few facts about cluster algebras with principal coefficients. For more detailed information, we refer to the original paper \cite{FZ4} or the lecture notes \cite{S}.
Let $Q$ be a quiver without loops and oriented 2-cycles, and let $N$ denote the number of vertices of $Q$. Let $\mathbb{ZP}$ denote the ring of Laurent polynomials in variables $y_1,y_2,\ldots,y_N$ and let $\mathbb{QP}$ be its field of fractions. The {\em cluster algebra} $\cala(Q)$ of $Q$ with {\em principal coefficients}  is a $\mathbb{ZP}$-subalgebra  of the field of rational functions $\mathbb{QP}(x_1,x_2,\ldots,x_N)$. To define the cluster algebra one constructs a set of generators, the {\em cluster variables}, by a recursive method called {\em mutation}. It is known  that the cluster variables are elements of the ring $\mathbb{Z}[x_1^{\pm 1},x_2^{\pm 1},\ldots,x_N^{\pm 1},y_1,y_2,\ldots,y_N]$  with positive coefficients \cite{FZ1,FZ4,LS4}. The {\em $F$-polynomial} is the polynomial in $\mathbb{Z}[y_1,y_2,\ldots,y_N]$ obtained from the cluster variable by setting all $x_i$ equal to 1. 

If the quiver $Q$ is the adjacency quiver of a triangulation of a surface with marked points then the cluster algebra is said to be of surface type, see \cite{FST}. In this case, each cluster variable is given as a sum over all perfect matchings of the weighted snake graph associated to the cluster variable \cite{MSW}. In \cite{CS4,R} another formula was given, that computes the cluster variables as continued fractions of Laurent polynomials.

\begin{example}
 Let $Q$ be the quiver $\xymatrix{1&\ar[l]2}$. Then the `largest' cluster variable in $\cala(Q)$ is equal to $(x_2+y_1+x_1y_1y_2)/x_1x_2$ 
 and its $F$-polynomial is $1+y_1+y_1y_2$.
\end{example}

\begin{example}
 Let $Q$ be the quiver $\xymatrix{1&\ar[l]2\ar[r]&3}$. Then the `largest' cluster variable in $\cala(Q)$ is equal to $(x_2^2+x_2y_1+x_2y_3+y_1y_3+x_1x_3y_1y_2y_3)/x_1x_2x_3$ 
 and its $F$-polynomial is $1+y_1+y_3+y_1y_3+y_1y_2y_3$.
\end{example}

\subsection{Specialized $F$-polynomials} We shall show that the Jones polynomial of a 2-bridge link is equal  (up to normalization by its leading term) to the specialization of a corresponding cluster variable at $x_i=1, y_1=t^{-2}$ and $y_i=-t^{-1} $ if $i\ne 1$.

To make this statement precise, we need to fix our notation. Let $\cfa$ be a positive continued fraction with $a_1\ge 2$. As we have seen in section \ref{sect knots}, this is not a restriction from the point of view of 2-bridge knots. 
Let $\calg\cfa$ be the snake graph of the continued fraction, let $d=a_1+a_2+\cdots+a_n-1$ be the number of tiles of this graph and label the tiles $1,2,\ldots,d$. 

Let $\cala$ be any cluster algebra with principal coefficients in which we can realize the snake graph $\calg\cfa$  as the snake graph of a cluster variable, with the sole condition that the first tile of  the snake graph corresponds to the initial cluster coefficient $y_1$ and no other tile of the graph corresponds to the same coefficient $y_1$. 
Denote the initial seed (with principal coefficients) by 
$((x_1,x_2,\ldots,x_N),(y_1,y_2,\ldots,y_N),Q)$ 
where $(x_1,x_2,\ldots,x_N)$ is the initial cluster, $(y_1,y_2,\ldots,y_N) $ is the initial coefficient tuple and $Q$ is the initial quiver.

\begin{remark}
  For example, one can choose $\cala$ to be of type $\mathbb{A}_d$ and let the $i$-th tile  correspond to the $i$-th coefficient and choose the initial seed such that its quiver $Q$ is the acyclic quiver corresponding to the snake graph $\calg\cfa$. That means that 
$Q$ is of the form 
\[\xymatrix@C15pt{1&\ar[l]2&\ar[l]\cdots&\ar[l]\ell_1\ar[r]&\ell_1+1\ar[r]&\cdots\ar[r]&\ell_2&\ell_2+1\ar[l]&\ar[l]\cdots&\ar[l]\ell_3\ar[r]&\cdots}
\]
where $\ell_i=a_1+a_2+\cdots a_i$.
In this cluster algebra, the cluster variable corresponding to the continued fraction $\cfa$ is the one whose denominator is the product of all initial cluster variables $x_1x_2\ldots x_d$, or equivalently, the cluster variable that,  under the Caldero-Chapoton map, corresponds to the largest indecomposable representation of $Q$, the one with dimension 1 at every vertex.

\end{remark}

In the chosen cluster algebra $\cala$, let $x\cfa$ denote the cluster
variable whose snake graph is $\calg\cfa$ and let $F\cfa$ be its 
$F$-polynomial. Recall from \cite{FZ4} that  $F\cfa$ is obtained from $x\cfa$ by setting all initial cluster variables $x_1,x_2,\ldots x_N$ equal to 1.

It has been shown in \cite{CS4, R} that $x\cfa $ and $F\cfa$ can be written as the numerator of a continued fraction of Laurent polynomials.
We will use a specialization of the $F$-polynomial by setting 
$y_1=q^2$ and $y_i=q$, for all $i=2,3,\ldots,N$, where $q=-t^{-1}$. In other words
 \[y_1=t^{-2}  \qquad\textup{ and } \qquad y_2=y_3=\ldots=y_N=-t^{-1}.\]
 We denote this specialization by $\Fa$. Thus
 \[\Fa=F\cfa \Big|_{y_1=t^{-2};\  y_i=-t^{-1},\ i>1}.\]
We do not yet know an intrinsic reason why $y_1$ is different from $y_2,\ldots,y_N$.

Using the main result of \cite{R}, we have the following formula.
Recall that  $[b]_q=1+q+q^2+\cdots +q^{b-1}$ and $\ell_i=a_1+a_2+\cdots a_i$. Also recall that  the notation $N[\call_1,\call_2,\ldots,\call_n]$ is defined recursively in equation (\ref{def N}), where $N[\call_1]=\call_1$.
\begin{prop}
 \label{prop F} 
\begin{itemize} 
\item [{\rm(a)}] 
If $n$ is odd, 
the specialized $F$-polynomial $\Fa$ is equal to 
\[N\Big[\, [a_1+1]_q -q\ ,\  [a_2]_q \,q^{-\ell_2} ,\ [a_3]_q\,q^{\ell_2+1} ,\ [a_4]_q\, q^{-\ell_4},\ldots, [a_{2i}]_q\, q^{-\ell_{2i}},\ [a_{2i+1}]_q\, q^{\ell_{2i}+1} ,\ldots , \ [a_{n}]_q\, q^{\ell_{n-1}+1}\Big] \]

\item [{\rm(b)}] If $n$ is even, $\Fa$ is equal to the result in {\rm(a)} multiplied by  $q^{\ell_n}$.
\end{itemize}
\end{prop}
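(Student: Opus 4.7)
The plan is to start from the formula of \cite{R}, which expresses the $F$-polynomial $F\cfa$ as the numerator of a continued fraction $N[L_1(\mathbf{y}),L_2(\mathbf{y}),\ldots,L_n(\mathbf{y})]$ of Laurent polynomials in the principal coefficients $y_1,\ldots,y_N$, where each $L_i$ is determined by the $i$-th zigzag subgraph $\calg[a_i]$ together with its position within $\calg\cfa$. Concretely, the $d$ tiles of $\calg\cfa$ are partitioned into the zigzag pieces $\calg[a_1],\ldots,\calg[a_n]$ and $n-1$ connecting tiles: tiles $1,\ldots,a_1-1$ belong to $\calg[a_1]$, tile $a_1$ is the connecting tile, tiles $a_1+1,\ldots,\ell_2-1$ belong to $\calg[a_2]$, and so on. In Rabideau's formula each $L_i$ is a sum of monomials in the $y_j$'s indexed by the consecutive products $y_{t_i}, y_{t_i}y_{t_i+1}, \ldots$ over tiles $t$ of the $i$-th piece, multiplied by an overall Laurent correction coming from the connecting tiles that lie between the first tile and the $i$-th piece.

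Next, I apply the specialization $y_1\mapsto q^2$ and $y_j\mapsto q$ for $j>1$. Since the distinguished tile carrying $y_1$ lies only in the first zigzag, the entry $L_1$ specializes to
\[1+q^2+q^3+\cdots+q^{a_1}\;=\;[a_1+1]_q-q,\]
the correction $-q$ being exactly the discrepancy created by replacing $y_1=q$ by $y_1=q^2$. For $i\ge 2$ every tile in the $i$-th zigzag carries $q$, so the matching-sum factor in $L_i$ collapses to $[a_i]_q$, and the Laurent correction from the preceding connecting tiles becomes a monomial $q^{\pm\ell_j}$. The alternation of exponents $q^{-\ell_{2i}}$ versus $q^{\ell_{2i}+1}$ comes directly from the alternation of signs in Rabideau's formula, which in turn reflects the two possible choices of sign function on $\calg\cfa$; the extra $+1$ in the odd-indexed exponent is the contribution of the connecting tile itself.

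Finally I compare the overall normalization. For $n$ odd, Rabideau's formula produces $\Fa=N[L_1,\ldots,L_n]$ without further factor, so part (a) follows once the entry-by-entry match of the previous step is verified. For $n$ even, Rabideau's formula comes with an overall monomial prefactor which after the substitution is exactly $q^{\ell_n}$; this can be obtained either from the direct statement in \cite{R} or by running the recursion $N[L_1,\ldots,L_n]=L_n N[L_1,\ldots,L_{n-1}]+N[L_1,\ldots,L_{n-2}]$ and comparing with part (a) applied to $[a_1,\ldots,a_{n-1}]$.

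The main obstacle will be the careful bookkeeping of which tile contributes which $y_j$, and in particular tracking the sign of the exponent of $q$ coming from each connecting tile as one passes from $\calg[a_i]$ to $\calg[a_{i+1}]$; once this combinatorial match is made the specialization itself is mechanical.
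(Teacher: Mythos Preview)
Your proposal is correct and follows essentially the same route as the paper: both quote Rabideau's continued-fraction expression $F\cfa=N[\call_1,\ldots,\call_n]$ (with the extra prefactor $C_n^{-1}$ when $n$ is even) and then substitute $y_1\mapsto q^2$, $y_j\mapsto q$ for $j>1$, checking the entries one at a time. One small correction to your heuristic: the extra $+1$ in the exponents $q^{\ell_{2i}+1}$ does not come from the connecting tile but from the special substitution $y_1\mapsto q^2$, since Rabideau's factor $C_i=\prod_{j=1}^{\ell_{i-1}}y_j$ (for $i$ odd) contains $y_1$; the same effect is what turns the even-indexed exponent into the clean $-\ell_{2i}$ rather than $-(\ell_{2i}-1)$.
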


\begin{proof} In her formula  \cite{R},
 Rabideau uses the  following notation.
\[\begin{array}
{rclcrcl}	
C_i&=& \left\{ \begin{array}{ll} 
\displaystyle  \prod_{j=1}^{\ell_{i-1}} y_j & \textup{if $ i $ is odd},\\ \\
\displaystyle \prod_{j=1}^{(\ell_i)-1}y_j^{-1} & \textup{if $i$ is even,}\
\end{array} \right.
&\qquad&			
\varphi_i&=& \left\{\begin{array}{ll}
\displaystyle \sum_{k=\ell_{i-1}}^{(\ell_i) -1}\  \prod_{j=(\ell_{i-1})+1}^{k} y_j  & \textup{if $i$ is odd},\\ \\
\displaystyle   \sum_{k=(\ell_{i-1})+1}^{\ell_i}\ \prod_{j=k}^{(\ell_i)-1} y_j & \textup{if $i$ is even,}
\end{array} \right.
\end{array}\]
and $\call_i=C_i\varphi_i.$
With this notation 
\[F\cfa=\left\{
\begin{array}
{ll}
N[\call_1,\call_2,\ldots,\call_n]  & \textup{if $n $ is odd},\\ \\
C_n^{-1}N[\call_1,\call_2,\ldots,\call_n]  & \textup{if $n $ is even}.
\end{array}\right.\]

Under our specialization $y_1\mapsto q^2$ and $y_i\mapsto q$ $(i>1)$, the quantities above transform as follows. 
\[\begin{array}
{rclcrcl}	
C_i &\mapsto&
\left\{ \begin{array}{ll} 
 q^{\ell_{i-1}+1}  & \textup{if $i $ is odd};\\ \\
 q^{-\ell_{i}}  & \textup{if $i$ is even;}
\end{array} \right.
\\ \\
\displaystyle\varphi_1=\sum_{k=0}^{a_1 -1}\  \prod_{j=1}^{k} y_j & \mapsto& 1+q^2+q^3+\cdots +q^{a_1}\ =\  [a_1+1]_q-q;\\
\varphi_i&\mapsto & 
\displaystyle \sum_{k=0}^{\ell_i-\ell_{i-1} -1}\   q^k \ =\  [a_i]_q & (i>1).
\end{array}\]
Thus
\[\call_1\mapsto  [a_1+1]_q-q \quad \textup { and } \quad \call_i \mapsto 
\left\{ \begin{array}{ll} 
  [a_i]_q \,q^{-\ell_i}  & \textup{if $i $ is odd};\\ \\ {}
[a_i]_q \,q^{\ell_{i-1}+1}  & \textup{if $i$ is even,}
\end{array} \right.	
\]
and the proof is complete.
\end{proof}

\begin{remark}
 If the $a_i$ are positive integers then $N\cfa$ is the numerator of the continued fraction, or to be more precise, the positve numerator, since numerators are only defined up to multiplication by units $\pm 1$ in $\mathbb{Z}$. 
 
 Since the $\call_i\in \mathbb{Z}[t,t^{-1}]$ are Laurent polynomials then $N[\call_1,\call_2,\ldots,\call_n]$ is also a representative of the numerator of the continued fraction. Note however that the powers of $t$ are also units in this ring. This is the reason why we define the notation $N$ by the recursion in equation (\ref{def N}).
\end{remark}
\subsection{$F$-polynomials of even continued fractions}
We now want to define  specialized $F$-polynomials for even continued fractions. If the value of the even continued fraction is equal to a positive rational number $r/s$, then its $F$-polynomial is the $F$-polynomial of the positive continued fraction of $r/s$. However, if the value of the even continued fraction is negative, our definition involves the bar automorphism defined in section \ref{sect knots}.  Note that $r/s>0$ if and only if $b_1>0$.

\begin{definition}
 \label{def F}
 Let $\cfb$ be an even continued fraction with value $r/s\in \mathbb{Q}$. Let $\cfa$ be the positive continued fraction expansion of the absolute value  $|r/s|$ of $r/s$ and let $d=a_1+\cdots +a_n-1$. We define the specialized $F$-polynomial of $\cfb$ as follows.
 \[ \Fb = \left\{\begin{array}{rl}
 \Fa &\textup{if $b_1>0$;}\\[8pt]
 (-t^{-1})^{d+1}\,\overline{\Fa} &\textup{if $b_1<0$.} \end{array}\right.\\
  \]
\end{definition}

\begin{example}\label{ex 6.4} The even continued fraction $[2,-2]$ is equal to $3/2$. Thus
 $F_{2,-2}=F_{1,2}$ which is equal    \[q^3N[\, [2]_q-q, [2]_q \,q^{-3}\,]=([2]_q-q) [2]_q + q^3 =(1)(1+q)+q^3= 1+q+q^3=1-t^{-1}-t^{-3}.\] Thus
 \[F_{2,-2}=1-t^{-1}-t^{-3}.\]
 On the other hand, $[-2,2]=-3/2$. Thus
  \[F_{-2,2}=(-t^{-1})^3\,\overline{F_{1,2}} =-t^{-3}(1-t^{1}-t^{3})=-t^{-3}+t^{-2}+1.\]
  Note that we also have $F_{-2,2}=F_{3}$.
\end{example}

\begin{example}\label{ex 6.5} The even continued fraction $[4]$ is equal to $4/1$. Thus
\[F_{4}= [5]_q-q =1+t^{-2}-t^{-3}+t^{-4}.\]
 
 On the other hand, 
  \[F_{-4}=(-t^{-1})^4\,\overline{F_{4}} =t^{-4}+t^{-2}-t^{-1}+1.\]
  Note that we also have $F_{-4}=F_{1,3}$.
\end{example}

\begin{example} \label{ex 6.6}
%
%
%
The even continued fraction $[4,-2]$ is equal to $7/2$ which is equal to the positive continued fraction $[3,2]$. Thus
\[ \begin{array}{rcl}F_{4,-2}&=&F_{3,2}= q^5N[ \,[4]_q -q\,,\,[2]_q\,q^{-5}\,]= ([4]_q-q)[2]_q+q^5\\
&=& 
1-t^{-1}+t^{-2}-2t^{-3}+t^{-4}-t^{-5}.\end{array}\]
On the other hand,
\[F_{-4,2}=(-t^{-1})^5\, \overline{F_{3,2}} =- t^{-5}+t^{-4}-t^{-3}+2t^{-2}-t^{-1}+1.\]
Note that $F_{-4,2}=F_{1,2,2}$.
\end{example}

\begin{lem} \label{lemF} Let $y_{i_j} $ denote the coefficient variable in the cluster algebra that corresponds to the $j$-th tile in the snake graph $\calg\cfa\cong\calg\cfb$, where $j=1,\ldots ,d$. 
 \begin{itemize}
\item [{\rm (a)}]  The $F$-polynomial $F\cfa$ is a polynomial in $\mathbb{Z}_{\ge 0} [y_1,y_2\ldots,y_N]$ of the form \[F\cfa=\sum \chi_{\za_1,\ldots,\za_d}\, y_{i_1}^{\za_1}\cdots y_{i_d}^{\za_d},\] where $\za_j\in\{0,1\}$, with constant term 1 and highest degree term $y_{i_1}y_{i_2}\cdots y_{i_d}$.

\item [{\rm (b)}] The specialized $F$-polynomial $\Fb$ is a polynomial in $\mathbb{Z}[t^{-1}]$ of the form \[\Fb=\sum_{i=0}^{d+1} (-1)^i \zs_i\, t^{-i},\] where $\zs_i\in \mathbb{Z}_{\ge 0}$, with constant term 1 and lowest degree term $(-1)^{d+1}\,t^{-d-1}$. In particular, the degree of $\Fb$ is 1.
\end{itemize}
\end{lem}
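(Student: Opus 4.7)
The plan is to read both parts off the snake-graph matching formula for $F$-polynomials of surface cluster algebras from \cite{MSW}, and then carry out careful bookkeeping under the prescribed specialization.

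For part (a), I would invoke the formula $F\cfa = \sum_{P}\prod_{j=1}^{d} y_{i_j}^{h_j(P)}$, where $P$ ranges over perfect matchings of $\calg\cfa$ and the height vector $h(P)\in\{0,1\}^d$ records the tiles in which $P$ differs from the canonical minimal matching $P_-$. Collecting matchings by their height yields the stated form with $\chi_\alpha\in\mathbb{Z}_{\ge 0}$ (the number of matchings of height $\alpha$) and $\alpha_j\in\{0,1\}$. The matching $P_-$ has $h(P_-)=0$, contributing the constant term $1$, and the maximal matching $P_+$, obtained by flipping every tile of $P_-$, has $h(P_+)=(1,\ldots,1)$, contributing $y_{i_1}\cdots y_{i_d}$.

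For part (b) in the case $b_1>0$, where $\Fb=\Fa$, I apply the specialization $y_1\mapsto t^{-2}$, $y_{i_j}\mapsto -t^{-1}$ for $j>1$, directly to part (a). The hypothesis that only the first tile carries $y_1$ guarantees that a monomial $y_{i_1}^{\alpha_1}\cdots y_{i_d}^{\alpha_d}$ specializes to
\[
t^{-2\alpha_1}(-t^{-1})^{\alpha_2+\cdots+\alpha_d} \;=\; (-1)^{e}\,t^{-e}, \qquad e:=2\alpha_1+\alpha_2+\cdots+\alpha_d,
\]
the sign depending only on $e$ because $2\alpha_1$ is even. As $\alpha$ ranges over $\{0,1\}^d$, $e$ ranges over $\{0,1,\ldots,d+1\}$. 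Grouping by $e$ and using $\chi_\alpha\ge 0$ gives $\Fb=\sum_{e=0}^{d+1}(-1)^e\zs_e\,t^{-e}$ with $\zs_e\ge 0$, and the extreme values come from unique $\alpha$'s: $\zs_0=\chi_0=1$ and $\zs_{d+1}=\chi_{(1,\ldots,1)}=1$, producing the required constant term and lowest-degree term $(-1)^{d+1}t^{-d-1}$.

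For $b_1<0$, the definition gives $\Fb=(-t^{-1})^{d+1}\overline{\Fa}$, where $\cfa$ is the positive continued fraction of $|r/s|$; by Theorem~\ref{thm 1}, its snake graph also has $d$ tiles, so the previous paragraph applies to $\Fa$. Applying $\overline{(\ )}$ turns $\sum(-1)^e\zs_e t^{-e}$ into $\sum(-1)^e\zs_e t^{e}$, and multiplying by $(-1)^{d+1}t^{-d-1}$ followed by the re-indexing $i=d+1-e$ produces an expression of the same shape with $\zs_0$ and $\zs_{d+1}$ interchanged, so that the constant term and the lowest-degree term again equal $1$ and $(-1)^{d+1}t^{-d-1}$. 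The only non-trivial input is the matching formula for part (a); the main (though routine) obstacle is tracking signs and exponents simultaneously through the bar automorphism and the normalization by $(-t^{-1})^{d+1}$, which works precisely because each monomial in part (a) is squarefree in the tile variables and the factor $2$ on $\alpha_1$ is even.
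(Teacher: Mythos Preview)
Your proof is correct and follows the same strategy as the paper's: part (a) comes from known structure of surface $F$-polynomials, and part (b) is a direct specialization computation. The paper's own argument is terser---it simply cites \cite{FZ4,DWZ2,MSW,LS4} for (a) and says in one line that (b) is alternating because (a) is positive and that the lowest degree is $-d-1$---whereas you extract everything concretely from the matching formula in \cite{MSW}, verify that the extreme coefficients $\zs_0$ and $\zs_{d+1}$ are exactly $1$ (not just nonzero), and treat the case $b_1<0$ explicitly via the bar involution, which the paper's sketch omits. Your more careful bookkeeping is an improvement, not a different route.
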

\begin{proof}
 Part (a) was conjectured in \cite{FZ4} and proved in \cite{DWZ2} except for positivity which is shown in \cite{MSW} for surface type and in \cite{LS4} for arbitrary quivers. The sum in (b) is alternating, since the sum in (a) is positive. The lowest degree in (b) is $-d-1$, since $y_1 $ is specialized to $t^{-2}$ and every other $y_i$ is specialized to $-t^{-1}$.
\end{proof}

The following proposition has already been observed in the examples above. 
In the proof, we shall work with the {\em minimal matching} $P_-$ of the snake graph $\calg\cfa$. The snake graph has precisely two perfect matchings $P_-$ and $P_+$ which contain only boundary edges of the snake graph. By convention, the minimal matching $P_-$ is the one that contains the edge $e_0$, the south edge of the first tile. The minimal matching has trivial coefficient $y(P_-)=1$.  The matching $P_+$ is called the maximal matching of the snake graph, and its coefficient  $y(P_+)=y_{i_1}y_{i_2}\cdots y_{i_d}$  is the product of the $y$-coefficients of all tiles in the snake graph.
The term $y(P_-)$ is the constant term in the  polynomial  $F\cfa$ and $y(P_+)$ is the term of highest degree.

\begin{prop}
 \label{prop Fbar} Let $\cfa$ be a positive continued fraction and let $d
 =a_1+a_2+\cdots+a_n-1$. Then
\[
\begin{array}{rcl} F\cfa&=& y_{i_1} y_{i_2}\cdots y_{i_d} \ \overline{F[1,a_1-1,a_2,\ldots,a_n]}.\\ \\
\Fa &=&q^{d+1} \ \overline{F_{1,a_1-1,a_2,\ldots,a_n}}.
\end{array}
\]
\end{prop}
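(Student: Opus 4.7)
The key observation is that, by Theorem~\ref{thm 1}, the snake graphs $\calg\cfa$ and $\calg[1,a_1-1,a_2,\ldots,a_n]$ are the same abstract labelled graph; they differ only in the choice of the boundary edge $e_0$, which in turn determines the minimal matching $P_-$. For $\calg\cfa$, the edge $e_0$ is the south edge of the first tile; for $\calg[1, a_1-1, a_2, \ldots, a_n]$, the sign sequence of the continued fraction (which now begins with a single $-\ze$ followed by $a_1-1$ entries of sign $+\ze$) forces $e_0$ to be the \emph{other} edge in $\calg^{SW}$, namely the west edge of the first tile. Since a snake graph has only two boundary matchings and these two choices of $e_0$ determine distinct ones, this identifies the minimal matching of the second snake graph with the maximal matching $P_+$ of $\calg\cfa$.

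Next, invoke the matching formula of \cite{MSW}: if $y(P)=\prod_{T\in h(P)} y_T$ records the tile-height of $P$ above $P_-$, and $y'(P)=\prod_{T\in h'(P)} y_T$ the tile-height above $P_+$, then
\[F\cfa=\sum_P y(P)\qquad\text{and}\qquad F[1,a_1-1,a_2,\ldots,a_n]=\sum_P y'(P),\]
where $P$ ranges over perfect matchings of the common graph. The planar-topological heart of the argument is the pointwise identity
\[y(P)\cdot y'(P) = y_{i_1}y_{i_2}\cdots y_{i_d}.\]
Indeed, $(P\triangle P_-)\triangle(P\triangle P_+) = P_-\triangle P_+$, and the right-hand side is the full boundary cycle of the snake graph, which encloses every tile exactly once. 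Hence for each tile $T$, exactly one of the cycles $P\triangle P_-$ and $P\triangle P_+$ encloses $T$, so the height-sets $h(P)$ and $h'(P)$ partition the set of all $d$ tiles.

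Summing $y(P)=y_{i_1}\cdots y_{i_d}\cdot y'(P)^{-1}$ over $P$ gives
\[F\cfa = y_{i_1}\cdots y_{i_d}\cdot \overline{F[1,a_1-1,a_2,\ldots,a_n]},\]
where the bar denotes the involution $y_i\mapsto y_i^{-1}$, which is the first equation. For the second, specialize $y_1\mapsto t^{-2}$ and $y_i\mapsto -t^{-1}$ for $i>1$: by the convention that the first tile carries the label $y_1$ and no other tile does, one computes
\[(y_{i_1}\cdots y_{i_d})\big|_{\mathrm{spec}} = t^{-2}(-t^{-1})^{d-1} = (-t^{-1})^{d+1} = q^{d+1}.\]
Moreover the bar $t^{1/2}\mapsto t^{-1/2}$ coincides with $y_i\mapsto y_i^{-1}$ on each specialized generator (both $\overline{t^{-2}}=t^2=(t^{-2})^{-1}$ and $\overline{-t^{-1}}=-t=(-t^{-1})^{-1}$), so the two involutions agree on specialized polynomials. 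The specialized identity is the second equation.

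The main obstacle is the identification $P_-^{\calg[1,a_1-1,\ldots]} = P_+$, which requires tracking the sign convention for $e_0$ across the two continued fraction expansions carefully; once that is in place, the height-duality is a routine planarity fact and the specialization is a bookkeeping exercise.
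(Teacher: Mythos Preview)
Your proof is correct and follows essentially the same approach as the paper: both identify the minimal matching of $\calg[1,a_1-1,a_2,\ldots,a_n]$ with the maximal matching of $\calg\cfa$ (the paper phrases this via a reflection along the diagonal of the first tile, you via the change of boundary edge $e_0$), and then use complementarity of heights to conclude. Your symmetric-difference justification of the height complementarity is a bit more explicit than the paper's, but the argument is the same.
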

\begin{proof}
 The snake graph $\calg[1,a_1-1,a_2,\ldots, a_n]$ is obtained from the snake graph $\calg\cfa$ by a reflection along the line containing the diagonal of the first tile. For example,  
 \[{\LARGE\scalebox{0.55}{
\begingroup%
  \makeatletter%
  \providecommand\color[2][]{%
    \errmessage{(Inkscape) Color is used for the text in Inkscape, but the package 'color.sty' is not loaded}%
    \renewcommand\color[2][]{}%
  }%
  \providecommand\transparent[1]{%
    \errmessage{(Inkscape) Transparency is used (non-zero) for the text in Inkscape, but the package 'transparent.sty' is not loaded}%
    \renewcommand\transparent[1]{}%
  }%
  \providecommand\rotatebox[2]{#2}%
  \ifx\svgwidth\undefined%
    \setlength{\unitlength}{454.76596069bp}%
    \ifx\svgscale\undefined%
      \relax%
    \else%
      \setlength{\unitlength}{\unitlength * \real{\svgscale}}%
    \fi%
  \else%
    \setlength{\unitlength}{\svgwidth}%
  \fi%
  \global\let\svgwidth\undefined%
  \global\let\svgscale\undefined%
  \makeatother%
  \begin{picture}(1,0.10908964)%
    \put(0,0){\includegraphics[width=\unitlength]{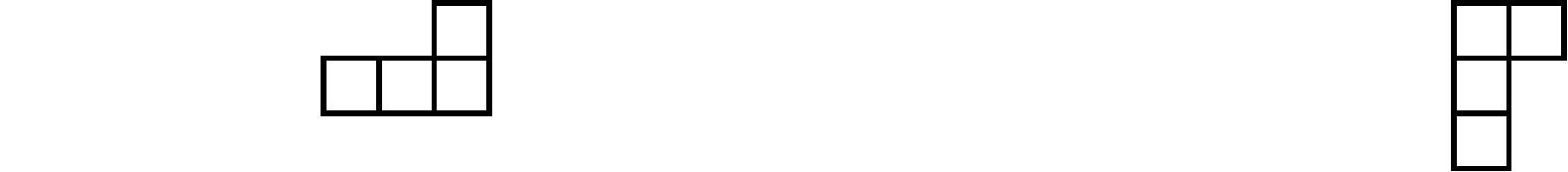}}%
    \put(-0.00045181,0.04852476){\color[rgb]{0,0,0}\makebox(0,0)[lb]{\smash{$\calg[2,3]=$}}}%
    \put(0.70320682,0.04852476){\color[rgb]{0,0,0}\makebox(0,0)[lb]{\smash{$\calg[1,1,3]=$}}}%
    \put(0.41814478,0.04869655){\color[rgb]{0,0,0}\makebox(0,0)[lb]{\smash{and}}}%
  \end{picture}%
\endgroup%
}}\]
 This reflection induces a bijection $\varphi$ between the sets of perfect matchings of the snake graphs. Under this bijection, the minimal matching of $\calg\cfa$ is mapped to the maximal matching of $\calg[1,a_1-1,a_2,\ldots,a_n]$, and this implies that for an arbitrary perfect matching $P$ of $\calg\cfa$ with height function $y(P)=y_{i_1}^{\za_1}\cdots y_{i_d}^{\za_d}$, the corresponding perfect matching $\varphi(P)$ of  $\calg[1,a_1-1,a_2,\ldots,a_n]$ has the complementary height function, that is, 
 \[y(\varphi(P))= y_{i_1}^{1-\za_1}y_{i_2}^{1-\za_2}\cdots y_{i_d}^{1-\za_d}=y_{i_1}y_{i_2}\cdots y_{i_d} \, \overline{y(P)}.\]
 This proves the first identity, and the second follows by specialization.
 \end{proof}

Our next lemma gives a recursive formula for the F-polynomial of an even continued fraction.  The result mainly follows  from results in \cite{CS2}, however, we are using even continued fractions here instead of positive continued fractions in loc.cit. 

\begin{lem}
 \label{lem 6} 
Let $\cfb$ be an even continued fraction.  
Then we have the following identity of $F$-polynomials depending on the type sequence of $\cfb$.

If $b_1>0$ then
 $F\cfb$ is equal to 
\[\left\{\begin{array}
 {ll}
  - F[b_1,b_2,\ldots,b_{m-2}] \prod_{\calg[b_m]} y_i\ +\ F[b_1,b_2,\ldots,b_{m-1}] F[b_m] &\textup{in type $(\ldots,-,-)$};\\ \\
 F[b_1,b_2,\ldots,b_{m-2}] \prod_{\calg\setminus\calg_2} y_i\ +\  F[b_1,b_2,\ldots,b_{m-1}] F[b_m] &\textup{in type $(\ldots,+,-)$};\\ \\   F[b_1,b_2,\ldots,b_{m-2}]\ +\ F[b_1,b_2,\ldots,b_{m-1}] F[b_m] y_\ell&\textup{in type $(\ldots,-,+)$};\\ \\
 - F[b_1,b_2,\ldots,b_{m-2}] \prod_{\calg_1\setminus\calg_2} y_i \ +\  F[b_1,b_2,\ldots,b_{m-1}] F[b_m]&\textup{in type $(\ldots,+,+)$},
\end{array}\right.
 \] 
 where
\begin{itemize}
\item []
 $\prod_{\calg[b_m]}$ runs over all tiles $G_i\in\calg[b_m]$; \\
 \item[] 
$\prod_{\calg\setminus\calg_2}$ runs over all tiles $G_i$ that lie in $\calg\cfb$ but not in $\calg[b_1,b_2,\ldots,b_{m-2}]$; \\
 \item[] 
 $y_\ell$ is the coefficient of the tile $G_\ell$ that connects $\calg[b_1,\ldots,b_{m-1}]$ and $\calg[b_m]$ in $\calg\cfb$; and\\
 \item[]
 $\prod_{\calg_1\setminus\calg_2}$ runs over all tiles $G_i$ that lie in $\calg[b_1,\ldots,b_{m-1}]$ but not in $\calg[b_1,b_2,\ldots,b_{m-2}]$.
\end{itemize}

\smallskip
If $b_1<0$, the above formulas hold if we replace the types with their negatives.
\end{lem}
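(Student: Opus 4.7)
The plan is to derive the four formulas by partitioning the perfect matchings of $\calg\cfb$ according to their behaviour at the join between the two subgraphs $\calg[b_1,\ldots,b_{m-1}]$ and $\calg[b_m]$. Which geometric join occurs is dictated by $\sgn(b_{m-1}b_m)$: when the two signs agree, the join introduces a connecting tile $G_\ell$, and when they disagree the two subgraphs share a single boundary edge (as described in Section~\ref{sect sg}). The four listed types $(\ldots,\pm,\pm)$ refine these two geometric cases by further recording $\sgn(b_{m-1})$, which controls how the minimal matching of the full graph $\calg\cfb$ is related to the chosen minimal matchings of the two pieces.

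Concretely, for each geometric case I would split the matchings of $\calg\cfb$ according to which edge of the glue (or of the connecting tile $G_\ell$) is used, and put each class in bijection with either a pair of matchings of the pieces or a matching of the truncated subgraph $\calg[b_1,\ldots,b_{m-2}]$ obtained by peeling off the last zigzag block together with its attaching tile. Summing heights over each class and identifying the resulting expressions with $F[b_1,\ldots,b_{m-1}]\,F[b_m]$, $F[b_1,\ldots,b_{m-2}]$, or one of these times an explicit monomial in the $y_i$, recovers the four formulas. This is the snake-graph grafting identity of \cite{CS2}, adapted from positive to even continued fractions using the isomorphism of Theorem~\ref{thm 1}.

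The main obstacle is the bookkeeping of the minimal matching, since across the four cases the minimal matching of $\calg\cfb$ may or may not restrict to the minimal matching of each subgraph. When it does not, the corresponding $F$-polynomial factor must be replaced by its ``reflected'' version via Proposition~\ref{prop Fbar}, and this reflection is precisely what produces the minus signs and the monomial factors $\prod_{\calg[b_m]} y_i$, $\prod_{\calg\setminus\calg_2} y_i$, and $\prod_{\calg_1\setminus\calg_2} y_i$ appearing in types $(\ldots,-,-)$, $(\ldots,+,-)$, and $(\ldots,+,+)$. Finally, the case $b_1<0$ reduces to the $b_1>0$ case: negating all $b_i$ reverses every entry of the type sequence and, by Definition~\ref{def F}, applies a power of $-t^{-1}$ together with the bar involution to each $F$-polynomial appearing in the identity, so these normalizations cancel term by term and the same formula holds with the negated type.
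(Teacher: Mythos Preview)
Your overall architecture matches the paper's: the four cases arise from the two geometric joins (connecting tile when $\sgn(b_{m-1})=\sgn(b_m)$, shared gluing edge otherwise), refined by whether the minimal matching $P_-$ of $\calg\cfb$ restricts to a matching of $\calg[b_m]$. The paper then quotes the appropriate grafting identity from \cite{CS2} (connecting-tile case) or the resolution identity from \cite{CS} (shared-edge case), and determines the restriction of $P_-$ by tracking the sign of the last edge $e'_\beta\in\calg^{NE}$.

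There is, however, a real gap in your account of the minus signs. In types $(\ldots,-,-)$ and $(\ldots,+,+)$ the two pieces $\calg[b_1,\ldots,b_{m-1}]$ and $\calg[b_m]$ \emph{overlap} in a tile rather than meeting along a connecting tile, and a straightforward partition of $\Match(\calg\cfb)$ into two classes cannot possibly yield a negative term: every height monomial is positive. The identity in these cases is not a partition but a \emph{resolution} formula of the kind in \cite[Section~2.5, Case~2 and Theorem~6.3]{CS}: the product $F[b_1,\ldots,b_{m-1}]\,F[b_m]$ overcounts, and the subtracted term $F[b_1,\ldots,b_{m-2}]\prod y_i$ is the correction. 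This is where the minus sign comes from, not from Proposition~\ref{prop Fbar}; that proposition relates two positive $F$-polynomials via the bar involution and a monomial and never introduces a sign. Your proposed mechanism (``reflect the factor when $P_-$ does not restrict'') can explain the monomial shifts, but it cannot explain the $-1$.

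A smaller point: your reduction of the $b_1<0$ case invokes Definition~\ref{def F}, but that definition concerns the \emph{specialized} polynomial $\Fb$, whereas the lemma is about the unspecialized $F\cfb$. The correct reduction, as the paper does, is to note that for $b_1<0$ the sign of $e_0$ flips, which exchanges the roles of $+$ and $-$ in the analysis of whether $P_-$ restricts to $\calg[b_m]$; the snake graph itself is unchanged by Lemma~\ref{lem extra}(a).
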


\begin{proof}
 One reason for the separate cases  is that the construction of $\calg\cfb$ is different if $b_m$ and $b_{m-1}$ have the same sign or not, see section \ref{sect sgb}.
 Suppose first that $b_m$ and $b_{m-1}$ have the same sign, thus the type sequence has a sign change in the last two positions. 
 In this case, the grafting with a single edge formula \cite[section 3.3, case 3]{CS2} gives the following two results. 
 
 If the minimal matching $P_-$ restricts to a perfect matching of $\calg[b_m]$ then
 \[
\calg\cfb= \calg[b_1,b_2,\ldots,b_{m-2}] \prod_{\calg\setminus\calg_2} y_i \ + \ \calg[b_1,b_2,\ldots,b_{m-1}] \,\calg[b_m] .\]

 If the  $P_-$ does not restrict to a perfect matching of $\calg[b_m]$ then
 \[
\calg\cfb= \calg[b_1,b_2,\ldots,b_{m-2}]\ +\ \calg[b_1,b_2,\ldots,b_{m-1}]\, \calg[b_m]\,  y_\ell .\]

Suppose now that  $b_m$ and $b_{m-1}$ have opposite signs, thus the last two entries in the type sequence are equal. In this case,  \cite[section 2.5, case 2 \& Theorem 6.3]{CS} yields the following two results. 

If the $P_-$ restricts to a perfect matching of $\calg[b_m]$ then
 \[
\calg\cfb=-\calg[b_1,b_2,\ldots,b_{m-2}] \prod_{\calg_1\setminus\calg_2} y_i\ +\ \calg[b_1,b_2,\ldots,b_{m-1}] \,\calg[b_m] ,\]

If the $P_-$ does not restrict to a perfect matching of $\calg[b_m]$ then
 \[
\calg\cfb=-\calg[b_1,b_2,\ldots,b_{m-2}] \prod_{\calg[b_m]} y_i\ +\ \calg[b_1,b_2,\ldots,b_{m-1}]\, \calg[b_m] .\]

Thus we need to determine when $P_-$ restricts to a perfect matching of $\calg[b_m]$.  See Figure~\ref{match5} for examples that illustrate the argument. Suppose first that $b_1>0$. 
Then the first edge of the minimal matching $e_0$ has sign $f(e_0)=+$.  Moreover 
$e_0$ is a south edge, which implies that all south edges and all west edges of the minimal matching $P_-$ have sign $+$ and all north and all east edges in $P_-$ have sign $ -$, see \cite[Lemma 4.3]{CS2}. In particular, the last edge $e\in\calgNE$ of the minimal matching has sign $-$. 
On the other hand, the last edge  $e'_{\zb}\in\calgNE$ of the sign sequence of the even continued fraction $\cfb$ has sign $-$ if and only if $\cfb$ is of type $(\ldots,-)$. Thus
$e'_{\zb}$ is in the minimal matching $P_-$ if and only if the type sequence of $\cfb$ ends in a minus sign.

Since $\calg[b_m]$ is a zigzag graph, if $P_-$ contains $e'_{\zb}$, then it contains two boundary edges of the last tile and thus restricts to a matching of the last tile. Moreover, $P_-$ contains 3 boundary edges of the last two tiles and thus restricts to a matching of the last two tiles. Continuing this way, we see that $P_-$ restricts to a matching of the last $b_m-1$ tiles. The question whether $P_-$ restricts to a matching of the last $b_m$ tiles only depends on how $\calg[b_m]$ is glued to $\calg[b_1,\ldots,b_{m-1}]$. 
Thus, the minimal matching restricts to $\calg[b_m]$ in type $(\ldots,+,-)$; these cases are marked $(\ldots,+,-)$ in Figure \ref{match5}. On the other hand, the minimal matching does not restrict to $\calg[b_m]$ in type $(\ldots,-,-)$.

Now suppose $\cfb$ is of type $(\ldots,+)$. Then $e'_{\zb}$ is not in $P_-$. Thus $P_-$ contains only one edge of the last tile, only two edges of the last two tiles and only $b_{m}-2$ edges of the last $b_{m}-1 $ tiles. 
Again, the question whether $P_-$ restricts to a matching of the last $b_m$ tiles only depends on how $\calg[b_m]$ is glued to $\calg[b_1,\ldots,b_{m-1}]$.
Thus, the minimal matching restricts to $\calg[b_m]$ in type $(\ldots,+,+)$; these cases are marked $(\ldots,+,+)$ in Figure \ref{match5}. On the other hand, the minimal matching does not restrict to $\calg[b_m]$ in type $(\ldots,-,+)$.
This proves the statement in the case where $b_1>0$.

If $b_1<0$ the above argument is still valid, except that in this case, the sign of the edge $e_0$ is $-$, and therefore the roles of the signs are reversed. 
\end{proof}
\begin{figure}
\begin{center}
  {\scriptsize \scalebox{0.7}{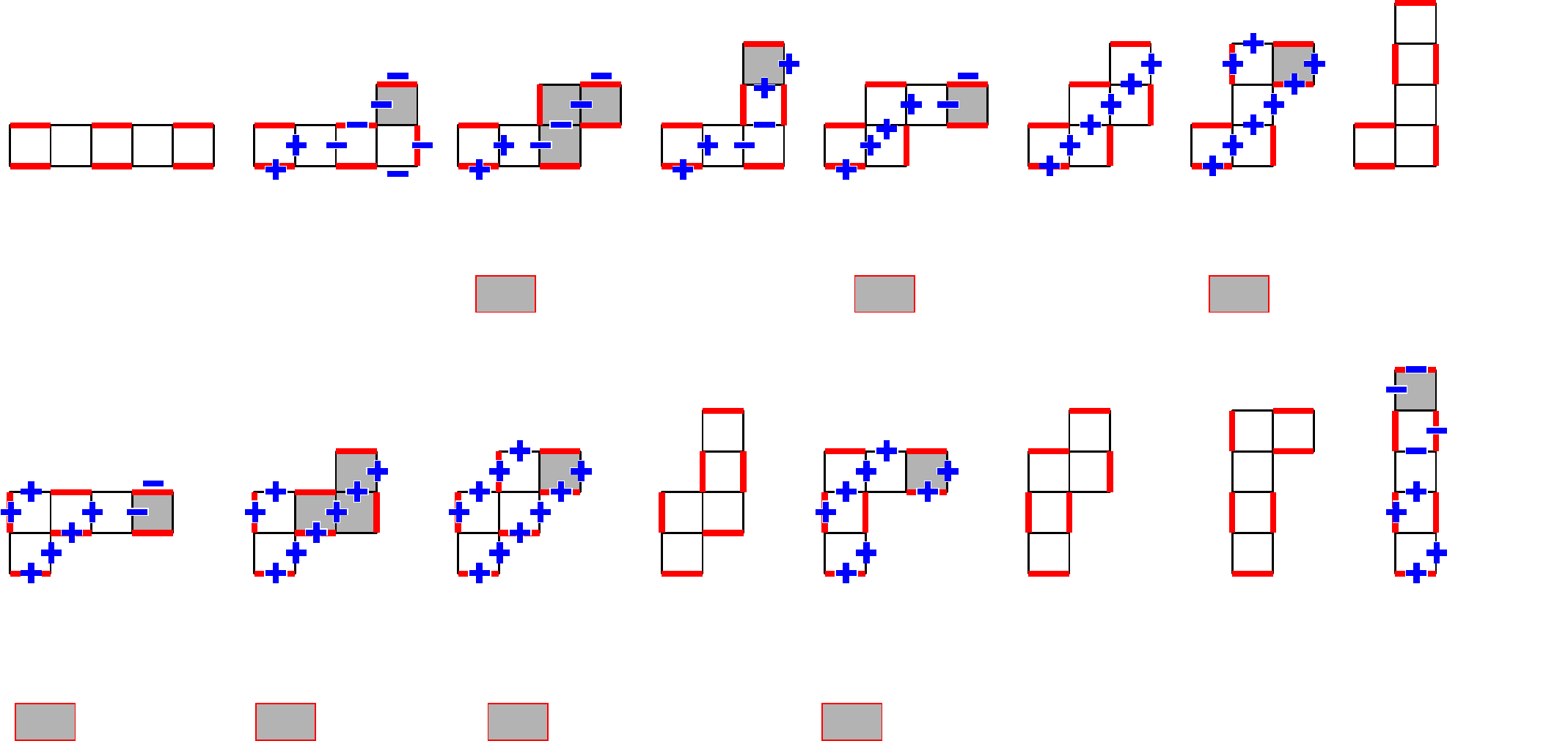}}
 \caption{A complete list of  snake graphs with 5  tiles together with their minimal matchings in bold red. If the snake graph corresponds to an even continued fraction $\cfb$ then the subgraph $\calg[b_m]$ is shaded. The labels indicate the last two entries in the type sequence of the continued fraction ($\pm,\pm$). This label sits in a shaded box if $P_-$ restricts to $\calg[b_m]$, which happens precisely when the label is $+,-$ or $+,+$.} 
 \label{match5}
\end{center}
\end{figure}

We now apply this result to the specialization of the $F$-polynomial.
\begin{cor}
 \label{cor 8bis} 
 Let $\cfb$ be an even continued fraction with $b_1>0$. Then
\[ \Fb= \mu(t)\,  F_{b_1,b_2,\ldots,b_{m-2}} +
\nu(t)\, [\,|b_m|\,]_q\,F_{b_1,b_2,\ldots,b_{m-1}} \]
 where the functions $\nu(t)$ and $\mu(t)$ depend on the type sequence of $\cfb$ as follows
%
 \begin{center}
  \begin{tabular}{  |c | c | r |}
\hline $\nu(t)$&$\mu(t)$&$\type[b_1,\ldots,b_m]$\\ \hline &&\\[-6pt]
$ 1$&$t^{- |b_m|+1} $&$(\ldots,-,-) $\\[4pt]$
 1$&$t^{- |b_m|-|b_{m-1}|} $&$(\ldots,-,+,-) $\\[4pt]$
 1$&$\ -t^{- |b_m|-|b_{m-1}|+1} \ $&$(\ldots,+,+,-) $\\[4pt]\hline &&\\[-6pt]
$\  -t^{-1}$\ &$1$&$(\ldots,-,+) $\\[4pt]$
 1$&$-t^{-|b_{m-1}|} $&$(\ldots,-,+,+) $\\[4pt]$
1$&$t^{-|b_{m-1}|+1} $&$(\ldots,+,+,+) $\\[4pt]
\hline
  \end{tabular}
\end{center}

\end{cor}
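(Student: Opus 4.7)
Plan: The corollary is a case-by-case specialization of Lemma~\ref{lem 6} under $y_1\mapsto t^{-2}$ and $y_j\mapsto q=-t^{-1}$ for $j>1$. Since $b_1>0$ and $m\geq 2$, every tile appearing in the products $\prod_{\calg[b_m]}y_i$, $\prod_{\calg\setminus\calg_2}y_i$, $\prod_{\calg_1\setminus\calg_2}y_i$, the single tile indexed by $y_\ell$, and all tiles of the subgraph $\calg[b_m]$ itself, lie strictly to the right of the first tile of $\calg\cfb$. Hence each of these $y_i$'s specializes to $q$, and I would begin by recording this reduction.

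First I would establish that the specialization of $F[b_m]$ equals $[\,|b_m|\,]_q$. Since $\calg[b_m]$ is a zigzag snake graph on $|b_m|-1$ tiles, a direct induction on the number of tiles (matchings of a zigzag are determined by the rightmost uncovered interior edge) shows it has exactly $|b_m|$ perfect matchings, whose heights are the monomials of degrees $0,1,\ldots,|b_m|-1$, one of each. Summing and applying $y_i\mapsto q$ yields $1+q+\cdots+q^{|b_m|-1}=[\,|b_m|\,]_q$.

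Next I would compute, in each of the four cases of Lemma~\ref{lem 6}, the cardinality of the tile set in the relevant product. By the gluing procedure of Section~\ref{sect sg} and Corollary~\ref{cor 3.3}, a connecting tile between $\calg[b_{i-1}]$ and $\calg[b_i]$ exists precisely when $\sgn(b_{i-1})=\sgn(b_i)$. Using the identity $\type(b_i)=(-1)^{i+1}\sgn(b_i)$, this sign equality translates into a condition on consecutive entries of the type sequence. In particular, this condition at position $m-2$ bifurcates the Lemma~\ref{lem 6} cases $(\ldots,+,-)$ and $(\ldots,+,+)$ into the two sub-rows each that appear in the corollary's table, yielding tile counts $|b_{m-1}|+|b_m|$ vs.\ $|b_{m-1}|+|b_m|-1$ in the first bifurcation and $|b_{m-1}|$ vs.\ $|b_{m-1}|-1$ in the second.

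Finally, substituting $q=-t^{-1}$ gives $q^k=(-1)^k\,t^{-k}$; since each $|b_i|$ is even, the parity of each exponent $k$ is dictated only by the number of connecting tiles appearing in the product. A short verification then handles all six rows: e.g.\ in type $(\ldots,-,-)$ the exponent $|b_m|-1$ is odd, and its sign $-1$ cancels the leading minus of Lemma~\ref{lem 6} to yield $\mu(t)=t^{-|b_m|+1}$, $\nu(t)=1$; in type $(\ldots,+,+,-)$ the exponent $|b_{m-1}|+|b_m|-1$ is odd, leaving $\mu(t)=-t^{-|b_m|-|b_{m-1}|+1}$; in type $(\ldots,-,+)$ the lone factor $y_\ell$ becomes $-t^{-1}$, giving $\nu(t)=-t^{-1}$; the remaining three rows follow analogously. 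The main obstacle is precisely this step three bookkeeping, keeping straight which gluings produce connecting tiles and hence which exponents acquire the extra $-1$ from parity.
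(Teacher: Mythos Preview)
Your proposal is correct and follows essentially the same route as the paper: specialize Lemma~\ref{lem 6}, count the tiles in each relevant product via Corollary~\ref{cor 3.3} (which forces the bifurcation of the $(\ldots,+,-)$ and $(\ldots,+,+)$ cases according to the sign at position $m-2$), and then read off $\mu(t)$ and $\nu(t)$ from the parity of the exponent of $q=-t^{-1}$. You are in fact slightly more explicit than the paper on two points it leaves implicit---that the first tile never appears in any of the products (so every $y_i$ involved specializes to $q$), and that the zigzag $F[b_m]$ specializes to $[\,|b_m|\,]_q$---both of which are worth stating.
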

\begin{proof} According to  Lemma \ref{lem 6}, we only need to check that $\mu(t),\nu(t)$ are the correct functions. Moreover it is clear from the lemma that $\nu(t)=1$ in all cases except in the case $(\ldots,-,+)$ where $\nu(t)=-t^{-1}$. 

In type $(\ldots,-,-)$,  Lemma \ref{lem 6} implies that  $\mu(t)= - (-t^{-1})^\za$, where $\za $ is the number of tiles in $\calg[b_m]$. Since $b_m$ and $b_{m-1}$ have opposite signs we have $\za=|b_m|-1$, and since $b_m$ is even, we see that $\mu(t) =t^{-|b_m|+1}$. 

 In type $(\ldots,+,-)$,   Lemma \ref{lem 6} implies that $\mu(t)=
(-t^{-1})^{\za}$, where $\za$ is  the number of tiles in  $\calg\cfb \setminus\calg[b_1,b_2,\ldots,b_{m-2}]$. According to Corollary \ref{cor 3.3}, this number  is $|b_m|+|b_{m-1}| - \textup{(number of sign changes in $b_{m-2},b_{m-1})$}$. Thus $\za = |b_m|+|b_{m-1}|$ in type $(\ldots,-,+,-)$; and  $\za = |b_m|+|b_{m-1}|-1$ in type $(\ldots,+,+,-)$. Since $b_{m-1}$ and $b_{m-2}$ are both even, it follows that $\mu(t)= t^{- |b_m|-|b_{m-1}|}$ in type $(\ldots,-,+,-)$, and $\mu(t)= -t^{- |b_m|-|b_{m-1}|+1}$ in type $(\ldots,+,+,-)$.

In type $(\cdots,-,+)$, Lemma \ref{lem 6} implies  $\mu(t)=1$.

In type $(\cdots,+,+)$,
 Lemma \ref{lem 6} implies that  $\mu(t)=-(-t^{-1})^{\za}$, where $\za$ is  the number of tiles in  $\calg[b_1,b_2,\ldots,b_{m-1}] \setminus\calg[b_1,b_2,\ldots,b_{m-2}]$.  Similar to the previous case, we see that $\za = |b_{m-1}|$ in type $(\cdots,-,+,+)$ ; and  $\za = |b_{m-1}|-1$ in type $(\cdots,+,+,+)$, and this completes the proof.
\end{proof}

\section{Main results}\label{sect main}
In this section, we state and prove our main theorem. Recall that we have fixed the orientation of all knots and links in section \ref{sect knots}. 

\medskip

\begin{thm}
 \label{thm main}
 Let $\cfb$ be an even continued fraction, let $V_{\cfb}$ be the Jones polynomial of the corresponding 2-bridge link, and let $\Fb$ be the specialized $F$-polynomial of the corresponding cluster variable. Then
 \[ V_{\cfb}=  (- 1) ^{m-\tau} \,t^j \ \Fb \]
 where   $\tau$ is the number of subsequences $+,+$ in the type sequence of $\cfb$ and \[\textstyle j=  \sum_{i=1}^m  \max\left( (-1)^{i+1} b_i + \frac{\textup{sign}(b_ib_{i-1})}{2}\ ,\ -\frac{1}{2}\right).\]
\end{thm}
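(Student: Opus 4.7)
The plan is to prove Theorem~\ref{thm main} by induction on the length $m$ of the even continued fraction $\cfb$, using the recursion for $V_{\cfb}$ from Theorem~\ref{thm 5} and the recursion for $\Fb$ from Corollary~\ref{cor 8bis}.

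For the base case $m=1$, I verify the identity $V_{[b_1]}=(-1)\,t^{j}F_{[b_1]}$ (here $\tau=0$) by direct computation, using the closed form of $V_{[b_1]}$ in Theorem~\ref{thm 5} and the closed form of $F_{[b_1]}$ given by Proposition~\ref{prop F} (if $b_1>0$) or Definition~\ref{def F} (if $b_1<0$). The identity $[b]_{\overline q}=-t^{b-1}[b]_q$, valid for every positive even integer $b$ because $\overline q=q^{-1}$ and $(-1)^{1-b}=-1$, together with the explicit value $j=b_1+\frac12$ for $b_1>0$ and $j=-\frac12$ for $b_1<0$ from Theorem~\ref{thm degree}, reduces the check to a short calculation in $\mathbb{Z}[t^{\pm 1/2}]$.

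For the inductive step with $b_1>0$, I substitute the inductive hypothesis
\[ V_{[b_1,\ldots,b_{m-k}]}=(-1)^{m-k-\tau_k}t^{j_k}F_{[b_1,\ldots,b_{m-k}]}\qquad(k=1,2) \]
into the recursion of Theorem~\ref{thm 5}, and compare the outcome term by term with $(-1)^{m-\tau}t^j\Fb$, where $\Fb$ is expanded via Corollary~\ref{cor 8bis}. In the type $(\ldots,+)$ branch of the $V$-recursion I first convert $[|b_m|]_{\overline q}$ into $[|b_m|]_q$ using the identity above, so that both sides become $\mathbb{Z}[t^{\pm 1/2}]$-linear combinations of $F_{[b_1,\ldots,b_{m-2}]}$ and of $[|b_m|]_q F_{[b_1,\ldots,b_{m-1}]}$. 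Matching these coefficients yields, in each of the six sub-cases of Corollary~\ref{cor 8bis}, one exponent equation and one sign equation. The exponent equations are immediate from the relations among $j_0,j_1,j_2$ collected in Corollary~\ref{cor 7}, and the sign equations follow from Corollary~\ref{cor 8} together with the elementary combinatorial fact that extending the type sequence by one or two letters changes $\tau$ by a predictable amount (either $0$ or $1$) that matches the sign differences between $\zd_0$, $\zd_1$ and $\zd_2$.

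For the case $b_1<0$, I pass to the mirror image: under the orientation convention of Section~\ref{sect knots}, negating every entry of $\cfb$ reverses the sign of every crossing, so $C\cfb$ is the mirror of $C[-b_1,\ldots,-b_m]$ and hence $V_\cfb=\overline{V_{[-b_1,\ldots,-b_m]}}$. Since $[-b_1,\ldots,-b_m]$ has positive first entry, the case already proven applies; combining the resulting identity with the defining relation $\Fb=(-t^{-1})^{d+1}\overline{F_{[-b_1,\ldots,-b_m]}}$ from Definition~\ref{def F} and with the degree formula of Theorem~\ref{thm degree} yields the required identity for $\cfb$, the key bookkeeping being that negating every $b_i$ interchanges $+,+$ subsequences with $-,-$ subsequences in the type sequence, and the resulting shifts in $\tau$ and in the degree are absorbed exactly by the prefactor $(-t^{-1})^{d+1}$. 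The main obstacle will be the six-case verification in the inductive step: each sub-case is a direct application of Corollaries~\ref{cor 7} and~\ref{cor 8}, but the bookkeeping of the changes $\tau-\tau_1$ and $\tau-\tau_2$, along with the conversion between $[|b_m|]_q$ and $[|b_m|]_{\overline q}$ in the $(\ldots,+)$-branch, requires careful attention.
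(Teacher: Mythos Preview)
Your proposal is correct and follows essentially the same route as the paper's proof: induction on $m$, the base case by direct computation from Theorem~\ref{thm 5} and Proposition~\ref{prop F}/Definition~\ref{def F}, and the inductive step by matching the recursion of Theorem~\ref{thm 5} against that of Corollary~\ref{cor 8bis} through the six type sub-cases, with the degree and sign identities supplied by Corollaries~\ref{cor 7} and~\ref{cor 8}. The only cosmetic difference is that the paper dispatches $b_1<0$ at the outset of the inductive step by invoking the mirror relation $V_{[-b_1,\ldots,-b_m]}=\overline{V_{\cfb}}$ and declaring ``without loss of generality $b_1>0$'', whereas you spell out how Definition~\ref{def F} and the degree formula absorb the prefactor $(-t^{-1})^{d+1}$; your version is slightly more explicit but the argument is the same.
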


\begin{proof}
We proceed by induction on $m$. For $m=1$, we will show that $V_{[b_1]}= - t^jF_{b_1}$.
Suppose first that $b_1>0$. Then  $j=b_1+\frac{1}{2}$ and according to Definition \ref{def F} we have  $F_{b_1}= [b_1+1]_q-q$, with $q=-t^{-1}$.
On the other hand, Theorem \ref{thm 5} implies 
\[\begin{array}
 {rcl}
 V_{[b_1]}&=& t^{b_1}(-t^{-\frac{1}{2}}-t^{\frac{1}{2}})-t^{\frac{1}{2}}\,[b_1]_{\overline{q}}\\
 &=&-t^{b_1+\frac{1}{2}}(t^{-1}+1+t^{-b_1} (1-t+t^2-\cdots -t^{b_1-1}))\\
 &=&-t^{b_1+\frac{1}{2}}([b_1+1]_q-q),
\end{array}\]
and the result follows.

Now suppose that $b_1<0$. Then $j=-\frac{1}{2}$ and Definition \ref{def F} implies 
\[F_{b_1}= (-t^{-1})^{|b_1|} \overline{F_{|b_1|}}
=t^{-|b_1|}
[b_1+1]_{\overline{q}}\,-\overline{q}
= t^{-|b_1|}(1+t^2-t^3+t^4-\cdots+t^{|b_1|} )
= [|b_1|+1]_q-q^{|b_1|-1}.\]
On the other hand, Theorem \ref{thm 5} yields 
 \[\begin{array}
 {rcl}
 V_{[b_1]}&=& 
 t^{-|b_1|}(-t^{-\frac{1}{2}}-t^{\frac{1}{2}})-t^{-\frac{1}{2}}\,[\,|b_1|\,]_{{q}}\\
 &=&-t^{-\frac{1}{2}}(t^{-|b_1|}+t^{-|b_1|+1}+[\,|b_1|\,]_{{q}})\\
&=&-t^{-\frac{1}{2}}(\,[\,|b_1|+1]_q-q^{|b_1|-1}),
\end{array}\]
and the result follows.

Now suppose that $m>1$.
Replacing $\cfb$ with $[-b_1,-b_2,\ldots,-b_m]$ has the effect of replacing the link with its mirror image, and its Jones polynomial with its image under the bar involution, that is, $V_{[-b_1,-b_2,\ldots,-b_m]}=\overline{V_{\cfb}}$.
Therefore,  we may assume without loss of generality that $b_1>0$. 
If $m$ is even then $L$ is a knot and if $m$ is odd then $L$ is a 2-component link.

Now, from Theorem \ref{thm 5} we know that
 \[V_{\cfb}= 
 \left\{
\begin{array}
 {ll}
 t^{-|b_m|}\,V_{[b_1,b_2,\ldots,b_{m-2}]} -t^{-\frac{1}{2}}[\,|b_m|\,]_q\,V_{[b_1,b_2,\ldots,b_{m-1}]} 
&\textup{if $\type[b_1,\ldots,b_{m}]=(\ldots,-)$:}\\ \\
t^{|b_m|}\ V_{[b_1,b_2,\ldots,b_{m-2}]} -\ t^{\frac{1}{2}}\ [\,|b_m|\,]_{\overline {q}}\,V_{[b_1,b_2,\ldots,b_{m-1}]} 
&\textup{if $\type[b_1,\ldots,b_{m}]=(\ldots,+)$.}\\
\end{array}\right.
 \]
By induction, we have 
$V_{[b_1,b_2,\ldots,b_{m-i}]} =\zd_i \,t^{j_i}\, F_{b_1,b_2,\ldots,b_{m-i}}$, for $i=1,2.$ 
where $\zd_i\, t^{j_i}$ are the leading terms of the Jones polynomials and $\zd_i=\pm 1$. Therefore we get
\begin{equation}
\label{eq 73}
V_{\cfb}=\left\{ 
\begin{array} {ll}
 \zd_2 \,t^{j_2-|b_m|}\, F_{b_1,b_2,\ldots,b_{m-2}} 
 -\zd_1 \,t^{j_1-\frac{1}{2}}[\,|b_m|\,]_q\, F_{b_1,b_2,\ldots,b_{m-1}.} &\textup{in type $(\ldots,-)$;}\\ \\
 \zd_2 \,t^{j_2+|b_m|}\, F_{b_1,b_2,\ldots,b_{m-2}} 
 -\zd_1 \,t^{j_1+\frac{1}{2}}[\,|b_m|\,]_{\overline{q}}\, F_{b_1,b_2,\ldots,b_{m-1}.} &\textup{in type $(\ldots,+)$;}\\
\end{array}\right. 
\end{equation}

On the other hand, Corollary \ref{cor 8bis} implies that 
\begin{equation}\label{eq 74} \zd_0\, t^{j_0} \Fb= 
\zd_0\, t^{j_0}\,\mu(t)\,  F_{b_1,b_2,\ldots,b_{m-2}} +\zd_0\, t^{j_0}\,\nu(t)\,[\,|b_m|\,]_q\, F_{b_1,b_2,\ldots,b_{m-1}} \end{equation}
 where $\zd_0\, t^{j_0}$ is the leading term of $V_{\cfa}$ and the functions $\mu(t),\nu(t)$ depend on the type sequence of $\cfb$ as listed in the table in Corollary \ref{cor 8bis}. 
 We want to show that the expressions on the left hand side of equations (\ref{eq 73}) and (\ref{eq 74}) are equal, so it suffices to show   the equality of the expressions on the right hand side. To do so, we have to go through the different cases and show that the expressions in front of the $F$-polynomials are equal.
This is done using the tables of Corollary \ref{cor 8bis} (for $\nu(t)$ and $\mu(t)$),
Corollary \ref{cor 7} (comparing $j_0,j_1$ and $j_2$) and Corollary \ref{cor 8} (comparing $\zd_0$, $\zd_1$ and $\zd_2$).

In type $(\ldots,-,-)$ we have
$\nu(t)=1$, $\mu(t)=t^{-|b_m|+1}$, $j_0=j_1-1/2 =j_2-1$, and
$\zd_0=-\zd_1=\zd_2$. Therefore 
\[\zd_0\, t^{j_0}\,\mu(t) = \zd_2 \,t^{j_2-|b_m|} \textup{ and }\zd_0\, t^{j_0}\, \nu(t) = -\zd_1\, t^{j_1-\frac{1}{2}} \] 
as desired.

In type $(\ldots,-,+,-)$ we have
$\nu(t)=1$, $\mu(t)=t^{-|b_m|-|b_{m-1}|}$, $j_0=j_1-1/2 =j_2+|b_{m-1}|$, and
$\zd_0=-\zd_1=\zd_2$. Again
\[\zd_0\, t^{j_0}\,\mu(t) = \zd_2 \,t^{j_2-|b_m|} \textup{ and }\zd_0\, t^{j_0}\, \nu(t) = -\zd_1\, t^{j_1-\frac{1}{2}}. \] 

In type $(\ldots,+,+,-)$ we have
$\nu(t)=1$, $\mu(t)=-t^{-|b_m|-|b_{m-1}|+1}$, $j_0=j_1-1/2 =j_2+|b_{m-1}|-1$, and
$\zd_0=-\zd_1=-\zd_2$. Again
\[\zd_0\, t^{j_0}\,\mu(t) = \zd_2 \,t^{j_2-|b_m|} \textup{ and }\zd_0\, t^{j_0}\, \nu(t) = -\zd_1\, t^{j_1-\frac{1}{2}}. \] 

Now in type $(\ldots,-,+)$ we have
$\nu(t)=-t^{-1}$, $\mu(t)=1$, $j_0=j_1+|b_m|+1/2 =j_2+|b_m|$, and
$\zd_0=-\zd_1=\zd_2$. Therefore 
\[\zd_0\, t^{j_0}\,\mu(t) = \zd_2 \,t^{j_2+|b_m|} \] 
which agrees with the first term on the right hand side of equation (\ref{eq 73}). For the second term we get \[\zd_0\, t^{j_0}\, \nu(t) \,[\,|b_m|\,]_q = \zd_1\, t^{j_1+|b_m|-\frac{1}{2}}\,[\,|b_m|\,]_q = -\zd_1\, t^{j_1+\frac{1}{2}}\,[\,|b_m|\,]_{\overline{q}},  \] where the last equation follows from $ -t^{|b_m|-1}\,[\,|b_m|\,]_q =[\,|b_m|\,]_{\overline{q}}$. This expression agrees with the second term in  (\ref{eq 73}).

In type $(\ldots,-,+,+)$ we have
$\nu(t)=1$, $\mu(t)=-t^{-|b_{m-1}|}$, $j_0=j_1+|b_m|-1/2 =j_2+|b_m|+ |b_{m-1}|$, and
$\zd_0=\zd_1=-\zd_2$. Again
\[\zd_0\, t^{j_0}\,\mu(t) = \zd_2 \,t^{j_2+|b_m|} \textup{ and }\zd_0\, t^{j_0}\, \nu(t) \,[\,|b_m|\,]_q= \zd_1\, t^{j_1+|b_m|-\frac{1}{2}}  \,[\,|b_m|\,]_q
=- \zd_1\, t^{j_1+\frac{1}{2}}  \,[\,|b_m|\,]_{\overline{q}} \] 
as desired.

Finally, in type $(\ldots,+,+,+)$ we have
$\nu(t)=1$, $\mu(t)=t^{-|b_{m-1}|+1}$, $j_0=j_1+|b_m|-1/2 =j_2+|b_m|+ |b_{m-1}|-1$, and
$\zd_0=\zd_1=\zd_2$. Again we have
\[\zd_0\, t^{j_0}\,\mu(t) = \zd_2 \,t^{j_2+|b_m|} \textup{ and }\zd_0\, t^{j_0}\, \nu(t) \,[\,|b_m|\,]_q= \zd_1\, t^{j_1+|b_m|-\frac{1}{2}}  \,[\,|b_m|\,]_q
=- \zd_1\, t^{j_1+\frac{1}{2}}  \,[\,|b_m|\,]_{\overline{q}}. \] 
This proves the identity in the theorem. Since, by Lemma \ref{lemF}, the degree of $\Fb$ is zero, we see that $j$ is the degree of the Jones polynomial. The explicit formulas for $j$   and for the sign $ (-1)^{m-\tau}$  in the statement were already proved in Theorem \ref{thm degree}. 
\end{proof}

 We say that Laurent polynomial in $t$ is {\em alternating} if it can be written as $\pm\sum_{i=m}^M (-1)^i a_i t^i$ with $m,M\in \mathbb{Z}$ and $a_i\in \mathbb{Z}_{\ge 0}$.
The \emph{width} of a Laurent polynomial is the difference between the highest and the lowest exponent. For example $V_{[3]}=t^{-1}+t^{-3}-t^{-4}$ is alternating and has width $-1-(-4)=3$. 
\begin{cor}\label{cor j}
 The Jones polynomial $V_{\cfa}$ is an alternating sum, its width is $a_1+a_2+\cdots+a_n$ and the first and the last coefficient has absolute value 1.
\end{cor}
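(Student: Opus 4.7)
The plan is to derive all three assertions directly from Theorem \ref{thm main} combined with the structural description of the specialized $F$-polynomial in Lemma \ref{lemF}(b). Given the positive continued fraction $\cfa = p/q$, first choose an even continued fraction $\cfb$ whose value is either $p/q$ or $p/(p-q)$; by Proposition \ref{prop 01} at least one of these two rationals admits an even expansion, and by the discussion in section \ref{sect knots} the corresponding $2$-bridge links are isotopic, so $V_{\cfa} = V_{\cfb}$. Theorem \ref{thm main} then yields $V_{\cfb} = \pm\, t^{j}\, \Fb$ for the explicit half-integer exponent $j$.

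Next, invoke Lemma \ref{lemF}(b), which gives
\[
\Fb \;=\; \sum_{i=0}^{d+1}\,(-1)^i\,\sigma_i\, t^{-i}, \qquad \sigma_i \in \mathbb{Z}_{\geq 0},\quad \sigma_0 = \sigma_{d+1} = 1,
\]
where $d = a_1 + a_2 + \cdots + a_n - 1$ is the common number of tiles of $\calg\cfa \cong \calg\cfb$ by Theorem \ref{thm 1}. In particular $\Fb$ is alternating, it has width $d+1$, and its extreme coefficients are $\pm 1$.

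Finally, multiplication by the unit $\pm\, t^{j}$ is a monomial shift together with a possible global sign flip, and so preserves the alternating sign pattern, the width, and the absolute values of the leading and trailing coefficients. Hence $V_{\cfa}$ is an alternating sum with width $d+1 = a_1 + a_2 + \cdots + a_n$, and its first and last coefficients have absolute value $1$. No real obstacle is expected: the corollary is essentially a direct read-off once Theorem \ref{thm main} and Lemma \ref{lemF} are in place, the only small point to verify being that the extreme coefficients of $\Fb$ are exactly $\pm 1$, which follows from the fact that the constant term of $F\cfa$ comes from the empty monomial and the top term is $y_{i_1}\cdots y_{i_d}$ (Lemma \ref{lemF}(a)) and $y_1$ appears in a unique tile by our choice of cluster algebra.
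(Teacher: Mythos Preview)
Your proposal is correct and follows essentially the same approach as the paper, which simply states that the corollary follows from Theorem~\ref{thm main} using Lemma~\ref{lemF}. You have merely spelled out the details of this one-line argument, including the passage to an even continued fraction and the identification $d+1=a_1+\cdots+a_n$.
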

\begin{proof}
 This follows from Theorem \ref{thm main} using Lemma \ref{lemF}.
\end{proof}

\begin{remark}
For arbitrary knots, the Jones polynomial is not necessary alternating. For example, the Jones polynomial of  the knot  $10_{124}$ from Rolfsen's table is
$-q^{10}+q^6+q^4$.
\end{remark}

Combining Theorem \ref{thm main} and Proposition \ref{prop F}, we obtain the following direct formula for the Jones polynomial.
\begin{thm}\label{thm main2} Let $\cfa$ be a positive continued fraction. Then up to normalization by its leading term, $V_{\cfa}$ is equal to the numerator of the following continued fraction 

\begin{itemize}
\item [{\rm(a)}] If $n$ is odd, 
 \[\Big[\, [a_1+1]_q -q\ ,\  [a_2]_q \,q^{-\ell_2} ,\ [a_3]_q\,q^{\ell_2+1} ,\ldots, [a_{2i}]_q\, q^{-\ell_{2i}},\ [a_{2i+1}]_q\, q^{\ell_{2i}+1} ,\ldots , \ [a_{n}]_q\, q^{\ell_{n-1}+1}\Big]. \]
 
 
\item [{\rm(b)}] If $n$ is even, 
\[q^{\ell_n}\,\Big[\, [a_1+1]_q -q\ ,\  [a_2]_q \,q^{-\ell_2} ,\ [a_3]_q\,q^{\ell_2+1} ,\ldots, [a_{2i}]_q\, q^{-\ell_{2i}},\ [a_{2i+1}]_q\, q^{\ell_{2i}+1} ,\ldots , \ [a_{n}]_q\, q^{-\ell_{n}}\Big]. \]
\end{itemize}
\end{thm}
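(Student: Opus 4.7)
The plan is to combine Theorem \ref{thm main} with Proposition \ref{prop F}. Proposition \ref{prop F} already identifies the numerator of the displayed continued fraction in (a) or (b) with the specialized $F$-polynomial $\Fa$. Since the theorem is stated up to normalization by the leading term, it therefore suffices to prove an identity of the form $V_{\cfa} = \pm\, t^k\, \Fa$ for some half-integer $k$.

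Write $p/q = \cfa$. By Proposition \ref{prop 01}, either $p/q$ admits an even continued fraction $\cfb$ (when $p$ or $q$ is even) or $p/(p-q)$ does (when both $p$ and $q$ are odd, in which case the positive expansion of $p/(p-q)$ is $[1,a_1-1,a_2,\ldots,a_n]$). In the first case I choose $\cfb$ with $b_1>0$; the snake graph isomorphism $\calg\cfa \cong \calg\cfb$ from Theorem \ref{thm 1} sends the first tile to the first tile, so $\Fa = \Fb$ by Definition \ref{def F}. The oriented links $C\cfa$ and $C\cfb$ are isotopic (since the defining continued fractions have the same value), hence $V_{\cfa}=V_{\cfb}$, and Theorem \ref{thm main} concludes. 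In the both-odd case, applying the previous argument to $[1,a_1-1,a_2,\ldots,a_n]$ gives $V_{\cfb} = \pm t^j F_{[1,a_1-1,a_2,\ldots,a_n]}$. I then combine (i) the mirror-image identity $V_{\cfa} = \overline{V_{\cfb}}$ (with bar sending $t^{1/2}$ to $t^{-1/2}$), coming from the fact that $C\cfa$ and $C\cfb$ are oriented mirror images under the conventions of Section \ref{sect knots}, with (ii) Proposition \ref{prop Fbar}, which says $\Fa = q^{d+1}\,\overline{F_{[1,a_1-1,a_2,\ldots,a_n]}}$. Applying the bar involution to the identity for $V_{\cfb}$ and substituting (ii) yields the desired $V_{\cfa} = \pm t^k\, \Fa$.

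The step I expect to be the main obstacle is claim (i) in the both-odd case: that $C\cfa$ and $C\cfb$ are oriented mirror images. The unoriented knots are known to be isotopic since $C(p/q) \cong C(p/(p-q))$ when $p$ is odd, but the orientation conventions of Section \ref{sect knots} must be tracked carefully through the substitution $p/q \mapsto p/(p-q)$ to confirm that its net effect on the diagram is precisely a reflection. Small-case sanity checks (for instance $V_{[3]} = -t^{-4}+t^{-3}+t^{-1}$ against $V_{[1,2]} = V_{[2,-2]} = -t^4+t^3+t$) support this, but a clean geometric argument would make the reduction uniform across both cases.
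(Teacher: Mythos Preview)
Your approach is the same as the paper's: the paper's entire proof is the one sentence ``Combining Theorem \ref{thm main} and Proposition \ref{prop F}, we obtain the following direct formula for the Jones polynomial.'' You have simply made explicit the bridging step between positive and even continued fractions that the paper leaves implicit.

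Your treatment of the both-odd case via mirror images is correct and matches what the paper actually does in its worked examples (see Section~7.2, where the trefoil $C[3]$ is identified with $C[-2,2]$, not $C[2,-2]$). An equivalent and slightly cleaner packaging avoids the separate geometric mirror claim: take $\cfb$ to be the even continued fraction with \emph{negative} first entry, i.e.\ with value $-p/(p-q)$. Then Definition~\ref{def F} gives $\Fb=(-t^{-1})^{d+1}\,\overline{F_{1,a_1-1,a_2,\ldots,a_n}}$, which is exactly $\Fa$ by Proposition~\ref{prop Fbar} (this is the computation in Example~\ref{ex 6.4}, where $F_{-2,2}=F_3$). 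Theorem~\ref{thm main} then yields $V_{\cfb}=\pm t^j\,\Fa$ directly, and one only needs the oriented identification $C\cfa\cong C\cfb$ that the paper uses in its examples. Algebraically this is the same as your route, since $V_{[-b_1,\ldots,-b_m]}=\overline{V_{[b_1,\ldots,b_m]}}$; it just folds the case split into Definition~\ref{def F} rather than into a statement about mirrors. Your identification of this step as the one needing care is accurate --- the paper's Lemma~\ref{lem 4} glosses over precisely this point.
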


 
\begin{cor}
If the continued fraction has only one coefficient, we have 
\begin{eqnarray}
 V_{[2a+1]}& =& 
 \ t^{-a} \ (1+t^{-2}-t^{-3}+t^{-4}\cdots-t^{-2a-1}) ,\\
 V_{[-2a]}  
 &=&  -t^{ -\frac{1}{2}}\ (1+t^{-2}-t^{-3}+t^{-4}\cdots-t^{-2a-1}), \\
 V_{[2a]} 
 &=& -t^{ 2a+\frac{1}{2}}  \,
 (1+t^{-2}-t^{-3}+t^{-4}\cdots-t^{-2a-1}).
\end{eqnarray}

\end{cor}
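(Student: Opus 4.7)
The plan is to read off each identity from an $m=1$ or $n=1$ specialization of the machinery already developed, with the three cases separated by the sign of the single entry and by whether it is even or odd.

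For the two even-entry cases $V_{[2a]}$ and $V_{[-2a]}$ I would appeal directly to the $m = 1$ base case of Theorem \ref{thm 5}. For $b_1 = 2a > 0$ this yields
\[V_{[2a]} \;=\; t^{2a}(-t^{-1/2} - t^{1/2}) \;-\; t^{1/2}\,[2a]_{\overline{q}},\]
and the plan is to factor out $-t^{2a+1/2}$, expand $[2a]_{\overline{q}} = 1 - t + t^2 - \cdots - t^{2a-1}$, and verify that the two contributions at degree $3/2$ cancel, leaving the alternating polynomial from the statement. The case $b_1 = -2a$ is strictly parallel from
\[V_{[-2a]} \;=\; t^{-2a}(-t^{-1/2} - t^{1/2}) \;-\; t^{-1/2}\,[2a]_q,\]
by factoring $-t^{-1/2}$ and expanding $[2a]_q = 1 - t^{-1} + \cdots - t^{-2a+1}$.

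For $V_{[2a+1]}$ the continued fraction $[2a+1]$ is positive but not even, so I would invoke Theorem \ref{thm main2}(a) with $n = 1$. It says that $V_{[2a+1]}$ equals, up to normalization by its leading term,
\[N\bigl[\,[2a+2]_q - q\,\bigr] \;=\; 1 + q^2 + q^3 + \cdots + q^{2a+1}.\]
Substituting $q = -t^{-1}$ produces the alternating polynomial $1 + t^{-2} - t^{-3} + \cdots - t^{-(2a+1)}$ featured in the statement. To pin down the precise leading-term monomial $t^{-a}$, I would apply Theorem \ref{thm degree} to the associated even continued fraction $[2,-2,2,-2,\ldots]$ of length $2a$ supplied by Example \ref{ex 1}, which represents the same link as $[2a+1]$ by the $p$-odd clause of Lemma \ref{lem 4}. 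Its type sequence is the constant string $(+,+,\ldots,+)$, so $\tau = 2a - 1$ and $m - \tau = 1$, making the sign prefactor of Theorem \ref{thm main} equal to $-1$; this combines with the degree formula of Theorem \ref{thm degree} to produce the monomial $+t^{-a}$.

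The only nontrivial bookkeeping lies in the $V_{[2a+1]}$ case, where one must chase the monomial $t^{-a}$ through the associated even continued fraction. The two even-entry cases reduce to a one-line algebraic simplification once the appropriate clause of Theorem \ref{thm 5} has been applied.
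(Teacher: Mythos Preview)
Your treatment of the two even cases via the $m=1$ clause of Theorem~\ref{thm 5} is fine and essentially reproduces the base case of the proof of Theorem~\ref{thm main}; this is a slightly more direct route than the paper's one-line citation of Theorem~\ref{thm main} and Proposition~\ref{prop F}.

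The $V_{[2a+1]}$ case, however, does not go through as written. First, a minor point: Lemma~\ref{lem 4} has no ``$p$-odd clause''; the isotopy $C(p/q)\cong C(p/(p-q))$ for $p$ odd is recorded in Section~\ref{sect knots}, not in Lemma~\ref{lem 4}. More seriously, the even continued fraction $[2,-2,2,\ldots]$ of length $2a$ from Example~\ref{ex 1} picks out the \emph{mirror} of the link whose Jones polynomial is displayed in the corollary. Its type sequence is $(+,+,\ldots,+)$, so as you note $\tau=2a-1$ and the sign prefactor is $-1$; but if you actually run the degree formula of Theorem~\ref{thm degree} you get $j = \tfrac{5}{2} + (2a-1)\tfrac{3}{2} = 3a+1$, not $-a$. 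Thus Theorem~\ref{thm main} applied to $[2,-2,\ldots]$ yields $-t^{3a+1}F_{2,-2,\ldots}$, which for $a=1$ is $-t^4+t^3+t$, the bar-image of the stated $V_{[3]}=t^{-1}+t^{-3}-t^{-4}$.

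The fix is to use the even continued fraction $[-2,2,-2,\ldots]$ instead (compare the trefoil computation in Section~7.3.2, where the paper writes $C[3]=C[-2,2]$). Its type sequence is $(-,-,\ldots,-)$, so $\tau=0$, the sign is $(-1)^{2a}=+1$, and each summand in the degree formula is $-\tfrac12$, giving $j=-a$ as required. Equivalently, and closer to the paper's own proof, one can simply invoke Proposition~\ref{prop F}(a) with $n=1$, $a_1=2a+1$ to obtain $F_{2a+1}=[2a+2]_q-q$ and then read the normalization off Theorem~\ref{thm main}.
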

\begin{proof}
 This follows immediately from Theorem \ref{thm main} and Proposition \ref{prop F}.
\end{proof}

\subsection{Coefficients of the Jones polynomial}
Our realization of the Jones polynomial in terms of snake graphs allows us to compute the coefficients directly. We give here  formulas for the first three and the last three coefficients.

 Let $\cfa$ be a positive continued fraction with $a_1\ge 2$ and $a_n\ge 2$,  and let $\ell_i=a_1+a_2+\cdots +a_i$. By Corollary \ref{cor j},  the Jones polynomial is of the form
 \[V_{\cfa}= \pm\, t^j\,\sum_{i=0}^{\ell_n} (-1)^i\, v_i\, t^i ,\] 
 with $v_i\in \mathbb{Z}_{\ge 0}$. 
 We denote by $\za$  the cardinality of the set 
 $\{i \mid a_i =1\}$, and, using the Kronecker delta notation,  we let $\zd_{a_i,2}= 1$ if $a_i=2$ and $\zd_{a_i,2}= 0$ if $a_i\ne2$

\begin{thm}\label{thm coeff} We have the following formulas for the coefficients of the Jones polynomial
 \[
\begin{array}
 {rclcrcl}
 v_0 &=& 1 \\v_{\ell_n} &=& 1 \\
\\
 v_1 &=& \left\{\begin{array}{ll} 
 k  &\textup{if $n=2k+1$;} \\[5pt]
 k &\textup{if $n=2k$.}  \end{array}\right. 
\\ \\
  v_{\ell_n-1} &=& \left\{\begin{array}{ll} 
 k+1  &\textup{if $n=2k+1$;} \\[5 pt]
 k &\textup{if $n=2k$.}\\  \end{array}\right.\\ 
 \\
v_2&=& \left\{\begin{array}{ll} \frac{1}{2} (k+1)(k+2) -\za&\textup{if $n=2k+1$;} \\[5pt]
\frac{1}{2} \,k(k+3)-\za &\textup{if $n=2k$.}
 \end{array}\right.
 \\ \\
 v_{\ell_n-2}&=& \left\{\begin{array}{ll} \frac{1}{2} (k^2+5k+2)-\za-\zd_{a_1,2}-\zd_{a_n,2} &\textup{if $n=2k+1$;} \\[5pt]
\frac{1}{2}\, k(k+3) -\za-\zd_{a_1,2} &\textup{if $n=2k$.}
 \end{array}\right.\\
\end{array}\]
\end{thm}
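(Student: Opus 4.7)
My plan is to identify the coefficients $v_i$ with the coefficients of the specialized $F$-polynomial $\Fa(q) = \sum_k c_k q^k$ in the variable $q = -t^{-1}$, and then compute the required $c_k$ directly from the closed-form expression of Proposition \ref{prop F}.

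First, I would show that $v_i = c_i$ for all $i$. Applying Theorem \ref{thm main} to the even continued fraction $\cfb$ with $\calg\cfb \cong \calg\cfa$ (provided by Theorem \ref{thm 1}), one obtains $V_\cfa = \pm t^{\deg V} F_\cfb$, and Definition \ref{def F} together with Proposition \ref{prop Fbar} identifies $F_\cfb$ with $\Fa$ up to the $q^{\ell_n}$-twisted bar involution $\overline{(\ )}$. Writing $V_\cfa = \pm t^j \sum_i (-1)^i v_i t^i$ and comparing coefficients of $t$ on both sides yields $v_i = c_i$ in all the cases that arise. By Lemma \ref{lemF}, $\Fa$ has non-negative integer coefficients with constant term $1$ (from the minimum matching $P_-$) and leading term $1$ (from the maximum matching $P_+$), which immediately gives $v_0 = v_{\ell_n} = 1$.

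Second, I would compute $c_1$ and $c_{\ell_n - 1}$ via the continuant recursion $N[\call_1, \ldots, \call_n] = \call_n N[\call_1, \ldots, \call_{n-1}] + N[\call_1, \ldots, \call_{n-2}]$ from Proposition \ref{prop F}. The key observation is that $\call_1 = 1 + q^2 + q^3 + \cdots + q^{a_1}$ has no $q^1$-term (the precise purpose of the $-q$ subtraction), and each $\call_i = [a_i]_q q^{\pm \ell_\star}$ for $i \geq 2$ is supported on a narrow range of $q$-powers.  Tracking which combinations of terms in the continuant expansion contribute to the $q^1$-coefficient at the low end (and to the $q^{\ell_n - 1}$-coefficient at the high end, further shifted by $q^{\ell_n}$ when $n$ is even) leads to counts of exactly $\lfloor n/2 \rfloor$ and $\lceil n/2 \rceil$, matching the values $k$ and $k+1$ in the theorem. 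A useful cross-check is the identity $v_1 + v_{\ell_n - 1} = n$, which corresponds to the combined atom-plus-coatom count of the distributive lattice of perfect matchings of $\calg\cfa$.

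Third, I would extend the analysis to $c_2$ and $c_{\ell_n - 2}$. Here contributions come either from a single $q^2$-term in some $\call_i$ (in particular from $\call_1$, which always contains $q^2$ when $a_1 \geq 2$) or from a product of two $q^1$-terms from distinct $\call_i$'s, yielding a quadratic pair count of the form $\tfrac{1}{2}(k+1)(k+2)$ or $\tfrac{1}{2}k(k+3)$ depending on the parity of $n$. The correction $-\alpha$ arises because whenever $a_i = 1$ the polynomial $[a_i]_q$ collapses to $1$, removing one potential $q^1$-contribution from $\call_i$ and hence one admissible pair. The boundary corrections $-\delta_{a_1, 2}$ and $-\delta_{a_n, 2}$ reflect the special form of $\call_1 = [a_1+1]_q - q$, which produces an extra $q^2$-term coinciding with another contribution exactly when $a_1 = 2$, and, via the $q^{\ell_n}$ prefactor in the even-$n$ case, the analogous collision at the other end controlled by $a_n = 2$. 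The main technical obstacle is the parity-dependent bookkeeping: Proposition \ref{prop F} treats $n$ odd and $n$ even asymmetrically through the $q^{\ell_n}$ prefactor, and the two endpoints $\call_1$ and $\call_n$ have genuinely different shapes, so the $c_1$/$c_2$ versus $c_{\ell_n - 1}$/$c_{\ell_n - 2}$ computations are not perfectly symmetric. I expect the most delicate step to be the quadratic enumeration for $c_2$ and $c_{\ell_n - 2}$, where one must carefully avoid double-counting admissible pairs and correctly identify the collisions producing the $\delta_{a_i, 2}$ corrections; small-case verification on continued fractions such as $[2,3]$, $[3,2]$, and examples with some $a_i = 1$ should pin down the formulas and confirm the parity matching throughout.
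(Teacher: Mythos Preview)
Your overall strategy---reduce to the coefficients of $\Fa$ via Theorem~\ref{thm main} and then read them off from the continuant formula of Proposition~\ref{prop F}---is the same starting point as the paper, but the execution diverges. The paper argues by \emph{induction on $n$}: it rewrites the continuant as $[a_n]_q\, q\, F_{a_1,\ldots,a_{n-1}} + F_{a_1,\ldots,a_{n-2}}$ (odd case) or $[a_n]_q\, F_{a_1,\ldots,a_{n-1}} + q^{a_n+a_{n-1}} F_{a_1,\ldots,a_{n-2}}$ (even case), matches the first three and last three coefficients on each side to obtain linear recursions such as $v_1 = 1 + v_1''$, $v_{\ell_n-2} = v'_{\ell_{n-1}-2} + v'_{\ell_{n-1}-1} + 1 - \zd_{a_n,2}$, and then plugs in the inductive values. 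This two-term recursion keeps the bookkeeping essentially one-dimensional. Your plan instead is a \emph{direct} enumeration: expand the continuant fully and count which monomials land on $q^1$, $q^2$, $q^{\ell_n-1}$, $q^{\ell_n-2}$. That is in principle workable and has the conceptual benefit of explaining the identity $v_1 + v_{\ell_n-1} = n$ as an atom/coatom count, but the quadratic case ($c_2$, $c_{\ell_n-2}$) becomes a genuine double sum over pairs of $\call_i$'s with support conditions, which is exactly the ``delicate step'' you flag; the paper's induction avoids this entirely.

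One concrete point to correct: your attribution of the $-\zd_{a_n,2}$ term is off. You say it arises ``via the $q^{\ell_n}$ prefactor in the even-$n$ case,'' but in the theorem $\zd_{a_n,2}$ appears only when $n$ is \emph{odd}, and in the paper's proof it comes from the fact that when $a_n=2$ the factor $[a_n]_q\, q$ in front of $F_{a_1,\ldots,a_{n-1}}$ has top degree $2$ rather than $\ge 3$, so one expected contribution to $v_{\ell_n-2}$ is missing. If you pursue the direct-count route you will need to locate this correction in the shape of $\call_n = [a_n]_q\, q^{\ell_{n-1}+1}$ at the top end, not in the even-$n$ prefactor.
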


\begin{proof}
 According to Theorem \ref{thm main}, the coefficients $v_i$ are the same as the coefficients of the specialized $F$-polynomial, which in turn are determined by the continued fraction formula in Proposition \ref{prop F}. Suppose first that $n=2k+1$ is odd. Then we must compute the coefficients of 
\begin{equation}
\label{eq 71}
 N\Big[\, [a_1+1]_q -q\ ,\  [a_2]_q \,q^{-\ell_2} ,\ldots, [a_{2i}]_q\, q^{-\ell_{2i}},\ [a_{2i+1}]_q\, q^{\ell_{2i}+1} ,\ldots , \ [a_{n}]_q\, q^{\ell_{n-1}+1}\Big]  
\end{equation}
 which by definition (\ref{def N}) is equal to 
 \begin{eqnarray} \label{eq 71a}
 & [a_{n}]_q\, q^{\ell_{n-1}+1} \  N\Big[\, [a_1+1]_q -q\ ,\  [a_2]_q \,q^{-\ell_2} ,\ldots, \ [a_{n-1}]_q\, q^{-\ell_{n-1}}\Big]
\\
 +&N\Big[\, [a_1+1]_q -q\ ,\  [a_2]_q \,q^{-\ell_2} ,\ldots, \ [a_{n-1}]_q\, q^{\ell_{n-2}+1}\Big].\nonumber\end{eqnarray}
Using parts (a) and (b) of Proposition \ref{prop F}, we see that (\ref{eq 71a})  is equal to  
\begin{equation}
 \label{eq 72}
[ a_{n}]_q\, q\,  F_{a_1,\ldots, a_{n-1}}+ F_{a_1,\ldots, a_{n-2}}.
\end{equation}
 We let $v_i'$ and $ v_i''$ denote the coefficients of $F_{a_1,\ldots, a_{n-1}}$ and $F_{a_1,\ldots, a_{n-2}}$ respectively, and  recall that $\ell_{n-1}$ and $\ell_{n-2}$ are their degrees.
With this notation (\ref{eq 72})  becomes
\[\begin{array}{ll}[a_n]_q\,q\,(1+v_1'\,q+v_2'\,q^2+\ldots+v_{\ell_{n-1}-2}''\,q^{\ell_{n-1}-2}+v_{\ell_{n-1}-1}'\,q^{\ell_{n-1}-1}+q^{\ell_{n-1}})\\[5pt]
+1+v_1''\,q+v_2''q^2 +\ldots+ v_{\ell_{n-2}-2}''\, q^{\ell_{n-2}-2}+ v_{\ell_{n-2}-1}''\, q^{\ell_{n-2}-1}+q^{\ell_{n-2}}
\end{array}\]
and this shows that
$v_0=v_{\ell_n}=1$, and
\[ 
\begin{array}
 {rclcrcl}
 v_1&=&1+v_1'' &\qquad& v_2&=&1+v_1' +v_2''\\
 v_{\ell_n-1}&=& 1+v_{\ell_{n-1}-1}' && v_{\ell_n-2}&=& v_{\ell_{n-1}-2}'+v_{\ell_{n-1}-1}'+1-\zd_{a_n,2}.
\end{array}
\]
Applying our induction hypothesis, we get
\[ 
\begin{array}
 {rclcrcl}
 v_1&=&1+(n-3)/2=k &\qquad& v_2&=&1+k +k(k+1)/2-\za\\
 v_{\ell_n-1}&=& 1+k && v_{\ell_n-2}&=& k(k+3)/2-\za-\zd_{a_1,2}+k+1-\zd_{a_n,2}.
\end{array}
\]
This implies the result for $v_1$ and $v_{\ell_n-1}$ directly, and for $v_2$ it follows from the computation
$1+k +k(k+1)/2 =(k^2+3k+2)/2=(k+1)(k+2)/2$, and for $v_{\ell_n-2}$ it follows from the computation
$1+k +k(k+3)/2 =(k^2+5k+2)/2$.

Now suppose $n=2k$ is even. In this case,  we must compute the coefficients of 
\begin{equation}
\label{eq 77}
q^{\ell_n}\, N\Big[\, [a_1+1]_q -q\ ,\  [a_2]_q \,q^{-\ell_2} ,\ldots, [a_{2i}]_q\, q^{-\ell_{2i}},\ [a_{2i+1}]_q\, q^{\ell_{2i}+1} ,\ldots , \ [a_{n}]_q\, q^{-\ell_{n}}\Big]  
\end{equation}
 which by definition is equal to 
 \begin{eqnarray}\label{eq 77a}
 & [a_{n}]_q \,  N\Big[\, [a_1+1]_q -q\ ,\  [a_2]_q \,q^{-\ell_2} ,\ldots, \ [a_{n-1}]_q\, q^{\ell_{n-1}+1}\Big]
\\
 +&q^{\ell_n}\,N\Big[\, [a_1+1]_q -q\ ,\  [a_2]_q \,q^{-\ell_2} ,\ldots, \ [a_{n-1}]_q\, q^{-\ell_{n-2}}\Big].\nonumber\end{eqnarray}
Again using parts (a) and (b) of Proposition \ref{prop F}, we see that (\ref{eq 77a})  is equal to  
\begin{equation}
 \label{eq 78}
[ a_{n}]_q\  F_{a_1,\ldots, a_{n-1}}+ q^{\ell_n-\ell_{n-2}}\,F_{a_1,\ldots, a_{n-2}},
\end{equation}
and since $\ell_n-\ell_{n-2}=a_n+a_{n-1}$, using the same notation as before, this becomes
\[\begin{array}{ll}
[a_n]_q\ 
(1+v_1'\,q+v_2'\,q^2+\ldots+v_{\ell_{n-1}-2}''\,q^{\ell_{n-1}-2}+v_{\ell_{n-1}-1}'\,q^{\ell_{n-1}-1}+q^{\ell_{n-1}})\\[5pt]
+ q^{a_n+a_{n-1}}(1+v_1''\,q+v_2''\,q^2 \ldots+ v_{\ell_{n-2}-2}''\, q^{\ell_{n-2}-2}+ v_{\ell_{n-2}-1}''\, q^{\ell_{n-2}-1}+q^{\ell_{n-2}}).
\end{array}\]
This shows that
$v_0=v_{\ell_n}=1$, and, since $\ell_{n}=\ell_{n-1}+a_n=\ell_{n-2}+a_n+a_{n-1}$,
\[ 
\begin{array}
 {rclcrcl}
 v_1&=&1+v_1' &\qquad& v_2&=&1+v_1' +v_2'\\
 v_{\ell_n-1}&=& 1+v_{\ell_{n-2}-1}'' && v_{\ell_n-2}&=&1+ v_{\ell_{n-1}-1}'+v_{\ell_{n-2}-2}''.
\end{array}
\]
Applying our induction hypothesis, we get
\[ 
\begin{array}
 {rclcrcl}
 v_1&=&1+k-1=k &\qquad& v_2&=&1+k-1 +k(k+1)/2-\za\\
 v_{\ell_n-1}&=& 1+k-1=k && v_{\ell_n-2}&=& 1+ k+ (k-1)(k+2)/2-\za-\zd_{a_1,2}.
\end{array}
\]
This implies the result for $v_1$ and $v_{\ell_n-1}$ directly, and for $v_2$ it follows from the computation
$k +k(k+1)/2=(k^2+3k)/2=k(k+3)/2$, and for $v_{\ell_n-2}$ it follows from the computation
$ 1+ k+ (k-1)(k+2)/2=(k^2+3k)/2$.
\end{proof}

\begin{example} Suppose the continued fraction $\cfa$ is such that all $a_i\ge 2$ and $a_1, a_n\ge 3$. Thus $\za=\zd_{a_1,2}=\zd_{a_n,2}=0$.
 For these cases the  values of the coefficients for various $n$   are listed in the following table.
 \smallskip
 
 \begin{center}
  \begin{tabular}{  |c ||c|c|c|c| c|c | c |}
\hline $n$&$\ v_0\ $&$\ v_1\ $&$\ v_2\ $&$\cdots$&$v_{\ell_n-2}$&$ v_{\ell_n-1} $&$\, v_{\ell_n}\, $\\ \hline 
1 &1 & 0&1&&1&1&1\\\hline
2&1&1&2&&2&1&1\\\hline
3&1&1&3&&4&2&1\\\hline
4&1&2&5&&5&2&1\\\hline
5&1&2&6&&8&3&1\\\hline
6&1&3&9&&9&3&1\\\hline
7&1&3&10&&13 &4&1\\\hline
8&1&4&14&&14 &4&1\\\hline
9&1&4&15&&19 &5&1\\\hline
10&1&5&20&&20 &5&1\\\hline
\hline
  \end{tabular}
\end{center}
\end{example}

Using a result from \cite{FKP}, which gives bounds for the volume of  a knot in terms of $v_1 $ and $v_{\ell_n-1}$, we have the following corollary.
\begin{cor}
 \label{cor vol} Let $C=C\cfa$ be a 2-bridge link such that $a_i\ge3$ for all $i$. Then the volume of its complement is bounded as follows.
 \[0.35367 (n-2) < \textup{vol}(S^3\setminus C) < 30 v_3 (n-1).\]
 where $v_3\sim 1.0149$ is the volume of a regular ideal tetrahedron.
\end{cor}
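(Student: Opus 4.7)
The plan is to combine the Futer--Kalfagianni--Purcell theorem of \cite{FKP} with the explicit formulas for the coefficients of the Jones polynomial established in Theorem \ref{thm coeff}. The FKP theorem bounds the hyperbolic volume of a hyperbolic alternating link $K$ above and below by linear expressions in the absolute values of the second and second-to-last coefficients of its Jones polynomial, which in our notation are $v_1$ and $v_{\ell_n-1}$. The two stated constants $0.35367$ and $30 v_3$ arise from the FKP bound of the form
\[0.35367\bigl(v_1+v_{\ell_n-1}-2\bigr) \;<\; \textup{vol}(S^3\setminus C) \;<\; 30\,v_3\bigl(v_1+v_{\ell_n-1}-1\bigr).\]

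Before applying FKP, I would verify its hypotheses. Since $a_i \geq 3 \geq 2$ for every $i$, the standard positive diagram of $C=C\cfa$ depicted in Figure \ref{knotschema} is reduced and alternating. Moreover, $C\cfa$ is hyperbolic as soon as $n\geq 2$, because the only non-hyperbolic 2-bridge links are the $(2,q)$-torus links, which correspond to continued fractions of length $n = 1$; for $n=1$ both sides of the asserted inequality degenerate and the statement is vacuous.

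The remaining computation is a direct application of Theorem \ref{thm coeff}. When $n = 2k+1$ one reads off $v_1 = k$ and $v_{\ell_n-1} = k+1$, so that $v_1 + v_{\ell_n-1} = 2k+1 = n$; when $n = 2k$, one has $v_1 = v_{\ell_n-1} = k$, again giving $v_1 + v_{\ell_n-1} = n$. Substituting the identity $v_1+v_{\ell_n-1}=n$ into the FKP bounds yields exactly the desired inequalities. The main obstacle is locating and accurately quoting the precise form of the FKP theorem with these specific constants, since from our side the contribution is purely the combinatorial evaluation of the second and penultimate coefficients of $V_{\cfa}$ afforded by Theorem \ref{thm coeff}.
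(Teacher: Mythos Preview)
Your proposal is correct and follows essentially the same approach as the paper: invoke the FKP result (Corollary 1.6 in \cite{FKP}) and reduce to showing $v_1+v_{\ell_n-1}=n$, which you verify from Theorem \ref{thm coeff} exactly as the paper does. Your additional checks of the FKP hypotheses (reduced alternating diagram, hyperbolicity for $n\ge 2$, vacuity for $n=1$) are welcome details that the paper omits.
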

\begin{proof}
According to  Corollary 1.6 in \cite{FKP}, we only need to show that $v_1+v_{\ell_n-1}=n$. This follows immediately from Theorem \ref{thm coeff}. 
\end{proof}

\subsection{Examples}

\subsubsection{Hopf link}
$C[2] $  is the Hopf link. It has two components and the Jones polynomial depends on the orientation as explained in section \ref{sect Jones}. We have 
\[V_{[-2]}=-t^{-1/2}-t^{-5/2}= -t^{-1/2}(1+t^{-2}) \qquad\textup{ or }\qquad V_{[2]}= -t^{5/2}-t^{1/2} =-t^{5/2}(1+t^{-2}).\]
In both cases, the quotient of the  Jones polynomial  by its leading term is equal to \[F_2 = [3]_q -q =1+q^2=1+t^{-2}\qquad\textup{ and }\qquad F_{-2}=t^{-2}\,\overline{F_2}=1+t^{-2}.\]

\subsubsection{Trefoil}
 $C[3]=C[-2,2]$ is the trefoil knot, and we have 
\[V_{[-2,2]}= t^{-1}+t^{-3}-t^{-4} =t^{-1} (1+t^{-2}-t^{-3}) = t^{-1} F_{-2,2},\] 
where the last identity follows from Example \ref{ex 6.4}. 

\subsubsection{4 crossings}If the number of crossings is 4 we have either the figure 8 knot $C[2,2]$ or the link $C[4]$. In the first case, we have
\[V_{[2,2]}= t^2-t+1-t^{-1}+t^{-2}= t^2(1-t^{-1}+t^{-2}-t^{-3}+t^{-4}),\]
whereas
\[F_{2,2}=q^4 N\big[\,[3]_q-q,[2]_q\,q^{-4}\big] =  q^4 ((1+q^2)(1+q)q^{-4}+1) =1+q+q^2+q^3+q^4.
\]

On the other hand, for $C[4]$, we have seen in Examples \ref{ex 4} and \ref{ex 6.5} that 
\[ V_{[4]}=-t^{9/2}-t^{5/2}+t^{3/2}-t^{1/2}=-t^{9/2}(1 +t^{-2}-t^{-3}+t^{-4})=-t^{9/2}F_4,\] 
and
\[ V_{[-4]}=\overline{ V_{[4]}}=-\overline{t^{9/2}}\ \overline{(1 +t^{-2}-t^{-3}+t^{-4})}=-\overline{t^{9/2}}\ \overline{F_4}=-t^{-1/2}F_{-4}.\]

\subsubsection{Large examples}
We present two large examples to demonstrate the efficiency of the continued fraction formula. For the continued fractions we shall use the Euler-Minding formula, see \cite[p.9]{Perron}, which
states that $N\cfa$ is equal to
\[a_1a_2\ldots a_n \left(1 + \sum_{i=1}^{n-1} \frac{1}{a_ia_{i+1}}   + \sum_{i<k-1}^{n-2} \frac{1}{a_ia_{i+1}} \frac{1}{a_ka_{k+1}}   + \sum_{i<k-1<\ell-2}^{n-2} \frac{1}{a_ia_{i+1} }\frac{1}{a_ka_{k+1} }\frac{1}{a_\ell a_{\ell+1}}+\cdots\right)  .\]
Thus the first term is the product of all entries of the continued fraction, then in the first sum we remove all possible consecutive pairs from this term, in the second sum we remove two such pairs and so on. 
For example, $N[a,b,c,d,e]=abcde+cde+ade+abe+abc+e+c+a$.

\begin{itemize}
\item[(i)]
{$C[3,2,4]$} To compute the Jones polynomial of $C[3,2,4]$, a knot with 9 crossings, we need to calculate $F_{3,2,4}$ which is given by the numerator of the continued fraction
$[\,[4]_q-q, [2]_q\,q^{-5}, [4]_q\,q^6\,]$, and by the Euler-Minding formula this is equal to 
\[
\begin{array}
 {rcl}
 &([4]_q-q)  [2]_q\,q^{-5} [4]_q\,q^6 + ([4]_q-q)+ [4]_q\,q^6\\ [5pt]
 =&(1+q^2+q^3)(1+q)(1+q+q^2+q^3)q+(1+q^2+q^3)+ (1+q+q^2+q^3)q^6 \\[5pt]
 =&q^9 + 2 q^8 + 4 q^7 + 5 q^6 + 5 q^5 + 5 q^4 + 4 q^3 + 3 q^2 + q + 1
\end{array}
\]
Thus the Jones polynomial is 
\[V_{[3,2,4]}= {\pm} \,t^j (-t^{-9} + 2 t^{-8} - 4 t^{-7} + 5 t^{-6} - 5 t^{-5} + 5 t^{-4} - 4 t^{-3} + 3 t^{-2} - t^{-1} + 1).\]

\item[(ii)]{$C[2,3,4,5,6]$} This is a 2 component link with 20 crossings. The continued fraction has value $[2,3,4,5,6]=972/421 $.
Again using the Euler-Minding formula we see that $F_{2,3,4,5,6}$ is equal to 
\[
\begin{array}
 {ll}
 &([3]_q-q)  \,[3]_q\,q^{-5}\, [4]_q\,q^6 \,[5]_q\,q^{-14} \,[6]_q\,q^{15} 
 + [4]_q\,q^6 \,[5]_q\,q^{-14} \,[6]_q\,q^{15} +([3]_q-q) \,[5]_q\,q^{-14} \,[6]_q\,q^{15} \\ [5pt]
 &+([3]_q-q)  \,[3]_q\,q^{-5}\,[6]_q\,q^{15}+([3]_q-q)  \,[3]_q\,q^{-5}\, [4]_q\,q^6 
+[6]_q\,q^{15} + [4]_q\,q^6+([3]_q-q)\\
\\
=&q^2 
(1+q^2)  
(1+q+q^2)
(1+q+q^2+q^3)
(1+q+q^2+q^3+q^4)
(1+q+q^2+q^3+q^4+q^5) \\[5pt]
&+q^7
(1+q+q^2+q^3)
(1+q+q^2+q^3+q^4)
(1+q+q^2+q^3+q^4+q^5)
\\[5pt]
& +q
(1+q^2)  
(1+q+q^2+q^3+q^4)
(1+q+q^2+q^3+q^4+q^5)\\[5pt]
&+q^{10} 
(1+q^2)  
(1+q+q^2)
(1+q+q^2+q^3+q^4+q^5)\\[5pt]
&+q 
(1+q^2)  
(1+q+q^2)
(1+q+q^2+q^3)
\\[5pt]
&+q^{15}
(1+q+q^2+q^3+q^4+q^5) 
+q^6(1+q+q^2+q^3)
+(1+q^2)  
\\ \\
=&  t^{-20} - 3t^{-19} + 7t^{-18} - 15t^{-17} + 27t^{-16} - 44t^{-15} + 63t^{-14} 
- 83t^{-13} + 101t^{-12 }- 111t^{-11} \\&+ 113t^{-10} - 106t^{-9} + 92t^{-8} - 
73t^{-7} + 54t^{-6} - 36t^{-5} + 22t^{-4} - 12t^{-3} + 6t^{-2} - 2t^{-1}+1.
\end{array}
\]

\end{itemize}

{}
 \end{document}